\definecolor{refkeybis}{gray}{.65}
\definecolor{labelkeybis}{gray}{.65}
{\makeatletter
\def\SK@refcolor{\color{refkeybis}}%
\def\SK@labelcolor{\color{labelkeybis}}}
\numberwithin{equation}{section} 
\newtheorem{theorem}{Theorem}[section]
\newtheorem{lemma}[theorem]{Lemma}
\newtheorem{definition}[theorem]{Definition}
\newtheorem{remark}[theorem]{Remark}
\newtheorem{proposition}[theorem]{Proposition}
\newtheorem{corollary}[theorem]{Corollary}
\newcommand{\q}{q}
\newcommand{\Q}{Q}
\newcommand{\N}{\mathbf{N}}
\newcommand{\R}{\mathbf{R}}
\newcommand{\Haus}[1]{{\mathscr H}^{#1}} 
\newcommand{\Leb}[1]{{\mathscr L}^{#1}} 
\newcommand{\KK}{\mathscr{K}}
\newcommand{\x}{\times}
\renewcommand{\>}{\rangle}
\renewcommand{\a}{\alpha}
\newcommand{\cl}{\overline}
\renewcommand{\d}{\delta}
\newcommand{\e}{\varepsilon}
\renewcommand{\l}{\lambda}
\renewcommand{\i}{\infty}
\newcommand{\p}{\partial}
\newcommand{\supp}{\operatorname{spt}}
\newcommand{\intr}{\operatorname{int}}
\newcommand{\dist}{\operatorname{dist}}
\newcommand{\diam}{\operatorname{diam}}
\newcommand{\co}{\operatorname{co}}
\newcommand\Bzero{{\bf (B0)}}
\newcommand\Bone{{\bf (B1)}}
\newcommand\Btwo{{\bf (B2)}}
\newcommand\Btwos{{\bf (B2)$_{\bf s}$}}
\newcommand\Bthree{{\bf (B3)}}
\newcommand\Bthrees{{\bf (B3)}$_{\bf s}$}
\newcommand\Bfour{{\bf (B4)}}
\newcommand\Athree{{\bf (A3)}}
\newcommand\Athreew{{\bf (A3)}$_{\bf w}$}
\newcommand\cross{\hbox{\rm cross}}
\newcommand\dom{\hbox{\rm dom}\thinspace}
\newcommand\zero{{\mathbf 0}}
\newcommand\U{U}
\newcommand\V{V}
\newcommand\uu{u}
\newcommand\tu{{\tilde u}}
\newcommand\vv{v}
\newcommand\bo{{b^0}}
\newcommand\tb{{\tilde b}}
\newcommand\cs{{c^*}}
\newcommand\tc{{\tilde c}}
\newcommand\bq{{\bar q}}
\newcommand\tq{{\tilde q}}
\newcommand\qo{q^0}
\newcommand\bs{{\bar s}}
\newcommand\bx{{\bar x}}
\newcommand\xo{{x_0}}
\newcommand\by{{\bar y}}
\newcommand\tx{{\tilde x}}
\newcommand\ty{{\tilde y}}
\newcommand\normal{{\hat n}}
\newcommand\DASM{{\bf DASM}}
\newcommand\red{\color{black}}
\newcommand\blue{\color{black}}
\newcommand\green{\color{black}}
\newcommand\RJM{\color{black}}
\newcommand\YHK{\color{black}}
\newcommand\RM{\color{black}}
\newcommand\rdot{{\RJM}}
\renewcommand{\nabla}{D}
\title{H\"older
continuity and injectivity of optimal maps\thanks{{\RJM An earlier
draft of this paper was circulated under the title {\em Continuity
and injectivity of optimal maps for non-negatively cross-curved
costs} at {\tt arxiv.org/abs/0911.3952}}
The authors are 
grateful to
the Institute for Pure and Applied Mathematics at UCLA,
the Institut Fourier at Grenoble, and the Fields Institute in Toronto,
for their generous hospitality during various stages of this work.
A.F. is supported {\blue in part} by NSF grant DMS-0969962.
Y-H.K. is supported partly by 
NSF grant DMS-0635607
through the membership at Institute for Advanced Study at Princeton NJ, and also in part by NSERC grant 371642-09.
{\blue R.J.M.}  is supported in part by
NSERC grants 217006-03 and -08 and 
NSF grant DMS-0354729.
Any opinions, findings
and conclusions or recommendations expressed in this material are those of authors and do not
 reflect the views of either the Natural Sciences and Engineering
Research Council of Canada (NSERC) or the United States National Science Foundation (NSF).
\copyright 2011 by the authors.
}}
\date{\today}
\author{Alessio Figalli\thanks{Department of Mathematics, University of Texas at Austin, Austin TX USA {\tt figalli@math.utexas.edu}},
Young-Heon Kim\thanks{ Department of Mathematics, University of
British Columbia, Vancouver BC Canada {\tt yhkim@math.ubc.ca} and
School of Mathematics, Institute for Advanced Study at Princeton
NJ USA} \ and Robert J. McCann\thanks{Department of Mathematics,
University of Toronto, Toronto Ontario Canada M5S 2E4 {\tt
mccann@math.toronto.edu}}}
\begin{document}

\maketitle

\begin{abstract}
Consider transportation of one distribution of mass onto another,
chosen to optimize the total expected cost, where cost per unit mass
transported from $x$ to $y$ is given by a smooth function $c(x,y)$.
If the source density $f^+(x)$ is bounded away from zero and
infinity in an open region $\U' \subset \R^n$,  and the target
density $f^-(y)$ is bounded away from zero and infinity on its
support $\cl \V \subset \R^n$, which is strongly $c$-convex with
respect to $\U'$, and the transportation cost $c$ satisfies the
\Athreew\ condition of Trudinger and Wang \cite{TrudingerWang07p},
we deduce local H\"older continuity and injectivity of the optimal
map inside $\U'$ (so that the associated potential $u$ belongs to
$C^{1,\alpha}_{loc}(\U')$). Here the exponent $\alpha>0$ depends only on the dimension
and the bounds on the densities, but not on $c$.
%
%
%
{\blue Our} result provides a crucial step in the low/interior
regularity setting: in a {\RJM sequel}~\cite{FigalliKimMcCann09p},
we use it to establish regularity of optimal maps with respect to
the Riemannian distance squared on arbitrary products of spheres.
{\RJM Three \blue key tools are introduced in {\RJM the present}
paper. Namely, we first find a transformation that under \Athreew \
make{\RJM s} $c$-convex functions level-set convex ({\RJM as} was
also {\RJM obtained} independently from us by Liu \cite{Liu09}). We
then derive new Alexandrov type estimates for the level-set convex
$c$-convex functions, and a topological lemma showing optimal maps
do not mix interior with boundary. This topological lemma, which
does not require \Athreew, is needed by} Figalli and Loeper \cite{FigalliLoeper08p} to
conclude {\RJM continuity of optimal maps in {\blue two dimensions}. }
%
%
In higher dimensions, if the densities $f^\pm$ are H\"older
continuous,  our result permits
continuous differentiability of the map inside $U'$ (in fact, $C^{2,\alpha}_{loc}$ regularity of the associated
potential) to be deduced from the work of Liu, Trudinger and
Wang~\cite{LiuTrudingerWang09p}.
\end{abstract}

\tableofcontents

\section{Introduction}

Given probability densities $0 \le f^\pm \in L^1(\R^n)$ with respect to Lebesgue measure
$\Leb{n}$ on $\R^n$,
and a cost function $c:\R^n \times \R^n \longmapsto [0,+\infty]$,
Monge's transportation problem is to find a map $G:\R^n \longmapsto \R^n$
pushing $d\mu^+ = f^+ d\Leb{n}$ forward to $d\mu^-= f^-d\Leb{n}$
which minimizes the expected transportation cost \cite{Monge81}
\begin{equation}\label{Monge}
\inf_{G_\#\mu^+ = \mu^-} \int_{\R^n} c(x,G(x)) d\mu^+(x),
\end{equation}
where $G_\# \mu^+ = \mu^-$ means $\mu^-[Y] = \mu^+[G^{-1}(Y)]$ for
each Borel $Y \subset \R^n$.

In this context it is interesting to know when a map attaining this infimum exists;
sufficient conditions for this were found by
Gangbo \cite{Gangbo95} and by Levin \cite{Levin99}, extending work
of a number of authors described in \cite{GangboMcCann96} \cite{Villani09}.
One may also ask when $G$ will be smooth, in which case it must satisfy the prescribed
Jacobian equation $|\det DG(x)|=f^+(x)/f^-(G(x))$,
which turns out to reduce to a degenerate elliptic
partial differential equation of Monge-Amp\`ere type 
for a scalar potential $\uu$ satisfying $D\uu(\tx)=-D_x c(\tx,G(\tx))$.
Sufficient conditions for this were discovered by
Ma, Trudinger and Wang \cite{MaTrudingerWang05} and
Trudinger and Wang \cite{TrudingerWang07p} \cite{TrudingerWang08p},
after results for the special case $c(x,y)= |x-y|^2/2$ had been
worked out by Brenier \cite{Brenier91}, Delan\"oe \cite{Delanoe91},
Caffarelli \cite{Caffarelli90p} \cite{Caffarelli90} \cite{Caffarelli91} \cite{Caffarelli92} 
\cite{Caffarelli96b}, and Urbas \cite{Urbas97},  and for the
cost $c(x,y)=-\log |x-y|$ and measures supported on the unit sphere by
Wang~\cite{Wang96}. 

If the ratio $f^+(x)/f^-(y)$ --- although bounded away from zero
and infinity --- is not continuous, the map $G$ will not generally
be differentiable, though one may still hope for it to be
continuous. This question is not merely of {\RJM technical interest},
since discontinuities in $f^\pm$ arise unavoidably in applications
such as partial transport problems \cite{CaffarelliMcCann99}
\cite{BarrettPrigozhin09} \cite{FigalliARMA} \cite{FigalliNote}.
Such results were established for the classical cost
$c(x,y)=|x-y|^2/2$ by Caffarelli \cite{Caffarelli90}
\cite{Caffarelli91} \cite{Caffarelli92}, for its restriction to
the product of the boundaries of two strongly convex sets by
Gangbo and McCann \cite{GangboMcCann00}, and for more general
costs satisfying the strong regularity hypothesis \Athree\ of
Ma, Trudinger and Wang \cite{MaTrudingerWang05}
--- which excludes the cost $c(x,y)=|x-y|^2/2$ --- by Loeper
\cite{Loeper07p}; see also \cite{KimMcCannAppendices} \cite{Liu09}
\cite{TrudingerWang08p}. Under the weaker
hypothesis \Athreew\ of Trudinger and Wang
\cite{TrudingerWang07p}, which includes the cost
$c(x,y)=|x-y|^2/2$ (and whose necessity for regularity was shown
by Loeper \cite{Loeper07p}, see also \cite{FRV-reg}), such a result remained absent from the
literature; the aim of this paper is to fill this gap, see
Theorem~\ref{T:Hoelder} below.

{ A number of interesting cost functions do
satisfy hypothesis \Athreew, and have
applications in economics \cite{FigalliKimMcCann-econ09p} and
statistics \cite{Sei09p}. Examples include the Euclidean distance {\blue squared}
between two convex graphs over two sufficiently convex sets in
$\R^n$ \cite{MaTrudingerWang05}, the simple harmonic oscillator action \cite{LeeMcCann},
and the Riemannian distance {\blue squared} on the following spaces:
{\RJM the round sphere \cite{Loepersphere}
and perturbations thereof} \cite{DelanoeGe} \cite{FigalliRifford08p}\cite{FRV-Sn},
multiple products of round spheres (and their Riemannian submersion quotients, including products of
complex projective spaces) \cite{KimMcCann08p}, {\blue and products of perturbed $2$-dimensional spheres \cite{DelanoeGe10}}.
{\RJM As remarked in \cite{KimMcCann07p}, for graphs which fail to be strongly convex and
all Riemannian product geometries,
the stronger condition \Athree\ necessarily fails.}
In a sequel,  we apply the techniques developed here to deduce
regularity of optimal maps {\blue for the multiple products of round spheres}
\cite{FigalliKimMcCann09p}. Moreover, Theorem~\ref{T:Hoelder}
allows one to apply the higher interior regularity results established
by Liu, Trudinger and Wang \cite{LiuTrudingerWang09p},
ensuring in particular that the transport map is $C^\infty$-smooth
 if $f^+$ and $f^-$ are.
\\}

Most of the regularity results quoted above derive from one of two
approaches. The continuity method, used by Delano\"e, Urbas, Ma,
Trudinger and Wang,  is a time-honored technique for solving
nonlinear equations.  Here one perturbs a manifestly soluble
problem (such as $|\det DG_0(x)|=f^+(x)/f_0(G_0(x))$ with $f_0=f^+$, so that $G_0(x)=x$) to the problem of interest
($|\det DG_1(x)|=f^+(x)/f_1(G_1(x))$, $f_1=f^-$)
along a family $\{f_t\}_t$ designed to ensure the set of $t
\in[0,1]$ for which it is soluble is both open and closed.
Openness follows from linearization and non-degenerate ellipticity
using an implicit function theorem. For the non-degenerate ellipticity and closedness,
it is required to establish estimates on the size of derivatives of the solutions
(assuming such solutions exist) which depend only on information
known a priori about the data $(c,f_t)$.  In this way one obtains
smoothness of the solution $y=G_1(x)$ from the same argument which
shows $G_1$ to exist.

The alternative approach relies on first knowing existence and
uniqueness of a Borel map which solves the problem in great
generality,  and then deducing continuity or smoothness by close
examination of this map after imposing additional conditions on
the data $(c,f^\pm)$. Although precursors can be traced back to
Alexandrov \cite{Aleksandrov42b}, in the present context this
method was largely developed and refined by Caffarelli
\cite{Caffarelli90} \cite{Caffarelli91} \cite{Caffarelli92}, who
used convexity of $\uu$ crucially to localize the map $G(x) =
D\uu(x)$ and renormalize its behaviour near a point $(\tx,G(\tx))$
of interest in the borderline case $c(x,y) = - \<x, y\>$. For
non-borderline \Athree\ costs, simpler estimates suffice to
deduce continuity of $G$, as in \cite{GangboMcCann00}
\cite{CafGutHua} \cite{Loeper07p} \cite{TrudingerWang08p}; in this
case Loeper was actually able to deduce an explicit bound $\alpha
= (4n-1)^{-1}$ on the H\"older exponent of $G$ when $n>1$. {\RJM This} bound
was recently improved to its sharp value $\alpha = (2n-1)^{-1}$ by
Liu \cite{Liu09}, using {\red a key observation discovered independently from us (see Section~\ref{S:transform} and  Theorem~\ref{thm:apparently convex})};
both Loeper and Liu
also obtained explicit exponents $\alpha=\alpha(n,p)$ for
 $f^+ \in L^p$ with $p>n$ \cite{Loeper07p} or $p> (n+1)/2$ \cite{Liu09} and $1/f^- \in L^\infty$.
 For the classical case $c(x,y) = - \<x, y\>$,
 explicit bounds were found by Forzani and Maldonado \cite{ForzaniMaldonado04},
{\YHK depending on}
 $\log \frac{f^+(x)}{f^-(y)} \in L^\infty$ \cite{Caffarelli92} \cite{Wangcounterex}.\\

{\RJM Our proof introduces at least three significant new tools. Its starting point
is that} {\blue condition \Athreew \ allows one to add a null Lagrangian term to the cost function
and exploit diffeomorphism (i.e.\ gauge) invariance to choose coordinates, which depend on the
point of interest, to  transform the $c$-convex functions to level-set convex functions
(see Theorem~\ref{thm:apparently convex}); this observation was made also by Liu \cite{Liu09},
independently from us.  {\RJM Next we establish Alexandrov} type estimates
(Theorems~\ref{thm:lower Alex} and \ref{thm:estimate}) for $c$-convex functions
{\RJM whose level sets are convex,
 extending classical estimates} for convex functions.
{\RJM These 
rely
 on quantitative new aspects of the geometry of convex sets
 that we derive elsewhere, and which may have independent interest \cite{FKM-convex}.}
The {\RJM resulting Alexandrov type estimates} enable us to exploit
Caffarelli's approach \cite{Caffarelli90} \cite{Caffarelli91}
\cite{Caffarelli92} more systematically than Liu \cite{Liu09} was
able to do, to prove continuity and injectivity (see
Theorems~\ref{T:strict convex} and  \ref{T:continuity}). Once such
results are established, {\RJM the same estimates permit} us to
exploit {\RJM Forzani and Maldonado's \cite{ForzaniMaldonado04}}
approach to extend the engulfing property of Gutierrez and Huang
\cite{guthua} to \Athreew \ $c$-convex functions (see
Theorem~\ref{T:engulfing}), improving {\RJM mere} continuity of
optimal maps to H\"older continuity (Theorem~\ref{T:C1alpha} and its
Corollary~\ref{C:universal Holder exponent}).}
{\blue  Along the way, we also have to overcome another serious difficulty, namely the fact that the domain of the cost function}
(where it is smooth and satisfies appropriate cross-curvature
conditions) may not be the whole of $\R^n$. (This situation
arises, for example, when optimal transportation occurs between
domains in Riemannian manifolds for the distance squared cost or
similar type.)
{\blue This is handled by using} Theorem~\ref{thm:bdry-inter}, where
it is first established that optimal transport does not send
interior points to boundary points, and vice versa, under the strong
$c$-convexity hypothesis \Btwos\ described in the next section.
{\RJM Theorem~\ref{thm:bdry-inter} does not require \Athreew,
however.}
Let us point out that, in two dimensions, there is an alternate
approach to establishing continuity of optimal maps;  it was carried out
by Figalli and
Loeper \cite{FigalliLoeper08p} {\blue following Alexandrov's strategy \cite{Aleksandrov42b},
but their result relies on {\RJM our} Theorem~\ref{thm:bdry-inter}.}



\section{Main result}\label{S:main-result}

Let us begin by formulating the relevant hypothesis on the cost
function $c(x,y)$ in a slightly different format than Ma,
Trudinger and Wang \cite{MaTrudingerWang05} \cite{TrudingerWang07p}.
We denote their condition \Athreew\ as \Bthree\ below,  and their stronger condition
 \Athree\ as \Bthrees.
Throughout the paper, $D_y$ will denote the derivative with respect to the variable $y$,
and iterated subscripts as in $D^2_{xy}$ denote iterated derivatives.
For each $(\tx,\ty)
\in \cl \U \times \cl \V$
{\YHK we define the following conditions}:\\

\noindent
\Bzero\  $\U \subset \R^n$ and $\V \subset \R^n$ are open and bounded and
$c \in C^4\big(\cl \U \times \cl \V\big)$; \\
\Bone\   (bi-twist)
$\left.\begin{array}{c}
        x \in \cl \U \longmapsto -D_y c(x,\ty) 
\cr     y \in \cl \V \longmapsto -D_x c(\tx,y) 
        \end{array}\right\}$ are diffeomorphisms onto their ranges; \\
\Btwo\  (bi-convex)
$\left.\begin{array}{c}
        \U_{\ty} := -D_y c(\U, \ty) \cr
        \V_{\tx} := -D_x c(\tx, \V)
        \end{array}\right\}$ are convex subsets of $\R^n$; \\
{\RJM \Bthree\ (=\Athreew)}\
for every curve
$t \in[-1,1] \longmapsto \big(D_y c(x(t),y(0)),D_x c(x(0),y(t))\big) \in \R^{2n}$
which is an affinely parameterized line segment,
\begin{equation}\label{MTW}
\cross_{(x(0),y(0))} [x'(0),y'(0)] :=
-\frac{\partial^4}{\partial s^2 \partial t^2}\bigg|_{(s,t)=(0,0)} c(x(s),y(t)) \ge 0
\end{equation}
provided
\begin{equation}\label{nullity}
\frac{\partial^2}{\partial s \partial t}\bigg|_{(s,t)=(0,0)} c(x(s),y(t)) = 0.
\end{equation}
\\

From time to time we may strengthen these hypotheses by writing either:\\

\noindent
{\YHK \Btwos\ if the convex domains $\U_\ty$ and $\V_\tx$ in \Btwo\ are
strongly convex;}\\
{\YHK \Bthrees\ (=\Athree)\ if, in the condition \Bthree, the inequality \eqref{MTW} is strict;\\
\Bfour\ if, in the condition \Bthree,  \eqref{MTW} holds even in the absence of} the extra assumption \eqref{nullity}.
\\

Here a convex set $Q
\subset \R^n$ is said to be {\em strongly} convex if there exists
a radius $R<+\infty$ (depending only on $Q${\RJM ),}
 such that each
boundary point $\tx \in
\partial Q$ can be touched from outside by a sphere of radius $R$
enclosing $Q$; i.e. $Q \subset B_R\left(\tx - R \normal_{Q}(\tx)\right)$
where $\normal_Q(\tx)$ is an outer unit normal to a hyperplane
supporting $Q$ at $\tx$.  When $Q$ is smooth, this means all
principal curvatures of its boundary are bounded below by $1/R$.

Hereafter $\cl \U$ denotes the closure of $\U$, $\intr U$ denotes
its interior, $\diam U$ its diameter, and for any measure $\mu^+
\ge 0$ on $\cl \U$, we use the term {\em support} and the notation
$\supp \mu^+ \subset \cl\U$ to refer to the smallest closed set
carrying the full mass of $\mu^+$.

Condition \Athreew\ {\blue ($=$\Bthree) was used by Trudinger and Wang to  show} smoothness
of optimal maps in the Monge transportation problem (\ref{Monge}) when the densities are smooth.
Necessity of Trudinger and Wang's condition for continuity was shown by
Loeper \cite{Loeper07p}, who
noted its covariance (as did \cite{KimMcCann07p} \cite{Trudinger06})
and some relations to curvature.  Their condition relaxes the hypothesis \Athree\
($=$\Bthrees) proposed
earlier with Ma \cite{MaTrudingerWang05}.
{\blue In \cite{KimMcCann07p}, Kim and McCann showed that
the expressions \eqref{MTW} and \eqref{nullity}}
correspond to pseudo-Riemannian sectional curvature conditions induced by the cost $c$
on $\U \times \V$,  highlighting their invariance under reparametrization
of either $\U$ or $\V$ by diffeomorphism; see \cite[Lemma 4.5]{KimMcCann07p}; {\blue  see also \cite{KimMcCannWarren09} as well as \cite{HarveyLawson-Split}, for further investigation of the pseudo-Riemannian aspects of optimal maps.}
The convexity of $\U_\ty$ required in \Btwo\  is called {\em
$c$-convexity of $U$ with respect to $\ty$} by Ma, Trudinger and
Wang (or strong $c$-convexity if \Btwos\ holds); they call curves
$x(s) \in \U$, for which $s \in [0,1] \longmapsto -D_y
c(x(s),\ty)$ is a line segment, {\em $c$-segments with respect to
$\ty$}. Similarly, $\V$ is said to be strongly $\cs$-convex with
respect to $\tx$
--- or with respect to $\cl\U$ when it holds for all $\tx \in \cl\U$ ---
and the curve $y(t)$ from \eqref{MTW} is said to be a
$\cs$-segment with respect to $\tx$.
Such curves correspond to geodesics $(x(t),\ty)$ and $(\tx, y(t))$
in the geometry of Kim and McCann. Here and throughout,  {\em line segments}
are always presumed to be affinely parameterized.

We are now in a position to summarize our main result:

\begin{theorem}[\blue Interior H\"older continuity and injectivity of optimal maps]
\label{T:Hoelder}
Let $c \in C^4 \big(\cl \U \times \cl \V\big)$ satisfy \Bzero--\Bthree\ and \Btwos.
Fix probability densities $f^+ \in L^1\big(\U\big)$ and $f^- \in L^1\big(\V\big)$
with $(f^+/f^-) \in L^\infty\big(\U \x \V\big)$ and set $d\mu^\pm
:=f^\pm d\Leb{n}$. {\blue (Note that $\mathop{\rm spt} \mu^+$ may not be $c$-convex.)} If the ratio $(f^-/f^+) \in
L^\infty(\U' \x \V)$ for some open set $\U' \subset \U$ {\blue ($U'$ is not necessarily $c$-convex)},  then
the minimum (\ref{Monge}) is attained by a
map $G:\cl \U \longmapsto \cl \V$ whose restriction to $\U'$ is locally
H\"older continuous and one-to-one. {\blue Moreover, the H\"older exponent
depends only on $n$ and} {\RJM $\| \log (f^+/f^-)\|_{L^\infty(\U' \times \V)}$}.
\end{theorem}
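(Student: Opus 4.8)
The plan is to follow the classical Caffarelli program, adapted to $c$-convex potentials via the three new tools advertised in the introduction. I would proceed in four stages. First, existence and uniqueness: under \Bzero--\Btwo\ the cost is bi-twisted, so the standard Kantorovich duality together with the twist condition \Bone\ produces a unique optimal map $G(\tx) = c\text{-}\exp_{\tx}(Du(\tx))$, where $u$ is a $c$-convex potential solving $Du(\tx) = -D_xc(\tx,G(\tx))$ $\mu^+$-a.e., and $G$ pushes $\mu^+$ forward to $\mu^-$. This part is purely soft and uses only \Bzero, \Bone, and the integrability $(f^+/f^-)\in L^\infty$. The strong $c$-convexity hypothesis \Btwos\ enters immediately afterwards through Theorem~\ref{thm:bdry-inter}: it guarantees that $G$ does not mix interior with boundary, i.e.\ $G(\U')$ stays a positive distance from $\partial\V$ on compact subsets of $\U'$ (after using $(f^-/f^+)\in L^\infty(\U'\times\V)$ to ensure $\mu^-$ charges a genuine neighbourhood of $G(\tx_0)$ inside $\V$). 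This is what lets us work entirely in the smooth interior where the cross-curvature hypotheses hold.

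Second, reduction to a localized, renormalizable model. Fixing $\tx_0\in\U'$ and $\ty_0 = G(\tx_0)$, I would pass to the coordinates $-D_yc(\cdot,\ty_0)$ and $-D_xc(\tx_0,\cdot)$ on the $x$- and $y$-sides; by \Btwo\ these straighten the domains to convex sets, and by Theorem~\ref{thm:apparently convex} (the \Athreew\ transform) the $c$-convex potential $u$, after subtracting the appropriate null Lagrangian, becomes \emph{level-set convex} in these coordinates — its sections $\{u \le t\}$ (more precisely the relevant sub-level / contact sets) are convex. The Monge--Amp\`ere-type inequality satisfied by $u$ then reads, on a section $S$ containing $\tx_0$,
\begin{equation*}
\lambda \,\Leb{n}(G(S)) \le \Leb{n}(S) \le \Lambda\, \Leb{n}(G(S)),
\end{equation*}
with $\lambda,\Lambda$ controlled by $\|\log(f^+/f^-)\|_{L^\infty}$ — and one also needs the comparison of $S$ with the $c$-section of the dual potential, which is where level-set convexity is essential.

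Third, the geometric heart of the argument: the Alexandrov-type estimates of Theorems~\ref{thm:lower Alex} and~\ref{thm:estimate}, which bound the height of a level-set convex $c$-convex function on a section in terms of the volume of the section and the Monge--Amp\`ere measure, replacing John's-lemma normalization of convex functions (which is unavailable because sections of level-set convex functions need not behave like ellipsoids after affine renormalization — this is exactly the subtlety isolated in \cite{FKM-convex}). Using these, one runs Caffarelli's dichotomy: sections of the potential either shrink geometrically under renormalization, forcing strict $c$-convexity of $u$ (hence single-valuedness, i.e.\ continuity of $G$, via Theorem~\ref{T:strict convex} and Theorem~\ref{T:continuity}) and injectivity (the dual potential is strictly $c^*$-convex by the symmetric argument, so $G$ is one-to-one); or else the section contains a segment, which is excluded by the boundary-interior separation from Step~1 together with the Alexandrov lower bound. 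I would first extract continuity and injectivity on all of $\U'$ this way.

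Fourth, upgrading to H\"older continuity with an exponent depending only on $n$ and $\|\log(f^+/f^-)\|_{L^\infty(\U'\times\V)}$. Here I would follow the Forzani--Maldonado / Guti\'errez--Huang route advertised in the introduction: use the Alexandrov estimates to establish the \emph{engulfing property} for \Athreew\ $c$-convex functions (Theorem~\ref{T:engulfing}) — every section engulfed in a controlled dilation of any section it meets — and then deduce from engulfing, by the standard iteration, that sections shrink at a geometric rate under halving of the height, which converts to a H\"older modulus $u\in C^{1,\alpha}_{loc}(\U')$ and hence a H\"older modulus for $G = c\text{-}\exp(Du)$; the constant in the engulfing property, and thus $\alpha$, tracks only $\lambda/\Lambda$, i.e.\ $\|\log(f^+/f^-)\|_{L^\infty}$, and the dimension — crucially not the modulus of $c$, since the $C^4$ bounds on $c$ only affect the scale at which the model applies, not the rate. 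I expect the main obstacle to be Step~3: the Alexandrov estimates for level-set convex $c$-convex functions genuinely require the new convex-geometric input of \cite{FKM-convex}, because the usual affine invariance that makes convex-function Alexandrov estimates elementary is lost, and one must quantify how badly a level-set convex section can degenerate while still controlling its Monge--Amp\`ere mass from below.
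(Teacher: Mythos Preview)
Your proposal is correct and follows essentially the same approach as the paper: the paper's proof of Theorem~\ref{T:Hoelder} is a short wrapper that reduces to the $c$-Monge--Amp\`ere bounds via Lemma~\ref{L:cMA properties}(e) and then invokes Theorems~\ref{T:strict convex}, \ref{T:C1alpha}, and Corollary~\ref{C:universal Holder exponent}, whose proofs in turn use exactly the ingredients you list (the boundary/interior separation of Theorem~\ref{thm:bdry-inter}, the level-set convexity of Theorem~\ref{thm:apparently convex}, the Alexandrov-type estimates of Theorems~\ref{thm:lower Alex} and~\ref{thm:estimate}, and the engulfing property of Theorem~\ref{T:engulfing}). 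One minor point: injectivity comes directly from strict $c$-convexity of $u$ (Theorem~\ref{T:strict convex}) rather than from a dual argument on $u^{c^*}$, though the two are of course equivalent.
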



\begin{proof}
As recalled below in Section \ref{S:background}
(or see e.g.\ \cite{Villani09}) 
it is well-known by Kantorovich duality that the optimal joint
measure $\gamma \in \Gamma(\mu^+,\mu^-)$ from \eqref{Kantorovich}
vanishes outside the $c$-subdifferential \eqref{c-subdifferential}
of a potential $u=u^{c^* c}$ satisfying the $c$-convexity
hypothesis \eqref{c-transform}, and that the map $G:\cl\U
\longmapsto \cl\V$ which we seek is uniquely recovered from this
potential using the diffeomorphism \Bone\ to solve \eqref{implicit
G}. Thus the H\"older continuity claimed in Theorem \ref{T:Hoelder} is
equivalent to $\uu \in C^{1,\alpha}_{loc}(\U')$.

{Since $\mu^\pm$ do not charge the boundaries of $\U$ (or of
$\V$),} Lemma \ref{L:cMA properties}(e) shows the
$c$-Monge-Amp\`ere measure defined in \eqref{c-Monge-Ampere
measure} has density satisfying $|\p^c u| \le
\|f^+/f^-\|_{L^\infty(\U \times \V)}$ on $\cl \U$ and
$\|f^-/f^+\|^{-1}_{L^\infty(\U' \times \V)} \le |\p^c
u| \le \|f^+/f^-\|_{L^\infty(\U' \times \V)}$ on $\U'$. Thus $u
\in C^{1,\a}_{loc}(\U')$ according to Theorem \ref{T:C1alpha}. Injectivity
of $G$ follows from Theorem \ref{T:strict convex}, and the fact
that the graph of $G$ is contained in the set $\p^c u \subset \cl
\U \times \cl \V$ of \eqref{c-subdifferential}.
{\blue The dependency of the H\"older exponent $\alpha$ only on $n$ and $\| \log (f^+/f^-)\|_{L^\infty(\U' \times \V)}$ follows by Corollary~\ref{C:universal Holder exponent}.}
\end{proof}
{ Note that in case $f^+ \in C_c(U)$ is continuous and compactly supported,
 choosing $U' = U'_\e= \{ f^+ > \e \}$ for all $\e > 0$,  yields local H\"older
 continuity and injectivity of the optimal map $y = G(x)$ throughout $U'_0$.}

Theorem \ref{T:Hoelder} allows to extend the
higher interior regularity results established by Liu, Trudinger and
Wang in \cite{LiuTrudingerWang09p}, originally given for
\Athree\ costs, to {\RJM the
weaker and degenerate case \Athreew,
see \cite[Remark 4.1]{LiuTrudingerWang09p}.
 Note that these interior regularity results can be
applied to manifolds, after getting suitable stay-away-from-the-cut-locus
results: this is accomplished for multiple products of round spheres in
\cite{FigalliKimMcCann09p}, to yield the first regularity result that we
know for optimal maps on Riemannian manifolds which are not flat, yet
have some vanishing sectional curvatures.

Let us also point out that different
strengthenings of the \Athreew\  condition have been considered in
\cite{MaTrudingerWang05} \cite{KimMcCann07p} \cite{KimMcCann08p} \cite{LoeperVillani08p} \cite{FigalliRifford08p} \cite{FRV-Sn}.
In particular, one stronger condition is the so-called \textit{non-negative cross-curvature}, which
here is denoted here by
\Bfour.
Although not strictly needed for this paper, under the \Bfour\ condition
the cost exponential coordinates introduced in Section \ref{S:notation} allow to deduce stronger
conclusions with almost no extra effort,
and these results play a crucial role in the proof of the regularity of optimal maps on multiple products of spheres \cite{FigalliKimMcCann09p}.
For this reason, we prefer to include here some of the conclusions that one can deduce when \Athreew\
is replaced by \Bfour.}

\section{Background, notation, and preliminaries}
\label{S:background}

Kantorovich discerned \cite{Kantorovich42} \cite{Kantorovich48} that  Monge's problem
(\ref{Monge}) could be attacked by studying the linear programming problem
\begin{equation}\label{Kantorovich}
\min_{\gamma \in \Gamma(\mu^+,\mu^-)} \int_{\cl \U \times \cl \V}
c(x,y)\, d\gamma(x,y).
\end{equation}
Here $\Gamma(\mu^+,\mu^-)$ consists of the joint probability measures on
$\cl \U \times \cl \V \subset \R^n \times \R^n$ having $\mu^\pm$ for
marginals.  According to the duality theorem from linear programming,
the optimizing measures $\gamma$ vanish outside the zero set of
$\uu(x) + \vv(y) + c(x,y) \ge 0$ for some pair of functions
$(\uu,\vv) = (\vv^{c},\uu^\cs)$ satisfying
\begin{equation}\label{c-transform}
\vv^{c}(x) := \sup_{y \in \cl \V} -c(x,y) - \vv(y), \qquad
\uu^\cs(y) := \sup_{x \in \cl \U} -c(x,y) - \uu(x);
\end{equation}
these arise as optimizers of the dual program. This zero set is
called the $c$-subdifferential of $\uu$, and denoted by
\begin{equation}\label{c-subdifferential}
\partial^c \uu = \left\{(x,y) \in \cl \U \times \cl \V \mid \uu(x) + \uu^\cs(y) + c(x,y) = 0\right\};
\end{equation}
we also write $\partial^c \uu(x) := \{y \mid (x,y) \in \partial^c
\uu\}$, and $\partial^{\cs}\uu^\cs(y) := \{ x \mid (x,y) \in
\partial^c \uu\}$, and $\partial^c \uu(X) := \cup_{x \in
X} \partial^c \uu(x)$ for $X \subset \R^n$. Formula
(\ref{c-transform}) defines a generalized Legendre-Fenchel
transform called the {\em $c$-transform};  any function satisfying
$\uu = \uu^{\cs c} :=(\uu^\cs)^{c}$ is said to be {\em
$c$-convex}, which reduces to ordinary convexity in the case of
the cost $c(x,y) = - \<x, y\>$. In that case $\partial^c \uu$
reduces to the ordinary subdifferential $\partial \uu$ of the
convex function $\uu$,  but more generally we define
\begin{equation}\label{subdifferential}
\partial \uu
:= \{(x,p) \in \cl \U \times \R^n \mid \uu(\tx) \ge \uu(x) +
\langle p, \tx-x\rangle + o(|\tx-x|) {\rm\ as\ } \tx \to x\},
\end{equation}
$\partial\uu(x) := \{p \mid (x,p) \in \partial \uu\}$, and
$\partial \uu(X) := \cup_{x \in X} \partial \uu(x)$. Assuming $c
\in C^2\big(\cl \U \times \cl \V\big)$ (which is the case if
\Bzero\ holds),  any $c$-convex function $\uu=\uu^{c^*c}$ will be
semi-convex,  meaning its Hessian admits a bound from below $D^2
\uu \ge - \|c\|_{C^2}$ in the distributional sense;  equivalently,
$\uu(x) + \|c\|_{C^2}|x|^2/2$ is convex on each ball in $\U$
\cite{GangboMcCann96}.  In particular,  $\uu$ will be
twice-differentiable $\Leb{n}$-a.e. on $\U$ in the sense of
Alexandrov.

As in \cite{Gangbo95} \cite{Levin99} \cite{MaTrudingerWang05},
hypothesis \Bone\  shows the map $G:\dom D\uu \longmapsto \cl \V$ is uniquely
defined on the set $\dom D\uu \subset \cl\U$ of differentiability for $\uu$ by
\begin{equation}\label{implicit G}
D_x c(\tx,G(\tx)) = - D\uu(\tx).
\end{equation}
The graph of $G$,  so-defined,  lies in $\partial^c \uu$.
The task at hand is to show (local) H\"older continuity and injectivity of $G$ ---
the former being equivalent to $\uu \in C^{1,\alpha}_{loc}(\U)$ ---
by studying the relation $\partial^c \uu \subset \cl \U \times \cl \V$.

To this end,  we define a Borel measure $|\partial^c \uu|$ on $\R^n$ associated to
$\uu$ by
\begin{equation}\label{c-Monge-Ampere measure}
|\partial^c \uu|(X) := \Leb{n}(\partial^c \uu(X))
\end{equation}
for each {\blue Borel set} $X \subset \R^n$;
it will be called the  {\em $c$-Monge-Amp\`ere measure} of $\uu$.
{(Similarly, we define $|\partial u|$.)}
We use the notation $|\partial^c \uu| \ge  \lambda$ on $\U'$ as a shorthand
to indicate $|\partial^c \uu|(X) \ge \lambda \Leb{n}(X)$ for each $X \subset \U'$;
similarly, $|\partial^c \uu| \le \Lambda$ indicates
$|\partial^c \uu|(X) \le \Lambda \Leb{n}(X)$.
As the next lemma 
shows, uniform bounds above and below on the marginal densities
of a probability measure $\gamma$ vanishing outside $\partial^c \uu$
imply similar bounds on $|\partial ^c \uu|$.

\begin{lemma}[Properties of $c$-Monge-Amp\`ere measures]\label{L:cMA properties}
Let $c$ satisfy \Bzero-\Bone, while $u$ and $u_k$ denote
$c$-convex functions for each $k \in \N$. Fix $\tx \in \cl X$ and
constants $\lambda,\Lambda > 0$.
\\(a) Then
$\p^c u(\cl \U)\subset \cl \V$ and $|\partial^c u|$ is a
Borel measure of total mass $\Leb{n}\big(\cl V\big)$ on $\cl U$.
\\(b) If $u_k \to u_\infty$ uniformly,  then $u_\infty$ is $c$-convex and
$|\p^c u_k| \rightharpoonup |\p^c u_\infty|$ weakly-$*$ in
the duality against continuous functions on $\cl \U \times \cl \V$.
\\(c) If $u_k(\tx)=0$ for all $k$, then the functions $u_k$ converge uniformly if and only if
the measures $|\p^c u_k|$ converge weakly-$*$. 
\\
(d) If $|\p^c u| \le \Lambda$ on $\cl \U$, then
$|\partial^\cs u^\cs| \ge 1/\Lambda$ on $\cl \V$. \\
(e) If a probability measure $\gamma \ge 0$
vanishes outside $\p^c \uu \subset \cl\U \times \cl\V$,
and has marginal densities $f^\pm$, 
then $f^+ \ge \lambda$ on $\U' \subset \cl \U$ and
$f^- \le \Lambda$ on $\cl \V$ imply $|\p^c \uu| \ge \lambda/\Lambda$ on $\U'$,
whereas $f^+ \le \Lambda$ on $\U'$ and $f^- \ge \lambda$ on $\cl \V$ imply
$|\p^c \uu| \le \Lambda / \lambda$ on $\U'$.
\end{lemma}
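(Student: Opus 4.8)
The plan is to prove the five items more or less in the order stated, since each builds on the previous ones. For (a), I would first note that under \Bzero--\Bone\ the cost is $C^2$, so every $c$-convex $u$ is semi-convex and differentiable $\Leb{n}$-a.e.\ on $\U$; for each point of differentiability the first-order condition from \eqref{c-transform} gives a unique $y=G(\tx)\in\cl\V$ satisfying \eqref{implicit G}, while at a general $\tx\in\cl\U$ the set $\p^c u(\tx)$ is nonempty (a $\sup$ over the compact set $\cl\V$ is attained) and contained in $\cl\V$; hence $\p^c u(\cl\U)\subset\cl\V$. For the measure statement I would invoke the standard fact that $X\mapsto\Leb{n}(\p^cu(X))$ is a Borel measure: the key point is that the ``overlap'' $\{x:\#\p^cu(x)\ge 2\}$ has $\p^cu$-image of measure zero (it corresponds to non-differentiability points of the $c$-convex, hence semi-convex, dual potential $u^\cs$, a set of $\Leb{n}$-measure zero in $\cl\V$), so countable additivity follows from a Vitali-type covering / disjointness argument. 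Total mass $\Leb{n}(\cl\V)$ comes from $\p^cu(\cl\U)=\cl\V$ up to a null set, using that $u^\cs$ is also $c^*$-convex so every $y\in\cl\V$ lies in $\p^cu(x)$ for some $x$.

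For (b), uniform convergence $u_k\to u_\infty$ passes to the $c$-transforms (the $c$-transform is an infimal-type operation that is continuous under uniform convergence on compacts, with $\cl\U,\cl\V$ compact), so $u_\infty=u_\infty^{\cs c}$ is $c$-convex. Weak-$*$ convergence of $|\p^cu_k|$ is then a Kuratowski upper-semicontinuity statement for $c$-subdifferentials: if $(x_k,y_k)\in\p^cu_k$ and $(x_k,y_k)\to(x,y)$ then $(x,y)\in\p^cu_\infty$, because the defining equality \eqref{c-subdifferential} passes to the limit. Combined with the uniform total mass from (a) this gives weak-$*$ convergence against $C(\cl\U\times\cl\V)$. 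For (c), one direction is (b); the converse uses a compactness/normalization argument — the normalization $u_k(\tx)=0$ together with the uniform semi-convexity bound $D^2u_k\ge-\|c\|_{C^2}$ and equi-Lipschitz bounds (from $\p^cu_k(\cl\U)\subset\cl\V$ bounded) forces precompactness in $C(\cl\U)$; any two subsequential limits have the same $c$-Monge-Amp\`ere measure by (b) and agree at $\tx$, and I would argue such a limit is determined by its $c$-MA measure up to additive constant (uniqueness in the second boundary value / Aleksandrov problem), pinning down the full limit.

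For (d), if $|\p^cu|\le\Lambda$ on $\cl\U$ then for any Borel $Y\subset\cl\V$, taking $X=\p^{\cs}u^\cs(Y)\subset\cl\U$ we have $Y\subset\p^cu(X)$ (up to a null set), hence $\Leb{n}(Y)\le\Leb{n}(\p^cu(X))=|\p^cu|(X)\le\Lambda\Leb{n}(X)=\Lambda\,|\p^{\cs}u^\cs|(Y)$, i.e.\ $|\p^{\cs}u^\cs|\ge1/\Lambda$; the technical point is to justify $Y\subset\p^cu(\p^{\cs}u^\cs(Y))$ modulo a set of measure zero, which again uses a.e.\ differentiability of $u$ and the reciprocity $(x,y)\in\p^cu\iff(y,x)\in\p^{\cs}u^\cs$. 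Finally (e): if $\gamma$ vanishes outside $\p^cu$ with marginals $f^\pm$, then for Borel $X\subset\U'$, on one hand $\gamma(X\times\cl\V)=\int_X f^+\,d\Leb{n}\ge\lambda\Leb{n}(X)$, and on the other $\gamma$ is concentrated on $\p^cu$ so $\gamma(X\times\cl\V)=\gamma(X\times\p^cu(X))\le\gamma(\cl\U\times\p^cu(X))=\int_{\p^cu(X)}f^-\,d\Leb{n}\le\Lambda\Leb{n}(\p^cu(X))=\Lambda\,|\p^cu|(X)$; chaining these gives $|\p^cu|(X)\ge(\lambda/\Lambda)\Leb{n}(X)$. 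The upper bound is symmetric, swapping the roles of the two marginals and of $u$ with $u^\cs$ and using (a) to keep track of where mass can go. I expect item (a) — specifically the countable additivity of $|\p^cu|$ and the null-set bookkeeping for the overlap region — to be the main obstacle, everything else being either soft (weak-$*$ limits, $c$-transform continuity) or a direct inclusion-and-measure estimate; I would lean on the semi-convexity of $c$-convex functions and the a.e.\ single-valuedness of $\p^cu$ and $\p^{\cs}u^\cs$ to handle it.
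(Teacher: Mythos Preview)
Your proposal is correct in spirit and hits all five items, but it takes a somewhat different organizing route from the paper. The paper's central observation, established in (a), is the pushforward representation $|\p^c u| = F_\#\bigl(\Leb{n}\lfloor_{\cl\V}\bigr)$, where $F:\dom D v \to \cl\U$ is the single-valued map defined by $D_y c(F(\ty),\ty) = -Dv(\ty)$ from the dual potential $v=u^\cs$. This immediately yields that $|\p^c u|$ is a Borel measure (it is a pushforward of one), and the total mass is $\Leb{n}(\cl\V)$ for free. The paper then reuses this representation everywhere: for (b), weak-$*$ convergence follows from $F_k \to F_\infty$ $\Leb{n}$-a.e.\ (since $Dv_k \to Dv_\infty$ a.e.\ by uniform semiconvexity) plus dominated convergence; for (e), it writes $\gamma = (F\times id)_\#\mu^-$ and the chain of inequalities becomes a one-liner; and (d) is then deduced from (e) by choosing the particular $\gamma = (F\times id)_\#\Leb{n}$.

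Your approach is the more classical Aleksandrov one: countable additivity via the null overlap set, weak-$*$ convergence via Kuratowski upper-semicontinuity of the graph plus total-mass conservation, and (d) via the direct inclusion $Y \subset \p^c u(\p^\cs u^\cs(Y))$. This all works, and your argument for (d) is arguably more elementary than the paper's detour through (e). The tradeoff is that your (b) is the sketchiest point --- graph closure plus equal total mass does imply weak-$*$ convergence, but making this precise for $|\p^c u_k|$ (as opposed to ordinary subdifferential measures) takes some care, whereas the paper's $F_k \to F_\infty$ argument is a clean two lines. Also, for the upper bound in (e) your ``symmetric'' argument implicitly needs that $\p^\cs u^\cs(y)$ is a singleton for $\Leb{n}$-a.e.\ $y$ to get $\gamma(\cl\U \times \p^c u(X)) \le \gamma(X \times \cl\V)$; you gesture at this with ``using (a)'', but the paper makes it explicit through $F$.
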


\begin{proof}
(a) The fact $\p^c u(\cl \U) \subset \cl\V$ is an immediate consequence of definition
(\ref{c-subdifferential}).
Since $c \in C^1(\cl \U \times \cl \V)$,
the $c$-transform $\vv=\uu^\cs:\cl V \longmapsto \R$ defined by (\ref{c-transform})
can be extended to a Lipschitz function on a neighbourhood of $\cl \V$,  hence
Rademacher's theorem asserts $\dom D \vv$ is a set of full Lebesgue measure in $\cl \V$.
Use \Bone\ to define the unique solution $F:\dom D\vv \longmapsto \cl \U$ to
$$
D_y c(F(\ty),\ty) = - D \vv(\ty).
$$
As in \cite{Gangbo95} \cite{Levin99}, the vanishing of 
$\uu(x) + \vv(y) + c(x,y) \ge 0$ implies $\partial^{\cs} \vv(\ty) = \{F(\ty)\}$,
at least for all points $\ty \in \dom D\vv$ where $\cl \V$ has Lebesgue density
greater than one half. For Borel $X \subset \R^n$,
this shows $\partial^c \uu(X)$ differs from the Borel set $F^{-1}(X)\cap \cl\V$ by
a $\Leb{n}$ negligible subset of $\cl \V$,  whence $|\partial^c \uu| = F_{\#}\big(\Leb{n} \lfloor_{\cl \V}\bigr)$
so claim (a) of the lemma is established.

(b) Let $\|u_k - u_\infty\|_{L^\infty(\cl U)} \to 0$. It is not
hard to deduce $c$-convexity of $u_\infty$, as in e.g.\ 
\cite{FigalliKimMcCann-econ09p}. Define $v_k = u_k^\cs$ and $F_k$
on $\dom D v_k \subset \cl\V$ as above, so that $|\p^c u_k| =
F_{k\#} \big(\Leb{n} \lfloor_{\cl \V}\bigr)$.  Moreover, $v_k \to
v_{\infty}$ in $L^{\infty}(V)$, where $v_{\infty}$ is the
$c^{*}$-dual to $u_{\infty}$. The uniform semiconvexity of $v_k$
(i.e. convexity of $v_k(y) + \frac{1}{2}\|c\|_{C^2}|y|^2$) ensures
pointwise convergence of $D v_k \to Dv_\infty$ $\Leb{n}$-a.e.\ on
$\cl \V$. From $D_y c(F_k(\ty),\ty) = - D \vv_k(\ty)$ we deduce
$F_k \to F_\infty$ $\Leb{n}$-a.e.\ on $\cl \V$. This is enough to
conclude $|\p^c u_k| \rightharpoonup |\p^c u_k|$,  by testing the
convergence against continuous functions and applying Lebesgue's
dominated convergence theorem.

(c) To prove the converse,  suppose $u_k$ is a sequence of
$c$-convex functions which vanish at $\tx$ and $|\p^c u_k|
\rightharpoonup \mu_\infty$ weakly-$*$.  Since the $u_k$ have
Lipschitz constants dominated by $\|c\|_{C^1}$ and $\cl U$ is
compact, any subsequence of the $u_k$ admits a convergent further
subsequence by the Ascoli-Arzel\`a Theorem. A priori,  the limit
$u_\infty$ might depend on the subsequences, but (b) guarantees
$|\p^c u_\infty| = \mu_\infty$,  after which \cite[Proposition
4.1]{Loeper07p} identifies $u_\infty$ uniquely in terms of
$\mu^+=\mu_\infty$ and $\mu^- = \Leb{n} \lfloor_{\cl \V}$, up to
an additive constant; this arbitrary additive constant is fixed by
the condition $u_\infty(\tx)=0$.  Thus the whole sequence $u_k$
converges uniformly.

(e) Now assume a finite measure $\gamma \ge 0$ vanishes outside
$\p^c \uu$ and has marginal densities $f^\pm$.
Then the second marginal $d\mu^- := f^- d\Leb{n}$ of $\gamma$ is
absolutely continuous with respect to Lebesgue and $\gamma$
vanishes outside the graph of $F:\overline \V \longmapsto \U$,
whence $\gamma = (F \times id)_\# \mu^-$ by e.g.\ \cite[Lemma
2.1]{AhmadKimMcCann09p}. (Here $id$ denotes the identity map,
restricted to the domain $\dom D\vv$ of definition of $F$.)
Recalling that $|\partial^c \uu| = F_{\#}\big(\Leb{n} \lfloor_{\cl
\V}\bigr)$ (see the proof of (a) above), for any Borel $X \subset
\U'$ we have
$$
\lambda|\p^c u|(X)
= \lambda\Leb{n}(F^{-1}(X))
\le \int_{F^{-1}(X)} f^-(y) d\Leb n (y)
= \int_X f^+(x) d\Leb n (x)
\le \Lambda \Leb{n}(X)
$$
whenever $\lambda \le f^-$ and $f^+ \le \Lambda$.  We can also
reverse the last four inequalities and interchange $\lambda$ with
$\Lambda$ to establish claim (e) of the lemma.

(d) The last point remaining follows from (e) by taking $\gamma =
(F \times id)_\#\Leb n$. Indeed
an upper bound $\lambda$ on $|\p^c u|=F_\# \Leb n$
throughout $\cl U$ and lower bound $1$ on $\Leb{n}$ translate into
a lower bound $1/\lambda$ on $|\p^\cs u^\cs|$,  since the
reflection $\gamma^*$ defined by $\gamma^*(Y \times X) := \gamma
(X \times Y)$ for each $X \times Y \subset U \times V$ vanishes
outside $\p^\cs u^\cs$ and has second marginal absolutely
continuous with respect to Lebesgue by the hypothesis $|\partial^c
u| \le \lambda$.
\end{proof}

\begin{remark}[Monge-Amp\`ere type equation]{\rm
Differentiating (\ref{implicit G}) formally with respect to $\tx$
and recalling  $|\det DG(\tx)| = f^+(\tx) /f^-(G(\tx))$  yields
the Monge-Amp\`ere type equation
\begin{equation}\label{Monge-Ampere type equation}
\frac{\det [D^2_{xx} \uu(\tx) + D^2_{xx} c(\tx,G(\tx))]}{ |\det D^2_{xy} c(\tx,G(\tx))|}
= \frac{f^+(\tx)}{f^-(G(\tx))}
\end{equation}
on $\U$, where $G(\tx)$ is given as a function of $\tx$ and $Du(\tx)$
by (\ref{implicit G}).
Degenerate ellipticity follows from the fact
that $y = G(x)$ produces equality in $\uu(x) + \uu^\cs(y) + c(x,y) \ge 0$.
A condition under which $c$-convex weak-$*$ solutions are known to
exist 
is given by
$$
\int_{\cl\U} f^+(x) d\Leb{n}(x) = \int_{\cl \V} f^-(y) d\Leb{n}(y). \\
$$
The boundary condition $\p^c \uu(\cl U) \subset \cl V$
which then guarantees $Du$ to be uniquely determined $f^+$-a.e.\ is built into
our definition of $c$-convexity {\blue of $\uu$}. In fact, \cite[Proposition 4.1]{Loeper07p}
shows $u$ to be uniquely determined up to additive constant if either $f^+>0$ or
$f^->0$ $\Leb{n}$-a.e. on its connected domain, $\U$ or $\V$.}
\end{remark}

A key result we shall exploit several times is a maximum principle
first deduced from Trudinger and Wang's work
\cite{TrudingerWang07p}  by Loeper; see
\cite[Theorem 3.2]{Loeper07p}. A simple and direct proof, and also an extension can be found in
\cite[Theorem 4.10]{KimMcCann07p}, where the principle was also
called `double-mountain above sliding-mountain' (\DASM).  Other
proofs and extensions appear in \cite{TrudingerWang08p} \cite{TrudingerWang08q}
\cite{Villani09} \cite{LoeperVillani08p} \cite{FigalliRifford08p}:

\begin{theorem}[Loeper's maximum principle `\DASM']\label{T:DASM}
Assume \Bzero--\Bthree\ and fix $x,\tx \in \cl \U$.
If $t\in[0,1] \longmapsto -D_x c(\tx,y(t))$ is a line segment then
$f(t) := -c(x,y(t)) + c(\tx,y(t)) \le \max\{f(0),f(1)\}$ for all $t \in [0,1]$.
\end{theorem}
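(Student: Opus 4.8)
The statement is a maximum principle for the function $f(t) := -c(x,y(t)) + c(\tx,y(t))$ along a $c^*$-segment $y(t)$ with respect to $\tx$, i.e.\ along a curve for which $t \mapsto -D_x c(\tx, y(t))$ is an affinely parametrized line segment. The goal is to show $f(t) \le \max\{f(0),f(1)\}$ on $[0,1]$. The plan is to argue by contradiction: suppose $f$ attains a value strictly larger than $\max\{f(0),f(1)\}$ somewhere in the open interval; then $f$ has an interior maximum at some $t_0 \in (0,1)$ with $f(t_0) > \max\{f(0),f(1)\}$, so in particular $f'(t_0) = 0$ and $f''(t_0) \le 0$ (after first arranging, by an approximation/perturbation argument if necessary, that the maximum is nondegenerate, or else working directly with the second-order condition in the viscosity sense). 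The heart of the matter is then to show that the \Athreew\ condition \Bthree\ forces $f''(t_0) > 0$ at such a critical point, which is the desired contradiction.

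\textbf{Key steps.} First I would set up cost-exponential-type coordinates: fix the target point $\ty := y(t_0)$ and use \Bone\ to introduce $q := -D_x c(\tx, y)$, so that the curve $y(t)$ becomes the affine segment $q(t)$ in $q$-coordinates, with $q(t_0)$ an interior point. Define $p(t) := -D_x c(x, y(t))$. The condition $f'(t_0) = 0$ reads $\frac{d}{dt}\big|_{t_0}\big[-c(x,y(t)) + c(\tx,y(t))\big] = 0$, which after differentiating says $\langle p(t_0) - q(t_0), \dot q \rangle = 0$ in the appropriate pairing — i.e.\ the ``nullity'' condition \eqref{nullity} holds for the pair of velocities. (This is exactly the hypothesis that activates \Bthree.) Then I would compute $f''(t_0)$: differentiating twice and using that $q(t)$ is affine (so the second derivative of $-D_x c(\tx, y(t))$ vanishes), the second derivative of the $c(\tx, y(t))$ term contributes a controlled quantity, and the cross-curvature quantity $\cross_{(x(0),y(0))}$ from \eqref{MTW} appears with the right sign. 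More precisely, one recognizes $f''(t_0)$ (or its relevant part) as $-\frac{\partial^4}{\partial s^2 \partial t^2} c(x(s),y(t))$ evaluated along a suitable variation where $x(s)$ is constant (so $x'(0) = 0$), reducing \eqref{MTW} to the statement that $f''(t_0) \ge 0$; to get the \emph{strict} inequality needed for the contradiction one perturbs: replace $x$ by a nearby point or add a small strictly convex correction, run the argument for the perturbed function whose interior maximum is strict, obtain $f''_\e(t_0) > 0$, contradiction, then pass to the limit $\e \to 0$ to recover the non-strict conclusion $f(t) \le \max\{f(0),f(1)\}$ for $f$ itself.

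\textbf{Main obstacle.} The delicate point is handling the degeneracy: \Bthree\ only gives $\cross \ge 0$, not $> 0$, so a naive second-derivative argument yields $f''(t_0) \ge 0$, which does \emph{not} immediately preclude an interior maximum. The standard device — and what I expect to be the technical crux — is a doubling/perturbation trick: either (i) translate the ``moving'' point $x$ slightly so that the strict inequality $f(t_0) > \max\{f(0),f(1)\}$ is preserved while the maximum becomes isolated and one can exploit that $f'' \ge 0$ on a whole neighborhood forces $f$ convex there, contradicting the interior max; or (ii) add $\e \phi(t)$ with $\phi$ strictly convex, vanishing at the endpoints, to make the perturbed function strictly subharmonic-like, locate its interior max, derive a strict contradiction, and let $\e \to 0$. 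One must check that the reparametrization of $y(t)$ by $t$ remains an affine $c^*$-segment under these perturbations (it does, since the $q$-coordinate only depends on $\tx$, which is held fixed in variant (ii)), and that the regularity \Bzero\ ($c \in C^4$) is enough to justify all fourth-order Taylor expansions. The bookkeeping identifying $f''(t_0)$ with exactly the cross-curvature expression under the nullity constraint is routine but must be done carefully with signs; I would defer to the computations in \cite[Theorem 4.10]{KimMcCann07p} or \cite[Theorem 3.2]{Loeper07p} for the precise form.
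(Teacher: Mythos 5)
First, note that the paper itself does not prove Theorem~\ref{T:DASM}: it is quoted, with the proofs attributed to \cite[Theorem 3.2]{Loeper07p} and to the direct argument in \cite[Theorem 4.10]{KimMcCann07p}, so your sketch has to be measured against those arguments. Measured that way, it has a genuine gap exactly at the step you call routine bookkeeping: you propose to ``recognize $f''(t_0)$'' as the cross-curvature \eqref{MTW} along a variation in which $x(s)$ is constant, $x'(0)=0$. But with $x(s)\equiv x$ the quantity $-\partial^2_s\partial^2_t c(x(s),y(t))$ vanishes identically, so \Bthree\ is vacuous there; moreover $f''(t_0)$ is a second-order quantity in $t$ and is not any single cross-curvature. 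The mechanism that actually converts \Bthree\ into $f''(t_0)\ge 0$ is the sliding construction: join $\tx=x(0)$ to $x=x(1)$ by the $c$-segment $x(s)$ with respect to $y(t_0)$; since $s\mapsto -D_y c(x(s),y(t_0))$ is affine, one has $\partial_s\partial_t c(x(s),y(t))\big|_{t=t_0}=-f'(t_0)=0$ for \emph{every} $s$, so the nullity \eqref{nullity} holds along the whole segment; \Bthree\ then makes $g(s):=-\partial^2_t c(x(s),y(t))\big|_{t=t_0}$ convex in $s$ (one must also check that $\ddot y(t_0)$ drops out of $\partial_s^2\partial_t^2 c$, since $y(\cdot)$ is a $c^*$-segment with respect to $\tx$ rather than $x(s)$ --- it does, precisely because $s\mapsto D_y c(x(s),y(t_0))$ is affine); finally, affinity of $t\mapsto -D_x c(\tx,y(t))$ gives $g'(0)=0$, so $g$ is nondecreasing and $f''(t_0)=g(1)-g(0)\ge 0$. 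This double integration along the segment in the $x$-variable is the heart of Loeper's and Kim--McCann's proofs and is absent from your plan.

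Second, your handling of the degenerate case does not close the argument. Knowing only that $f''\ge 0$ at every interior critical point is insufficient ($f(t)=-(t-\tfrac12)^4$ satisfies this and violates the conclusion), so the perturbation step is essential; but variant (ii) leaves the admissible class: at a critical point of $f+\e\phi$ one has $f'=-\e\phi'\neq 0$, the nullity \eqref{nullity} fails, and \Bthree\ yields nothing; and variant (i) invokes ``$f''\ge 0$ on a whole neighborhood'' of the maximum, which the argument never provides (only at critical points). A perturbation that does work is of the endpoint $x$: since $D^2_{xy}c$ is nondegenerate by \Bzero--\Bone, a Sard/transversality argument shows that for almost every nearby choice of $x$ the corresponding $f$ is a Morse function of $t$; then every interior critical point has $f''>0$, hence is a local minimum, the maximum is attained at $t\in\{0,1\}$, and the general case follows in the limit. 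Alternatively, simply follow the argument of \cite[Theorem 4.10]{KimMcCann07p}.
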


It is through this theorem and the next that hypothesis \Bthree\ and the
non-negative cross-curvature hypothesis \Bfour\
enter crucially. Among the many corollaries Loeper deduced from this result,
we shall need two. Proved in \cite[Theorem 3.1 and Proposition 4.4]{Loeper07p}
(alternately \cite[Theorem 3.1]{KimMcCann07p} and \cite[A.10]{KimMcCannAppendices}),
they include the $c$-convexity of the so-called {\em contact set}
(meaning the $c^*$-subdifferential at a point), and a local to global principle.

\begin{corollary}\label{C:local-global}
Assume \Bzero--\Bthree\ and fix $(\tx,\ty) \in \cl \U
\x \cl\V$. If $u$ is $c$-convex then $\p^c u(\tx)$ is $c^*$-convex
with respect to $\tx \in \U$, i.e.\ $-D_x c(\tx, \p^c u(\tx))$
forms a convex subset of $T^*_\tx \U$. Furthermore,  any local
minimum of the map $x\in\U \longmapsto u(x) + c(x,\ty)$ is a
global minimum.
\end{corollary}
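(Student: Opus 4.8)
The plan is to derive both assertions from Loeper's maximum principle `\DASM' (Theorem~\ref{T:DASM}), using the bi-convexity hypothesis \Btwo\ only to guarantee that the $c$- and $c^*$-segments I need actually exist inside $\cl\U$ and $\cl\V$, and \Bone\ to pass between these segments and line segments.

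For the first assertion, fix $\tx$ and pick $y_0,y_1\in\p^c u(\tx)$. By \Btwo\ the line segment joining $-D_xc(\tx,y_0)$ to $-D_xc(\tx,y_1)$ lies in $\V_\tx=-D_xc(\tx,\V)$, so \Bone\ pulls it back to a $c^*$-segment $t\in[0,1]\mapsto y(t)$ with respect to $\tx$, with $y(0)=y_0$, $y(1)=y_1$. I want $y(t)\in\p^c u(\tx)$ for every $t$, i.e.\ $u(\tx)+u^\cs(y(t))+c(\tx,y(t))=0$. The inequality ``$\ge 0$'' is automatic from $c$-convexity, so it suffices to prove $u^\cs(y(t))\le -u(\tx)-c(\tx,y(t))$; since $u^\cs(y(t))=\sup_{x\in\cl\U}[-c(x,y(t))-u(x)]$, it is enough to check $-c(x,y(t))-u(x)\le -u(\tx)-c(\tx,y(t))$ for each $x\in\cl\U$. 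Write $-c(x,y(t))=\big[-c(x,y(t))+c(\tx,y(t))\big]-c(\tx,y(t))$ and apply Theorem~\ref{T:DASM} to the line segment $t\mapsto -D_xc(\tx,y(t))$: the bracket is $\le\max_{i\in\{0,1\}}\big[-c(x,y_i)+c(\tx,y_i)\big]$. If the maximum is attained at $i$, then $-c(x,y_i)-u(x)\le u^\cs(y_i)=-u(\tx)-c(\tx,y_i)$ because $y_i\in\p^c u(\tx)$, and substituting yields exactly $-c(x,y(t))-u(x)\le -u(\tx)-c(\tx,y(t))$. Hence $\p^c u(\tx)$ contains the $c^*$-segment with respect to $\tx$ between any two of its points, so $-D_xc(\tx,\p^c u(\tx))$ is convex, which is the claimed $c^*$-convexity.

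For the second assertion, note that \Bzero--\Bthree\ are symmetric under interchanging the two factors (the mixed fourth derivative in \eqref{MTW} is unchanged), so Theorem~\ref{T:DASM} also holds in transposed form: if $s\mapsto -D_yc(x(s),\ty)$ is a line segment then $s\mapsto -c(x(s),y)+c(x(s),\ty)$ lies below the larger of its two endpoint values, and the same holds on every subsegment. Writing $u(x)=\sup_{y\in\cl\V}[-c(x,y)-u^\cs(y)]$ gives $h(x):=u(x)+c(x,\ty)=\sup_y[c(x,\ty)-c(x,y)-u^\cs(y)]$, and the transposed \DASM\ shows each summand, restricted to a $c$-segment $x(s)$ with respect to $\ty$, has sublevel sets that are subintervals of $[0,1]$; taking the supremum over $y$, $s\mapsto h(x(s))$ is therefore quasi-convex along every such $c$-segment. (Equivalently, replacing $c$ by the null-Lagrangian-modified cost $\tc(x,y):=c(x,y)-c(x,\ty)$, which still satisfies \Bzero--\Bthree\ and has $\tc(\cdot,\ty)\equiv 0$, makes $h$ itself $\tc$-convex, with $\min_{\cl\U}h=-h^{\tc^*}(\ty)$ attained exactly on $\{x\mid\ty\in\p^{\tc}h(x)\}$, a set which is $\tc$-convex with respect to $\ty$ by the first part.) Given a local minimum $x_0$ of $h$ and any $x_1\in\U$, \Btwo\ and \Bone\ produce a $c$-segment $x(s)$ with respect to $\ty$ inside $\U$ joining $x_0$ to $x_1$; quasi-convexity of $s\mapsto h(x(s))$ makes $\{s:h(x(s))<h(x_0)\}$ a subinterval, local minimality at $s=0$ keeps that subinterval away from $0$, and feeding this into a connectedness argument along the family of such $c$-segments (as in \cite[Prop.~4.4]{Loeper07p}, \cite[A.10]{KimMcCannAppendices}) forces $h(x_1)\ge h(x_0)$.

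The easy part is the first assertion together with the quasi-convexity of $h$ along $c$-segments in the second: both are essentially bookkeeping once \DASM\ is in hand. The delicate step is upgrading ``local minimum'' to ``global minimum''. Quasi-convexity of $h$ along one $c$-segment does not by itself rule out a profile that is flat at level $h(x_0)$ near $x_0$ and then descends, so one genuinely has to use that $x_0$ minimizes $h$ relative to a full neighbourhood in $\U$ (not merely along a single segment) and propagate this along the $c$-segments; this is precisely where \Btwo\ matters, since it makes the sublevel sets of $h$ connected through $c$-segments that remain inside $\U$.
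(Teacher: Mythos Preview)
Your derivation of the first assertion from \DASM\ is correct and standard; the paper itself supplies no proof of this corollary, only the citations to \cite{Loeper07p} and \cite{KimMcCannAppendices}.

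The second assertion has a genuine gap. You correctly establish that $h(x)=u(x)+c(x,\ty)$ is quasi-convex along every $c$-segment with respect to $\ty$, and you rightly diagnose that this alone does not rule out a flat-then-descending profile. But the ``connectedness argument'' you gesture toward is never carried out, and in fact cannot be made to work from level-set convexity and semiconvexity alone: take $\tu(q_1,q_2)=g(q_1)$ with $g$ smooth, $g\equiv 0$ on $(-\infty,\tfrac12]$ and $g$ strictly decreasing on $[\tfrac12,\infty)$; this function is semiconvex with convex sublevel sets, yet the origin is a local but not global minimum. What your argument never exploits is the $c$-convexity of $u$ itself. The route in the cited references passes through the identity $\partial u(x_0)=-D_xc\big(x_0,\p^c u(x_0)\big)$: the inclusion $\supset$ is immediate, while $\subset$ holds because every extreme point of the convex set $\partial u(x_0)$ is, by semiconvexity, a limit $\lim_k Du(x_k)=-D_xc(x_0,y_\infty)$ with $y_\infty\in\p^c u(x_0)$, and $-D_xc\big(x_0,\p^c u(x_0)\big)$ is convex precisely by \emph{your} first assertion. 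A local minimum $x_0\in\U$ of $h$ then gives $-D_xc(x_0,\ty)\in\partial u(x_0)$, whence $\ty\in\p^c u(x_0)$ by \Bone, which is exactly the global minimality of $x_0$.
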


As shown in \cite[Corollary 2.11]{KimMcCann08p},
the strengthening \Bfour\ of hypothesis \Bthree\ improves
the conclusion of Loeper's maximum principle. 
This improvement asserts that the altitude  {\blue $f(t)$}   at each point
of the evolving landscape then accelerates as a function of $t\in[0,1]$:

\begin{theorem}[{\bf Time-convex DASM}]\label{T:time-convex DASM}
Assume \Bzero--\Bfour\ and fix $x,\tx \in \cl \U$.
If $t\in[0,1] \longmapsto -D_xc(\tx,y(t))$ is a line segment then
{\blue the function}
$t \in [0,1] \longmapsto f(t) := -c(x,y(t)) + c(\tx,y(t))$ is convex.
\end{theorem}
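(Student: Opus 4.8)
The plan is to reduce the convexity statement to a differential inequality of the form $f''(t)\ge 0$ on $[0,1]$, and then to recognize the right-hand side as precisely the cross-curvature quantity controlled by hypothesis \Bfour. First I would fix the $c^*$-segment $t\in[0,1]\longmapsto -D_x c(\tx,y(t))$, which by definition is an affinely parametrized line segment, so $y(t)$ traces a $c^*$-segment with respect to $\tx$. Set $g(t):=-c(x,y(t))$ and $h(t):=c(\tx,y(t))$, so $f=g+h$. Since $t\longmapsto -D_x c(\tx,y(t))$ is affine, one computes $h''(t)=\frac{d^2}{dt^2}c(\tx,y(t))$ using the chain rule: writing $p(t):=-D_xc(\tx,y(t))$ (affine in $t$), the map $t\mapsto y(t)$ is determined by \Bone, and the point is that $h''(t)$ has a clean expression — in fact $\frac{\partial^2}{\partial t^2}c(\tx,y(t))=0$ would hold in the borderline situation; more carefully, one gets $h''(t) = \langle \dot p(t), \text{(something)}\rangle$ that I would need to track. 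The cleaner route is to differentiate directly: $f''(t) = -\frac{d^2}{dt^2}\big[c(x,y(t)) - c(\tx,y(t))\big]$.

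The key step is to introduce a second parameter. Let $s\in[0,1]\longmapsto x(s)$ be a $c$-segment with respect to $y(0)$ (or with respect to some fixed $\ty$) joining $\tx=x(0)$ to $x=x(1)$, i.e.\ $s\mapsto -D_yc(x(s),\cdot)$ is affine along this family. Then consider $\Phi(s,t):=c(x(s),y(t))$. By construction, the curve $t\mapsto -D_xc(x(0),y(t))$ is affine and the curve $s\mapsto -D_yc(x(s),y(0))$ is affine, so the pair $\big(D_yc(x(s),y(0)),D_xc(x(0),y(t))\big)$ is affine in $(s,t)$ — exactly the configuration appearing in \eqref{MTW}. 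Hypothesis \Bfour\ then asserts $-\partial_{ss}\partial_{tt}\Phi\big|_{(s_0,t)}\ge 0$ for every $(s_0,t)$, \emph{without} the nullity constraint \eqref{nullity}. Integrating $\partial_s\big(-\partial_{ss}\partial_{tt}\Phi\big)$... no — rather, I would write
\[
f''(t) = -\big[\partial_{tt}\Phi(1,t) - \partial_{tt}\Phi(0,t)\big]
= -\int_0^1 \partial_s\,\partial_{tt}\Phi(s,t)\,ds,
\]
and then express $\partial_s\partial_{tt}\Phi(s,t)$ as $\partial_s\partial_{tt}\Phi(0,t) + \int_0^1 \partial_{ss}\partial_{tt}\Phi(\sigma s, t)\,\cdot\,(\cdots)$; the first term should vanish because $t\mapsto\partial_t\Phi(0,t)$ relates to the affine curve $-D_xc(\tx,y(t))$, forcing $\partial_s\partial_{tt}\Phi(0,t)=0$ (this uses that at $s=0$ we are differentiating the quantity whose $t$-derivative is affine). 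Hence $f''(t)$ becomes an integral of $-\partial_{ss}\partial_{tt}\Phi$, which is $\ge 0$ by \Bfour, giving convexity of $f$.

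The main obstacle I anticipate is the bookkeeping in the reparametrization: one must check carefully that the two-parameter family $(x(s),y(t))$ can be chosen so that the mixed object $\big(D_yc(x(s),y(0)),D_xc(x(0),y(t))\big)$ is jointly affine (not merely affine in each variable separately on the axes), since \Bfour/\eqref{MTW} is stated for a curve $t\mapsto(\cdots)$ that is an affinely parametrized line segment in $\R^{2n}$ — here I am applying it along the $s$-direction for each fixed $t$, which requires the $s$-curve $s\mapsto D_yc(x(s),y(0))$ to be affine (true by choice of $c$-segment) while $D_xc(x(0),y(t))$ is held fixed, so the relevant segment in $\R^{2n}$ is indeed affine. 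The delicate identity is the vanishing of the boundary term $\partial_s\partial_{tt}\Phi(0,t)=0$: this should follow because along $s=0$ the function $t\mapsto -\partial_t\Phi(0,t) = -D_xc(\tx,y(t))\cdot\dot y(t)$... and affineness of $-D_xc(\tx,y(t))$ in $t$ gives $\partial_{tt}\big(-D_xc(\tx,y(t))\big)=0$, but I must convert this into a statement about $\partial_s\partial_{tt}\Phi$; alternatively I can cite \cite[Corollary 2.11]{KimMcCann08p} directly, where this computation identifying $f''$ with an integral of the cross-curvature is carried out, and simply note that \Bfour\ removes the sign restriction coming from \eqref{nullity}, yielding $f''\ge 0$ throughout $[0,1]$ rather than only the maximum-principle conclusion of Theorem~\ref{T:DASM}.
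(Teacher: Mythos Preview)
The paper does not give its own proof of this statement; it simply records the result and attributes it to \cite[Corollary 2.11]{KimMcCann08p}. Your outline follows the same strategy used there---reduce to $f''(t_0)\ge 0$, interpolate $\tx$ and $x$ by a $c$-segment $s\mapsto x(s)$, write $f''(t_0)$ as a double integral in $s$ of the mixed fourth derivative, and invoke \Bfour---so in that sense your proposal and the cited argument agree, and your final suggestion to cite \cite{KimMcCann08p} is exactly what the paper does.

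There is, however, one genuine gap in your justification of the key inequality $-\partial_{ss}\partial_{tt}\Phi(\sigma,t)\ge 0$. Condition \Bfour\ asserts nonnegativity of the cross-curvature \eqref{MTW} at the \emph{base point} $(x(0),y(0))$ of curves satisfying the affine hypothesis there. To invoke it at $(x(\sigma),y(t))$ you would need $s\mapsto -D_yc(x(s),y(t))$ and $\tau\mapsto -D_xc(x(\sigma),y(\tau))$ to be affine; your choice of $x(s)$ as a $c$-segment with respect to $y(0)$ gives neither (the first only for $t=0$, the second only for $\sigma=0$). Your sentence ``the relevant segment in $\R^{2n}$ is indeed affine'' refers to the segment based at the origin, not at $(\sigma,t)$, so it does not close the gap.

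The fix is to let the interpolating segment depend on the point where you evaluate $f''$. For each fixed $t_0$, take $x(s)=x_{t_0}(s)$ to be the $c$-segment from $\tx$ to $x$ with respect to $y(t_0)$. Your boundary identity $\partial_s\partial_{tt}\Phi(0,t_0)=0$ still holds, since $x_{t_0}(0)=\tx$ and $t\mapsto -D_xc(\tx,y(t))$ is affine by hypothesis, so $f''(t_0)=\int_0^1\!\int_0^s\bigl[-\partial_{ss}\partial_{tt}\Phi(\sigma,t_0)\bigr]\,d\sigma\,ds$ as you wrote. In the adapted coordinate $q=-D_yc(\cdot,y(t_0))$ the curve $q(s)$ is now affine, and the crucial structural identity $D^2_{qq}D_y\bar c(q,y(t_0))=D^2_{qq}(-q)=0$ (with $\bar c(q,y)=c(x(q),y)$) kills the $y''$-correction term in $\partial_{tt}\partial_{ss}\Phi(\sigma,t_0)$. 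What remains is exactly the tensorial cross-curvature at $(x(\sigma),y(t_0))$ applied to $(x'(\sigma),y'(t_0))$, which is nonnegative by \Bfour\ regardless of whether $y(\cdot)$ is adapted at $x(\sigma)$.
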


\begin{remark}\label{rmk:symmetry}{\rm
Since all assumptions \Bzero--\Bfour\ on the cost
are symmetric in $x$ and $y$,
all the results above still hold when {\RJM the roles of $x$ and $y$ are exchanged.}}
\end{remark}


\subsection*{{\blue $c$-Monge-Amp\`ere equation
}}\label{S:notation}
%

Fix $\lambda,\Lambda >0$ and an open domain $\U^\lambda \subset
\U$, and let $\uu$ be a $c$-convex solution of the
$c$-Monge-Amp\`ere equation
\begin{equation}
\label{eq:cMA}
\left\{ \begin{array}{ll}
\l \Leb{n}
\leq
|\p^c\uu| \leq \frac{1}{\l}\Leb{n} 
&\text{in } \U^\lambda \subset \U,
\\ |\p^c\uu| \le \Lambda \Leb{n} &\text{in } \cl\U.
\end{array}
\right.
\end{equation}
{\blue Note that throughout this paper, we} {\RJM require $c$-convexity \Btwo\ not of $\U^\lambda$
but only of $U$.}
We sometimes abbreviate \eqref{eq:cMA} by writing $|\p^c \uu| \in
[\lambda, 1/\lambda]$. In the following sections  we will prove
interior {\blue H\"older} differentiability of $\uu$ on $\U^\lambda$, that is {\blue $\uu
\in C^{1, \alpha}_{loc}(\U^\lambda)$; see Theorems \ref{T:continuity} and \ref{T:C1alpha}}.

\subsection*{\YHK Convex sets}
{\RJM We close by recalling two {\blue nontrivial} results for convex sets.} {\YHK These will be essential in Section~\ref{S:Alexandrov}
 and later on.}
The first one is due to Fritz John \cite{John48}:

\begin{lemma}[John's lemma]\label{L:John}
For a compact convex set $Q \subset \R^n$ {\blue with nonempty interior}, 
there exists an affine transformation $ L: \R^n \to \R^n$ such
that $\cl {B_1} \subset L^{-1}(Q) \subset \cl {B_{n}}$.
\end{lemma}

The above result can be restated by saying that any compact convex set $Q$ {\blue with nonempty interior} contains an ellipsoid $E$, whose dilation $n\rdot E$ by factor $n$
with respect to its center
contains $Q$:
\begin{equation}\label{E:well-centered}
E \subset Q \subset n \rdot E.
\end{equation}

{\RJM The following is our main result from \cite{FKM-convex}; it enters crucially
in the proof of Theorem~\ref{thm:estimate}.}

\begin{theorem}[\blue Convex bodies and supporting hyperplanes]
\label{T:ratio}
Let $\tilde Q \subset \R^n$ be a well-centered convex body, meaning that \eqref{E:well-centered}
holds for some ellipsoid $E$ centered at the origin. Fix $ 0 \le s \le s_0 <1$.
To each $y \in (1-s) \rdot \partial \tilde Q$
corresponds at least one line $\ell$ through the origin and hyperplane $\Pi$
supporting $\tilde Q$ such that:
$\Pi$ is orthogonal to $\ell$ and
 \begin{equation}\label{E:key bound}
\dist(y, \Pi) \le c(n,s_0) s^{ 1/2^{n-1}} \diam(\ell \cap \tilde Q).
 \end{equation}
Here, $c(n,s_0)$ is a constant depending {\blue only} on $n$ and $s_0$, namely
$c(n,s_0) = n^{3/2} (n-\frac{1}{2})
\Big( \frac{1+(s_0)^{1/2^n}}{1-(s_0)^{1/2^n}} \Big)^{n-1}$.
\end{theorem}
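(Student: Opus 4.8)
The plan is to reduce the statement to a one-dimensional estimate along the line $\ell$, and to choose $\ell$ cleverly so that the supporting hyperplane $\Pi$ perpendicular to it is close to $y$. First I would normalize: since the hypothesis is affinely invariant in an appropriate sense, I would like to replace $\tilde Q$ by the ball, but the catch is that the dilation factor $n$ and the constant $c(n,s_0)$ must be tracked, so the normalization is only up to the John position $E \subset \tilde Q \subset n\cdot E$. So instead I would work directly with $\tilde Q$ and the centered ellipsoid $E$; after a linear change of coordinates (which maps lines through the origin to lines through the origin and scales all the quantities in \eqref{E:key bound} consistently, since both $\dist(y,\Pi)$ and $\diam(\ell\cap\tilde Q)$ are measured along $\ell$), I may assume $E = B_1$, hence $B_1 \subset \tilde Q \subset B_n$.

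Next, fix $y \in (1-s)\cdot\partial\tilde Q$, so $y = (1-s)z$ for some $z \in \partial\tilde Q$. The idea is to pick the line $\ell$ not simply as the line through $z$, but as the line through the point $w \in \partial\tilde Q$ which is "radially farthest" in a direction chosen to make the supporting hyperplane nearly perpendicular to $\ell$. The cleanest route is an iterative/dyadic argument on the dimension $n$ — this is what the exponent $1/2^{n-1}$ and the recursive shape of $c(n,s_0)$ strongly suggest. Concretely, one slices $\tilde Q$ by a hyperplane through the origin: on one side the section is a lower-dimensional well-centered convex body (with a controlled change of the centering constants, which is where the factor $\frac{1+(s_0)^{1/2^n}}{1-(s_0)^{1/2^n}}$ enters), and one applies the inductive hypothesis there, while the transverse direction contributes the "extra" square root, doubling the exponent's denominator at each step. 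The base case $n=1$ is trivial: $\tilde Q$ is a segment, $y \in (1-s)\partial\tilde Q$ is an endpoint scaled in, the supporting hyperplane is the nearer endpoint, and $\dist(y,\Pi) = s\cdot(\text{half-length})\le s\,\diam(\ell\cap\tilde Q)$, with $c(1,s_0)=1$ which matches the formula.

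The main obstacle is the inductive step: one must show that after slicing by a hyperplane through the origin, the resulting section $\tilde Q'$ is again \emph{well-centered} (i.e. satisfies \eqref{E:well-centered} for an ellipsoid centered at the origin) with a centering ratio that degrades only by the stated controllable factor, and simultaneously that the point $y$ — or an appropriate surrogate for it — lands on $(1-s')\cdot\partial\tilde Q'$ with $s' \le s_0$ still valid and $s' \asymp s^{1/2}$ up to the dimensional constant. Controlling this degradation quantitatively, and verifying that the product of the per-step losses telescopes to exactly $c(n,s_0) = n^{3/2}(n-\tfrac12)\big(\tfrac{1+(s_0)^{1/2^n}}{1-(s_0)^{1/2^n}}\big)^{n-1}$, is the delicate bookkeeping at the heart of the proof; this is precisely the content deferred to \cite{FKM-convex}, so here I would only set up the induction, isolate the one-dimensional estimate, and indicate how the dimensional reduction produces the square-root iteration, citing \cite{FKM-convex} for the sharp constants.
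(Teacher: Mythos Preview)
The paper does not prove Theorem~\ref{T:ratio}; it is stated without proof and explicitly attributed to the companion paper \cite{FKM-convex} (see the sentence introducing it). Your proposal ultimately defers to \cite{FKM-convex} as well, so in that minimal sense there is nothing to compare. Your outlined strategy --- induction on the dimension with a square-root loss in $s$ at each slicing step --- is the natural reading of the exponent $s^{1/2^{n-1}}$ and the recursive form of $c(n,s_0)$, and is consistent with the structure of the argument in \cite{FKM-convex}.

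That said, your normalization step contains a genuine gap. You assert that after a linear map $L$ sending $E$ to $B_1$, the ratio $\dist(y,\Pi)/\diam(\ell\cap\tilde Q)$ is preserved because ``both quantities are measured along $\ell$''. This is false: orthogonality is not an affine invariant. If $\ell=\R v$ and $\Pi\perp\ell$ supports $\tilde Q$, then $L(\Pi)$ is in general not perpendicular to $L(\ell)$, and the hyperplane supporting $L(\tilde Q)$ that \emph{is} perpendicular to $L(\ell)$ is a different object altogether. Concretely, with $|v|=1$ one has $\dist(y,\Pi)=\big|\max_{z\in\tilde Q}\langle z,v\rangle - \langle y,v\rangle\big|$, whereas after applying $L$ the analogous quantity involves $\langle\,\cdot\,,L^{T}Lv\rangle/|Lv|$, which equals $|Lv|\,\langle\,\cdot\,,v\rangle$ only when $v$ is an eigenvector of $L^{T}L$. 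Since well-centeredness bounds only the ratio between $\tilde Q$ and $E$, not the eccentricity of $E$ itself, this distortion cannot be absorbed into a constant depending only on $n$ and $s_0$. One must therefore carry the ellipsoid through the induction (or restrict $\ell$ to principal axes of $E$ and justify that this suffices), rather than normalize it away; this is part of what makes the argument in \cite{FKM-convex} nontrivial. A minor arithmetic slip: the formula gives $c(1,s_0)=1\cdot\tfrac12\cdot 1=\tfrac12$, not $1$, and your base-case computation $\dist(y,\Pi)=s\cdot(\text{half-length})=\tfrac{s}{2}\diam(\ell\cap\tilde Q)$ matches this with equality.
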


\section{Choosing coordinates which ``level-set convexify'' $c$-convex functions}\label{S:transform}

{\blue Recall that $c \in C^4(\cl\U \times \cl \V)$ is a {\blue cost function} satisfying \Bone--\Bthree\ on a pair of
bounded domains $\U$ and $\V$ which are strongly $c$-convex with
respect to each other \Btwos.}
In the current section, we introduce an important transformation (mixing
dependent and independent variables)
for the cost $c(x,y)$ and potential $\uu(x)$, which plays a crucial role in the subsequent
analysis. This change of variables and its most relevant properties are encapsulated in the
following {\blue Definition~\ref{D:cost exponential} and Theorem~\ref{thm:apparently convex}.

}

\begin{definition}[Cost-exponential coordinates and apparent properties]
\label{D:cost exponential}
Given $c \in C^4\big(\cl\U \times \cl\V\big)$ strongly twisted \Bzero--\Bone, we refer to
the coordinates $(q,p) \in \cl \U_{\ty} \times \cl \V_{\tx}$ defined by
\begin{equation}\label{q of x}
q = q(x) = - D_y c(x,\ty), \qquad p = p(y) = -D_x c(\tx,y),
\end{equation}
as the \emph{cost exponential coordinates} from $\ty \in \cl \V$ and $\tx \in \cl\U$ respectively.
We denote the inverse diffeomorphisms by
$x:\cl \U_{\ty} \subset T^*_{\ty} \V\longmapsto \cl\U$ and
$y:\cl \V_{\tx} \subset T^*_{\tx} \U \longmapsto \cl\V$; they satisfy
\begin{equation}\label{x of q}
q = - D_y c(x(q),\ty), \qquad p = -D_x c(\tx,y(p)).
\end{equation}
The cost $\tilde c(q,y) = c(x(q),y) - c( x(q),\ty)$ is called the
\emph{modified cost at $\ty$}. A subset of $\cl \U$ or function
thereon is said to \emph{appear from $\ty$} to have
property $A$, 
if it has property $A$ when expressed in the coordinates $q \in
\cl \U_{\ty}$.
\end{definition}

\begin{remark}{\rm
Identifying the cotangent vector $0 \oplus q$
with the tangent vector $q^* \oplus 0$ to $\U \times \V$ using
the pseudo-metric of Kim and McCann \cite{KimMcCann07p}
shows $x(q)$ %
to be the projection to $\U$ of the pseudo-Riemannian
exponential map $\exp_{(\tx,\ty)} (q^* \oplus 0)$;  similarly
$y(p)$
is the projection to $\V$ of $\exp_{(\tx,\ty)} (0 \oplus p^*)$.
Also, $x(q) =: \cs$-$\exp_{\ty}q$ and $y(p) =: c$-$\exp_{\tx} p$
in the notation of Loeper \cite{Loeper07p}.}
\end{remark}

{\blue

{\em In the sequel, whenever we use the expression  $\tilde c(q,
\cdot)$ or $\tilde u(q)$, we refer to the modified cost function
{\RJM defined above} and level-set convex potential defined {\RJM
below}.} Since properties \Bzero--\Bfour\ {\RJM (}and \Btwos{\RJM )}
were shown to be tensorial in nature (i.e.\ coordinate independent)
in \cite{KimMcCann07p} \cite{Loeper07p}, the modified cost $\tilde
c$ inherits these properties from the original cost $c$ with one
exception:  (\ref{x of q}) defines a $C^3$ diffeomorphism $q \in \cl
\U_{\ty}  \longmapsto x(q) \in \cl \U $, so the cost $\tilde c \in
C^3(\cl \U_{\ty} \times \cl \V)$ may not be $C^4$ smooth. However,
its definition reveals that we may still differentiate $\tilde c$
four times as long as no more than three of the four derivatives
fall on the variable $q$,  and it leads to the same geometrical
structure (pseudo-Riemannian curvatures, including (\ref{MTW})) as
the original cost $c$ since the metric tensor and symplectic form
defined in \cite{KimMcCann07p} involve only mixed derivatives
$D^2_{qy} \tilde c$,  and therefore remain $C^2$ functions of the
coordinates $(q,y) \in \cl \U_\ty \times \cl \V$.

We also use
\begin{eqnarray}\label{bi-Lipschitzbound}
\beta^\pm_c &=&\beta^\pm_c(U \times V) := \ \ \| (D^2_{xy} c)^{\pm 1}\|_{L^\infty(U \times V)}\\
\gamma^\pm_c &=& \gamma^\pm_c(U \times V) := \|\det (D^2_{xy} c)^{\pm 1}\|_{L^\infty(U \times V)}
\label{Jacobian bound}
\end{eqnarray}
to denote the bi-Lipschitz constants $\beta^\pm_c$ of the
coordinate changes (\ref{q of x}) and the Jacobian bounds
$\gamma^\pm_c$ for the same transformation. Notice $\gamma^+_c
\gamma^-_c \ge 1$ for any cost satisfying \Bone, and equality
holds whenever the cost function $c(x,y)$ is quadratic. So the
parameter $\gamma^+_c\gamma^-_c$ crudely quantifies the departure
from the quadratic case. The inequality $\beta^+_c \beta^-_c \ge
1$ is much more rigid,  equality implying $D^2_{xy} c(x,y)$ is the
identity matrix, and not merely constant.
\\}

Our first contribution is the following theorem.
If the cost function satisfies \Bthree, then
the level sets of the $\tilde c$-convex potential appear convex from $\ty$,
as was discovered independently from us by Liu \cite{Liu09}, and
exploited by Liu with Trudinger and Wang \cite{LiuTrudingerWang09p}.
Moreover, for a non-negatively cross-curved cost \Bfour, it shows that
any $\tilde c$-convex potential appears convex from $\ty \in \cl \V$.
Note that although the difference between the cost $c(x,y)$ and the modified cost
$\tilde c(q,y)$ depends on $\ty$,  they differ by a null Lagrangian $c(x,\ty)$
which --- being independent of $y \in \V$ --- does not affect the question
of which maps $G$ attain the infimum (\ref{Monge}).
Having a function with convex level sets is a useful starting point,
since  {\blue it opens a possibility to apply the approach and techniques developed by Caffarelli, and refined by Gutie\'errez, Forzani, Maldonado and others (see \cite{Gutierrez01} \cite{ForzaniMaldonado04}), to address the regularity of $c$-convex potentials.}


\begin{theorem}[Modified $c$-convex functions appear level-set convex]\label{thm:apparently convex}
Let $c \in C^4 \big(\cl \U \times \cl\V\big)$ satisfy
\Bzero--\Bthree.
If $\uu = \uu^{c^* c}$ is $c$-convex on $\cl \U$,  then
$\tu(q) = \uu(x(q)) + c(x(q),\ty)$ has convex level sets,
as a function of the cost exponential coordinates
$q \in \cl \U_{\ty}$ from $\ty \in \cl \V$. Moreover,
\begin{equation}
\label{eq:semiconvexity}
{\blue \tu + M_c|q|^2\quad \text{is convex},}
\end{equation}
where $M_c:=(\beta_c^-)^2\|D_{xx}^2c\|_{L^\infty(U\times V)} + (\beta_c^-)^3\|D_{x}c\|_{L^\infty(U\times V)}\|D_{xx}^2D_yc\|_{L^\infty(U\times V)}$.
If, in addition, $c$ is non-negatively cross-curved \Bfour\ then $\tu$ is convex on
$\cl \U_{\ty}$.
In either case $\tu$ is minimized at $q_0$ if $\ty\in \p^c \uu(x(q_0))$.
Furthermore, $\tu$ is $\tilde c$-convex with respect to the modified cost
$\tilde c(q,y) := c(x(q),y) - c(x(q),\ty)$ on
$\cl \U_{\ty} \times \cl \V$, and
$\p^{\tilde c} \tu(q) = \p^c \uu(x(q))$ for all $q \in \cl \U_\ty$.
\end{theorem}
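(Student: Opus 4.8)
The plan is to verify the claims of Theorem~\ref{thm:apparently convex} in the following order: first the $\tilde c$-convexity of $\tu$ (and the corresponding identification of subdifferentials), since this is essentially a change-of-variables bookkeeping, then the semiconvexity estimate \eqref{eq:semiconvexity}, which is a quantitative Hessian bound, and finally — the heart of the matter — the level-set convexity under \Bthree\ and the full convexity under \Bfour, which is where Loeper's maximum principle enters. Throughout I would fix $\ty\in\cl\V$, write $q=q(x)=-D_yc(x,\ty)$ as in \eqref{q of x}, and recall that $\tilde c(q,y)=c(x(q),y)-c(x(q),\ty)$, so that $\tu(q)=\uu(x(q))+c(x(q),\ty)$ and $\tu(q)+\tilde c(q,y)=\uu(x(q))+c(x(q),y)$.

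For the $\tilde c$-convexity: since $\uu=\uu^{c^*c}$, for every $q$ we have $\tu(q)+\tilde c(q,y)=\uu(x(q))+c(x(q),y)\ge -\uu^{c^*}(y)$ with equality for some $y$; conversely $\uu^{c^*}(y)=\sup_x(-c(x,y)-\uu(x))=\sup_q(-\tilde c(q,y)-\tu(q))=:\tu^{\tilde c^*}(y)$, because the substitution $x\leftrightarrow q$ is a bijection $\cl\U\leftrightarrow\cl\U_\ty$ that leaves the quantity being supremized unchanged. Hence $\tu^{\tilde c^*\tilde c}(q)=\sup_y(-\tilde c(q,y)-\tu^{\tilde c^*}(y))=\sup_y(-\tilde c(q,y)-\uu^{c^*}(y))=\uu^{c^*c}(x(q))+c(x(q),\ty)=\tu(q)$, so $\tu$ is $\tilde c$-convex. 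The same identity $\tu(q)+\tilde c(q,y)+\tu^{\tilde c^*}(y)=\uu(x(q))+c(x(q),y)+\uu^{c^*}(y)$ shows $(q,y)\in\p^{\tilde c}\tu$ iff $(x(q),y)\in\p^c\uu$, giving $\p^{\tilde c}\tu(q)=\p^c\uu(x(q))$; in particular if $\ty\in\p^c\uu(x(q_0))$ then $(q_0,\ty)\in\p^{\tilde c}\tu$, and since $\tilde c(q,\ty)=c(x(q),\ty)-c(x(q),\ty)\equiv 0$ we get $\tu(q_0)+\tu^{\tilde c^*}(\ty)=0\le\tu(q)+\tu^{\tilde c^*}(\ty)$ for all $q$, i.e. $\tu$ is minimized at $q_0$.

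For \eqref{eq:semiconvexity}: $\tu$ is a $\tilde c$-convex function for a $C^2$ (in the relevant mixed derivatives) cost, so it is semiconvex with modulus controlled by the second $q$-derivatives of $\tilde c(q,y)$. One differentiates $\tilde c(q,y)=c(x(q),y)-c(x(q),\ty)$ twice in $q$ using the chain rule; $Dx/Dq=(D^2_{xy}c)^{-1}$ contributes factors $\beta_c^-$, the second derivative $D^2_{qq}x$ contributes $(\beta_c^-)^3\|D^2_{xx}D_yc\|$, and one collects terms into the stated constant $M_c$. This is routine once the chain-rule expansion is written out; it delivers $D^2_{qq}\tilde c(\cdot,y)\ge -2M_c\,\mathrm{Id}$ uniformly in $y$, hence $\tu+M_c|q|^2$ is convex.

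The main obstacle is the level-set convexity under \Bthree. Here I would argue by contradiction: if some sublevel set $\{\tu\le m\}$ fails to be convex, pick $q_0,q_1$ in it and a point $q_t$ on the segment joining them with $\tu(q_t)>m\ge\max\{\tu(q_0),\tu(q_1)\}$, chosen so that $\tu(q_t)$ is a strict local maximum of $s\mapsto\tu(q_s)$ along the segment; choose $\ty_t\in\p^{\tilde c}\tu(q_t)$. In the $q$-coordinates the segment $s\mapsto q_s$ is affine, which by \eqref{q of x} means $s\mapsto -D_yc(x(q_s),\ty)$ is a line segment — precisely the hypothesis of Loeper's maximum principle (Theorem~\ref{T:DASM}), applied with $x=x(q_t)$ and $\tx=x(q_s)$ and the $c^*$-segment $t\mapsto -D_xc(\tx,y(t))$ through $\ty$ and $\ty_t$ (using \Btwos\ so these segments stay in the domain). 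Using $\tu(q_s)+\tilde c(q_s,\ty_t)\ge\tu(q_t)+\tilde c(q_t,\ty_t)$ together with $\tilde c(q,\ty)\equiv0$ and translating back through the definitions, \textbf{DASM} forces $f(t):=-c(x(q_t),y(t))+c(x(q_s),y(t))$ to be dominated by its endpoint values, which contradicts the strict local maximality at $q_t$; this is exactly the argument that the $c^*$-subdifferential is $c$-convex (Corollary~\ref{C:local-global}), adapted to sublevel sets. For the \Bfour\ case one runs the same scheme but now invokes Time-convex \textbf{DASM} (Theorem~\ref{T:time-convex DASM}): convexity of $f(t)$ in $t$ upgrades the endpoint bound to the statement that $s\mapsto\tu(q_s)$ itself is convex along every segment, i.e. $\tu$ is convex. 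The delicate points to get right are that the relevant segments remain inside $\cl\U_\ty$ and $\cl\V_\tx$ (guaranteed by \Btwos\ and \Btwo), and that the nullity condition \eqref{nullity} governing when \Bthree\ applies is automatically met along the distinguished segments — which is precisely the content of Loeper's derivation of \textbf{DASM} from \Bthree, so it suffices to cite Theorem~\ref{T:DASM} rather than re-examine \eqref{nullity} by hand.
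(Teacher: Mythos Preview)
Your treatment of the $\tilde c$-convexity, the identification $\p^{\tilde c}\tu(q)=\p^c\uu(x(q))$, the minimization at $q_0$, and the semiconvexity bound \eqref{eq:semiconvexity} are all correct and match the paper's argument.

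The gap is in your level-set convexity argument, specifically in how you invoke \DASM. You correctly observe that affinity of $s\mapsto q_s$ means $s\mapsto -D_yc(x(q_s),\ty)$ is a line segment, i.e.\ $x(q_s)$ is a $c$-segment with respect to $\ty$. But then you apply Theorem~\ref{T:DASM} in the wrong direction: you introduce a $c^*$-segment $y(t)$ through $\ty$ and $\ty_t$ with respect to some $\tx$, and try to vary $y$. That is not the hypothesis your $c$-segment matches. What is needed is the \emph{symmetric} form of \DASM\ (Remark~\ref{rmk:symmetry}): fix two target points $y$ and $\ty$, let $x(s)$ be a $c$-segment with respect to $\ty$, and conclude $-c(x(s),y)+c(x(s),\ty)\le\max$ of its endpoint values. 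With $y=\ty_t$ this reads $-\tilde c(q_s,\ty_t)\le\max\{-\tilde c(q_0,\ty_t),-\tilde c(q_1,\ty_t)\}$; combined with $\tu(q_t)=-\tilde c(q_t,\ty_t)-\tu^{\tilde c^*}(\ty_t)$ and $\tu(q_i)\ge -\tilde c(q_i,\ty_t)-\tu^{\tilde c^*}(\ty_t)$ you get the contradiction directly. Your $c^*$-segment through $\ty$ and $\ty_t$ is a red herring, and your function $f(t)=-c(x(q_t),y(t))+c(x(q_s),y(t))$ mixes the two parameters $s,t$ in a way that does not lead anywhere. The same confusion recurs in the \Bfour\ case: it is $s\mapsto -\tilde c(q_s,y)$ that Theorem~\ref{T:time-convex DASM} (symmetric form) makes convex, not a function of the $y$-segment parameter.

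The paper's proof is more direct than your contradiction wrapper: it simply notes that $\tu(q)=\sup_y\{-\tilde c(q,y)-\tu^{\tilde c^*}(y)\}$, that suprema preserve level-set convexity (and convexity), and then applies the symmetric \DASM\ once to show each $q\mapsto -\tilde c(q,y)$ has convex sublevel sets (is convex under \Bfour). Also, you invoke \Btwos\ to keep segments in the domain, but the theorem assumes only \Btwo; the convexity of $\cl\U_\ty$ from \Btwo\ is all that is needed to keep the $q$-segment inside, and no $c^*$-segment is required at all.
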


\begin{proof}
The final sentences of the theorem are elementary:
$c$-convexity $\uu=\uu^{c^*c}$ asserts
$$
\uu(x)=\sup_{y \in \cl \V} -c(x,y) -\uu^\cs(y) \quad {\rm and} \quad
\uu^\cs(y) = \sup_{q \in \cl \U_{\ty}} -c (x(q),y) - \uu(x(q))
= \tu^{\tilde c^*}(y)
$$
from (\ref{c-transform}), hence
\begin{eqnarray*}
\tu(q) &=& \sup_{y \in \cl \V} -c(x(q),y) + c(x(q),\ty) -\uu^\cs(y)
\\      &=& \sup_{y \in \cl \V} - \tilde c (q, y) - \tu^{\tilde c^*}(y),
\end{eqnarray*}
and $\p^{\tilde c} \tu(q) = \p^c \uu(x(q))$ since all three
suprema above are attained at the same $y \in \cl \V$.
Taking $y=\ty$ reduces the inequality $\tu(q) + \tu^{\tilde c^{*}}(y) + \tilde c(q,y) \ge 0$
to $\tu(q) \ge - \tu^{\tilde c^{*}}(\ty)$ ,  with equality precisely if
$\ty \in \partial^{\tilde c} \tu(q)$.  It remains to address the convexity claims.

Since the supremum $\tu(q)$ of a family of convex functions is again convex,
it suffices to establish the convexity of
$q \in \cl\U_{\ty} \longmapsto -\tilde c(q,y)$ for each $y \in \cl \V$
under hypothesis \Bfour.  For a similar reason, it suffices to establish the level-set
convexity of the same family of functions under hypothesis \Bthree.

First assume \Bthree.  Since
\begin{equation}\label{special geodesic}
D_y \tilde c (q,\ty) = D_y c (x(q),\ty):=-q
\end{equation}
we see that $\tilde c$-segments in $\cl \U_{\ty}$ with respect to $\ty$
coincide with ordinary line segments. Let $q(s) = (1-s) q_0 + s q_1$ be any line
segment in the convex set $\cl \U_{\ty}$.
Define $f(s,y) := -\tilde c(q(s),y) = - c(x(q(s)),y) + c(x(q(s)),\ty)$.
Loeper's maximum principle (Theorem \ref{T:DASM} above, see also Remark \ref{rmk:symmetry})
asserts $f(s,y) \le \max\{f(0,y),f(1,y)\}$,
which implies convexity of each set
$\{q \in \cl \U_{\ty} \mid -\tilde c(q,y) \le const\}$.
Under hypothesis \Bfour, Theorem \ref{T:time-convex DASM}
goes on to assert convexity of $s \in [0,1] \longmapsto f(s,y)$ as desired.

Finally, \eqref{eq:semiconvexity} follows from the fact that $\tu$ is a supremum of cost functions and that
$|D_{qq}\tc | \leq (\beta_c^-)^2|D_{xx}^2c| + (\beta_c^-)^3|D_{x}c||D_{xx}^2D_yc|$ (observe that $D_q x(q) = - D^2_{xy} c(x(q),\tilde y)^{-1}$).
\end{proof}
{\blue
\begin{remark}{\rm
 The above level-set convexity for the modified $\tc$-convex functions requires \Bthree \ condition, as one can easily derive using Loeper's counterexample \cite{Loeper07p}.}
\end{remark}}
The effect of this change of gauge on {\blue the $c$-Monge-Amp\`ere equation  \eqref{eq:cMA}} is summarized in a corollary:

\begin{corollary}[Transformed $\tc$-Monge-Amp\`ere inequalities]
\label{C:Jacobian transform}
Using the hypotheses and notation of Theorem \ref{thm:apparently convex},
if $|\p^c u| \in [\lambda,\Lambda] \subset [0,\infty]$ on $\U' \subset \cl \U$,
then $|\p^\tc \tu| \in [\lambda/\gamma^+_c,\Lambda\gamma^-_c]$ on $\U'_\ty 
= -D_yc(\U',\ty)$, where $\gamma^\pm_c = \gamma^\pm_c(\U' \times
V)$ and $\beta^\pm_c = \beta^\pm_c(\U' \times V)$ are defined in
\eqref{bi-Lipschitzbound}--\eqref{Jacobian bound}. Furthermore,
$\gamma^\pm_\tc := \gamma^\pm_\tc(\U'_\ty \times \V) \le
\gamma^+_c \gamma^-_c$ and $\beta^\pm_\tc := \beta^\pm_\tc(\U'_\ty
\times \V) \le \beta^+_c \beta^-_c$.
\end{corollary}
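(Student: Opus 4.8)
The plan is to track how the $c$-Monge--Amp\`ere measure transforms under the diffeomorphism $q = q(x) = -D_y c(x,\ty)$, using the identity $\p^{\tc}\tu(q) = \p^c u(x(q))$ already proved in Theorem \ref{thm:apparently convex}. First I would fix a Borel set $Q \subset \U'_\ty$ and let $X := x(Q) \subset \U'$ be its preimage under the coordinate change. By definition \eqref{c-Monge-Ampere measure} and the identity $\p^{\tc}\tu(q) = \p^c u(x(q))$, we have $\p^{\tc}\tu(Q) = \p^c u(X)$ as subsets of $\R^n$, hence $|\p^{\tc}\tu|(Q) = \Leb{n}(\p^{\tc}\tu(Q)) = \Leb{n}(\p^c u(X)) = |\p^c u|(X)$. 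Now apply the hypothesis $|\p^c u| \in [\lambda,\Lambda]$ on $\U'$ to bound $|\p^c u|(X)$ between $\lambda\Leb{n}(X)$ and $\Lambda\Leb{n}(X)$, and finally relate $\Leb{n}(X)$ to $\Leb{n}(Q)$ via the change of variables $q \mapsto x(q)$: since $D_q x(q) = -(D^2_{xy}c(x(q),\ty))^{-1}$, the Jacobian factor $|\det D_q x(q)|$ lies in $[\,1/\gamma^+_c,\ \gamma^-_c\,]$ with the constants from \eqref{Jacobian bound} evaluated on $\U' \times V$. Combining, $|\p^{\tc}\tu|(Q) \le \Lambda \Leb{n}(X) \le \Lambda\gamma^-_c \Leb{n}(Q)$ and $|\p^{\tc}\tu|(Q) \ge \lambda \Leb{n}(X) \ge (\lambda/\gamma^+_c)\Leb{n}(Q)$, which is the first claim.

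For the second claim, the point is simply that the modified cost $\tc(q,y) = c(x(q),y) - c(x(q),\ty)$ has mixed Hessian $D^2_{qy}\tc(q,y) = (D_q x(q))^{T} D^2_{xy}c(x(q),y) = -(D^2_{xy}c(x(q),\ty))^{-T} D^2_{xy}c(x(q),y)$. Taking determinants gives $\det D^2_{qy}\tc(q,y) = \det D^2_{xy}c(x(q),y) / \det D^2_{xy}c(x(q),\ty)$, so $|\det(D^2_{qy}\tc)^{\pm1}|$ is a product of a factor bounded by $\gamma^{+}_c$ and one bounded by $\gamma^{-}_c$ (with the roles of $+$ and $-$ appropriately assigned for each sign), whence $\gamma^{\pm}_{\tc}(\U'_\ty \times V) \le \gamma^+_c \gamma^-_c$. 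Likewise, the operator-norm bound $\|(D^2_{qy}\tc)^{\pm1}\|_{L^\infty}$ is controlled by the product of $\|(D^2_{xy}c(x(q),\ty))^{\mp1}\|$ and $\|(D^2_{xy}c(x(q),y))^{\pm1}\|$, each bounded by the corresponding $\beta^{\pm}_c(\U'\times V)$, giving $\beta^{\pm}_{\tc} \le \beta^+_c \beta^-_c$.

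I do not expect a serious obstacle here; the corollary is essentially bookkeeping built on Theorem \ref{thm:apparently convex}. The one point requiring a little care is the Borel measurability and the change-of-variables step: one should note $x(\cdot)$ is a $C^3$ diffeomorphism (Definition \ref{D:cost exponential}), so it carries Borel sets to Borel sets and the area formula applies cleanly, and one must be careful that the inequalities $|\p^c u| \in [\lambda,\Lambda]$ are stated for subsets of $\U'$, which is exactly why the transformed bounds are asserted only on $\U'_\ty = -D_y c(\U',\ty)$ and the constants $\gamma^{\pm}_c$, $\beta^{\pm}_c$ are those of $\U' \times V$ rather than of all of $U \times V$. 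A second minor subtlety is that Theorem \ref{thm:apparently convex} gives $\p^{\tc}\tu(q) = \p^c u(x(q))$ pointwise in $q$, so taking unions over $q \in Q$ yields $\p^{\tc}\tu(Q) = \p^c u(x(Q))$ directly, with no null-set corrections needed.
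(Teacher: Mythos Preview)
Your proposal is correct and follows essentially the same approach as the paper: both arguments use the identity $\p^{\tc}\tu(q(X)) = \p^c u(X)$ from Theorem \ref{thm:apparently convex} together with the Jacobian bounds on the coordinate change $q \leftrightarrow x$ to transfer the Monge--Amp\`ere bounds, and then compute $D^2_{qy}\tc$ via the chain rule (using $D_q x(q) = -(D^2_{xy}c(x(q),\ty))^{-1}$) to bound $\gamma^\pm_\tc$ and $\beta^\pm_\tc$. The only difference is cosmetic: the paper writes $D^2_{qy}\tc = D^2_{xy}c \cdot D_q x$ without a transpose, reflecting a different index convention, but this is immaterial for the determinant and operator-norm estimates.
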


\begin{proof}
From the Jacobian bounds $|\det D_x q(x)| \in
[1/\gamma^-_c,\gamma^+_c]$ on $\U'$, we find $\Leb{n}(X)/
\gamma^-_c \le \Leb{n}(q(X)) \le \gamma^+_c \Leb{n}(X)$ for each
$X \subset \U'$. On the other hand,  Theorem \ref{thm:apparently
convex} asserts $\p^\tc \tu(q(X)) = \p^c u(X)$,  so the claim
$|\p^\tc \tu| \in [\lambda/\gamma^+_c,\Lambda\gamma^-_c]$ follows
from the hypothesis $|\p^c u| \in [\lambda,\Lambda]$, by
definition (\ref{c-Monge-Ampere measure}) and the fact that $q:\cl
\U \longrightarrow \cl \U_\ty$ from \eqref{q of x} is a
diffeomorphism; see \Bone. The bounds $\gamma^\pm_\tc \le
\gamma^+_c \gamma^-_c$ and $\beta^\pm_\tc \le \beta^+_c \beta^-_c$
follow from $D^2_{qy} \tc (q,y) = D^2_{xy} c(x(q),y) D_q x(q)$ and
$D_q x(q) = - D^2_{xy} c(x(q),\tilde y)^{-1}$.
\end{proof}

\subsection*{Affine renormalization}\label{SS:renormal}
%
{\blue We record here an observation that the $\tc$-Monge-Amp\`ere measure is invariant under an
affine renormalization. This {\RJM is potentially useful in applications, though}
we {\em do not} use this fact for the results of the present paper.}
{\blue For an affine
transformation $L:\R^n \to \R^n$,}
define
\begin{equation}\label{renormalized solution}
\tu^*(q)=|\det L|^{-2/n} \tu(Lq).
\end{equation}
Here $\det L$ denotes the Jacobian determinant of $L$, i.e. the determinant of the linear part of $L$.

\begin{lemma}[Affine invariance of $\tc$-Monge-Amp\`ere measure]
Assuming \Bzero--\Bone,
given a $\tc$-convex function $\tu: \U_\ty \longmapsto \R$ and affine bijection $L:\R^n \longmapsto \R^n$,
define the renormalized potential $\tu^*$ by \eqref{renormalized solution} and renormalized cost
\begin{equation}\label{renormalized cost}
\tc_*(q,y)=|\det L|^{-2/n} \tilde c(Lq,L^*y)
\end{equation}
using the adjoint $L^*$ to the linear part of $L$.  Then, for any
Borel set $\Q \subset \cl \U_\ty$,
\begin{align}\label{compare p var with p var *}
|\p \tu^*| (L^{-1} \Q) &=  |\det L|^{-1} |\p \tu| (\Q),
\\|\p^{\tilde c_*} \tu^*| (L^{-1} \Q) &=  |\det L|^{-1} |\p^\tc \tu| (\Q).
\label{renormalized cMA measure}
\end{align}
\end{lemma}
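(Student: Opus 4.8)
The plan is to establish the two identities \eqref{compare p var with p var *} and \eqref{renormalized cMA measure} by unwinding the definitions of the subdifferential and $\tc$-subdifferential under the affine change of variables $q \mapsto Lq$, and tracking how Lebesgue measure and the associated Monge--Amp\`ere measures transform.

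First I would prove \eqref{compare p var with p var *}, the (ordinary, linear-cost) statement, since it is the model for the $\tc$-version and isolates the linear-algebra content. Differentiating \eqref{renormalized solution} shows $D\tu^*(q) = |\det L|^{-2/n} L^* D\tu(Lq)$, and more generally $p \in \p\tu^*(q)$ if and only if $|\det L|^{2/n} (L^*)^{-1} p \in \p\tu(Lq)$; that is, writing $A := |\det L|^{2/n}(L^*)^{-1}$ for the linear part of the associated map on cotangent vectors, one has $\p\tu^*(L^{-1}\Q) = A^{-1}\,\p\tu(\Q)$ as subsets of $\R^n$. Then $|\p\tu^*|(L^{-1}\Q) = \Leb{n}(\p\tu^*(L^{-1}\Q)) = \Leb{n}(A^{-1}\p\tu(\Q)) = |\det A|^{-1}\Leb{n}(\p\tu(\Q))$, and a short computation gives $|\det A| = |\det L|^{2} \cdot |\det L|^{-1} = |\det L|$, whence $|\p\tu^*|(L^{-1}\Q) = |\det L|^{-1}|\p\tu|(\Q)$. (One should note in passing that $\tu^*$ is again convex up to a quadratic, so these subdifferentials are genuine Borel-measurable set maps, as in Lemma~\ref{L:cMA properties}.)

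Next I would prove \eqref{renormalized cMA measure} by the same strategy applied to the $\tc$-transform. The key structural fact is that the renormalized cost $\tc_*$ in \eqref{renormalized cost} is precisely the cost for which $\tu^*$ is the renormalized potential: from the definition of the $\tc$-transform \eqref{c-transform} and the substitutions $q = L^{-1}q'$, $y = (L^*)^{-1}y'$, one checks that $(\tu^*)^{\tc_*^*}(y) = |\det L|^{-2/n}\,(\tu^{\tc^*})\big((L^*)^{-1} \text{(scaled)}\,y\big)$ — i.e.\ $\tc_*$-convexity of $\tu^*$ follows from $\tc$-convexity of $\tu$ — and, crucially, that $(q',y') \in \p^{\tc_*}\tu^*$ if and only if $(Lq', \, (L^*)^{-1}\!\cdot\!(\text{scaled})\, y') \in \p^{\tc}\tu$. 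Thus $\p^{\tc_*}\tu^*$ is carried onto $\p^{\tc}\tu$ by a bijection which on the $y$-variable is (an affine rescaling of) the linear map with determinant of absolute value $|\det L^*|^{\pm1}$ times a power of $|\det L|^{-2/n}$; computing this Jacobian exactly as above yields $|\p^{\tc_*}\tu^*|(L^{-1}\Q) = \Leb{n}\big(\p^{\tc_*}\tu^*(L^{-1}\Q)\big) = |\det L|^{-1}\Leb{n}\big(\p^{\tc}\tu(\Q)\big) = |\det L|^{-1}|\p^{\tc}\tu|(\Q)$.

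The main obstacle I anticipate is purely bookkeeping: getting the powers of $|\det L|$ and the placement of $L$ versus $L^*$ consistently right in the $y$-variable of the $\tc_*$-subdifferential, and verifying cleanly that the map $(q',y') \mapsto (Lq', \ldots)$ really does intertwine the two contact sets (this amounts to checking the identity $\tu^*(q) + (\tu^*)^{\tc_*^*}(y) + \tc_*(q,y) = |\det L|^{-2/n}\big[\tu(Lq) + \tu^{\tc^*}(\cdot) + \tc(Lq,\cdot)\big]$ termwise). There is no analytic difficulty — the normalization exponent $2/n$ in \eqref{renormalized solution} is chosen exactly so that the two $|\det L|$ factors combine to give the clean $|\det L|^{-1}$ — so once the substitutions are organized the identities fall out, and one should only be careful to invoke \Bone\ to know that $q \mapsto -D_y c(x(q),\ty)$ and its affine reparametrizations remain diffeomorphisms so that all the set maps above are Borel.
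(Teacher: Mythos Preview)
Your approach is correct and matches the paper's: identify how the ordinary subdifferential and the $\tc$-subdifferential transform under $q\mapsto Lq$, then read off the Jacobian of the induced linear map on the image side. On your flagged bookkeeping point for \eqref{renormalized cMA measure}: there is no extra scaling on the $y$-variable, since the prefactor $|\det L|^{-2/n}$ multiplies the entire equation $\tu^*(q) + (\tu^*)^{\tc_*^*}(y) + \tc_*(q,y) = 0$ and drops out of the zero-set condition; the correspondence is simply $y\in\p^{\tc_*}\tu^*(q) \Longleftrightarrow L^*y\in\p^{\tc}\tu(Lq)$, so $\p^{\tc_*}\tu^*(L^{-1}\Q) = (L^*)^{-1}\p^{\tc}\tu(\Q)$ with Jacobian $|\det L^*|^{-1}=|\det L|^{-1}$.
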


\begin{proof}
From \eqref{subdifferential} we see $\bar p \in \p \tu(\bq)$ if
and only if $|\det L|^{-2/n} L^* \bar p \in \p \tu^*(L^{-1} \bq)$,
thus \eqref{compare p var with p var *} follows from $\p
\tu^*(L^{-1} \Q) = |\det L|^{-2/n} L^* \big(\p \tu (\Q)\big)$.
Similarly,  since \eqref{c-transform} yields $(\tu^*)^{\tilde
c^*_*}(y) = |\det L|^{-2/n} \tu^{\tilde c^*}(L^*y)$, we see $\by
\in \p^\tc \tu(\bq)$ is equivalent to $|\det L|^{-2/n} L^* \bar y
\in \p^{\tilde c_*} \tu^*(L^{-1} \bq)$ from
\eqref{c-subdifferential} (and Theorem \ref{thm:apparently
convex}), whence  $\p^{\tilde c_*} \tu^*(L^{-1} \Q) = |\det
L|^{-2/n} L^* \big(\p^{\tc} \tu^*(\Q)\big)$ to establish
\eqref{renormalized cMA measure}.
\end{proof}

As a corollary to this lemma,  we recover the affine invariance not only of the
Monge-Amp\`ere equation satisfied by $\tu(q)$ --- but also of the $\tc$-Monge-Amp\`ere equation
it satisfies --- under coordinate changes on $\V$
(which induce linear transformations $L$ on $T^*_\ty V$ and $L^*$ on $T_\ty V$):
for $q \in \U_\ty$,
$$
\frac{d|\p \tu^*|}{d\Leb{n}} (L^{-1} q) =  \frac{d|\p \tu|}{d\Leb{n}} (q)
\quad{\rm and}\quad
\frac{d|\p^{\tc_*} \tu^*|}{d\Leb{n}} (L^{-1} q) =  \frac{d|\p^\tc \tu|}{d\Leb{n}} (q).
$$

\section{Strongly $c$-convex interiors and boundaries not mixed by $\p^c \uu$}
\label{S:mapping}

The subsequent sections of this paper are largely devoted to ruling out exposed points
in $\U_\ty$ of {\YHK the set where the $\tc$-convex potential from
Theorem \ref{thm:apparently convex} takes its minimum.}
This current section rules out exposed points on the boundary of $\U_\ty$.
We do this by proving an important topological property of the (multi-valued) mapping
$\p^c \uu \subset \cl \U \times \cl \V$.
Namely, we show that the subdifferential $\p^c \uu$ maps interior points of
$\supp |\p^c u| \subset \cl U$ 
only to interior 
points of $\V$, under hypothesis \eqref{eq:cMA}, and conversely that
$\p^c \uu$ maps boundary points of $\U$ only to boundary points of
$\V$. This theorem may be of independent interest, and was required
by Figalli and Loeper to conclude their continuity result concerning
maps {\blue between {\RJM \em \YHK two dimensional} domains} which optimize
\Bthree\ costs \cite{FigalliLoeper08p}.

This section does not use the \Bthree\ assumption on the cost
function $c \in C^4(\cl \U \times \cl \V)$, but relies crucially on
the \emph{strong} $c$-convexity \Btwos\ of its domains $U$ and  $V$
(but importantly, not of $\supp |\partial^{c} u |$). No analog for
Theorem \ref{thm:bdry-inter} was needed by Caffarelli to establish
$C^{1,\alpha}$ regularity of convex potentials $\uu(x)$ whose
gradients optimize the classical cost $c(x,y) = - \<x,y\>$
\cite{Caffarelli92}, since in that case he was able to take
advantage of the fact that the cost function is smooth on the whole
of $\R^n$ to chase potentially singular behaviour to infinity.  (One
general approach to showing regularity of solutions for degenerate
elliptic partial differential equations is to exploit the
threshold-hyperbolic nature of the solution to try to follow either
its singularities or its degeneracies to the boundary,  where they
can hopefully be shown to be in contradiction with boundary
conditions;  the {\em degenerate} nature of the ellipticity
precludes the possibility of {\em purely local} regularizing
effects.)


\begin{theorem}[Strongly $c$-convex interiors and boundaries not mixed by $\p^c \uu$]
\label{thm:bdry-inter}
Let $c$ satisfy \Bzero--\Bone\ and 
$\uu=\uu^{c^*c}$ be a $c$-convex function (which implies $\p^c\uu(\cl \U)=\cl \V$),
and $\lambda>0$.
\begin{enumerate}
\item[(a)]
If $|\p^c\uu| \geq \lambda$ on $X \subset \cl U$ and $\V$
is strongly $c^*$-convex with respect to $X$,
then interior points of $X$ cannot be mapped by $\p^c\uu$ to boundary points of $\V$:
i.e.\ $(X \times \p V) \cap \p^c \uu \subset (\p X \times \p \V)$.

\item[(b)] If $|\p^c\uu| \leq \Lambda$ on $\cl \U$, and $\U$ is
strongly $c$-convex with respect to $\V$, then boundary points of
$\U$ cannot be mapped by $\p^c \uu$ into interior points of $\V$:
i.e.\ $\partial \U \times \V$ is disjoint from  $\p^c \uu$.
\end{enumerate}
\end{theorem}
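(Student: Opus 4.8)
The two statements (a) and (b) are $c^*/c$-duals of each other (exchange the roles of $x$ and $y$, and of $u$ and $u^{c^*}$, using that $|\partial^c u|\le\Lambda$ on $\cl\U$ translates into $|\partial^{c^*}u^{c^*}|\ge 1/\Lambda$ on $\cl\V$ by Lemma~\ref{L:cMA properties}(d)), so it suffices to prove (a). Fix $\tx\in\intr X$ and suppose for contradiction that $\ty\in\partial^c u(\tx)\cap\partial\V$. Working in the cost-exponential coordinates $p=-D_xc(\tx,y)\in\cl\V_{\tx}$ from $\tx$ (Definition~\ref{D:cost exponential}), strong $c^*$-convexity of $\V$ with respect to $\tx$ says $\V_{\tx}$ is a strongly convex body, so $\tp:=p(\ty)\in\partial\V_{\tx}$ is touched from outside by a sphere of radius $R$ enclosing $\V_{\tx}$. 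The plan is to use this interior-ball-from-outside geometry to show that the $c$-subdifferential $\partial^c u(\tx)$, which by Corollary~\ref{C:local-global} is $c^*$-convex with respect to $\tx$ (i.e.\ maps to a convex subset $K:=-D_xc(\tx,\partial^cu(\tx))\subset\cl\V_{\tx}$ containing $\tp$ on its boundary), is too ``thin'' near $\tp$ to support the lower mass bound $|\partial^c u|\ge\lambda$ in a full neighborhood of $\tx$.

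The core estimate is a volume/covering argument. Because $\tx$ is interior to $X$, there is a small ball $B_r(\tx)\subset X$ on which $|\partial^c u|\ge\lambda$, hence $\Leb n(\partial^c u(B_r(\tx)))\ge\lambda\,\Leb n(B_r(\tx))=\lambda\omega_n r^n$. I would show that $\partial^c u(B_r(\tx))$, expressed in the $p$-coordinates from $\tx$, lies within an $O(r)$-neighborhood of the convex set $K$ — this follows from the $C^1$-dependence of $-D_xc(x,\cdot)$ on $x$ and the fact that, as $x$ ranges over $B_r(\tx)$, the sets $\partial^c u(x)$ move by $O(r)$ in a suitable sense (semiconvexity of $u$ and the twist condition \Bone; compare the argument that $Du$ has modulus of continuity controlled by the Monge-Amp\`ere measure). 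On the other hand, since $K\subset\V_{\tx}$, $\tp\in\partial K\cap\partial\V_{\tx}$, and $\V_{\tx}$ admits an enclosing sphere of radius $R$ at $\tp$, the portion of $K$ — and therefore of its $O(r)$-neighborhood — lying within distance $\rho$ of $\tp$ has volume $O(\rho^{(n+1)/2}\cdot$ (lower-dimensional factors)$)$ for $\rho$ small, reflecting that $K$ is squeezed into a spherical cap. Choosing $\rho$ comparable to a fixed multiple of $r$ and letting $r\to0$, the cap volume bound $o(r^n)$ contradicts the lower bound $\lambda\omega_nr^n$, provided we can also confine $\partial^cu(B_r(\tx))$ to the relevant neighborhood of $\tp$ rather than letting it spread along the boundary.

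The main obstacle, and where care is needed, is precisely this last point: controlling \emph{where} in $\cl\V$ the set $\partial^c u(B_r(\tx))$ goes, i.e.\ ruling out that it escapes along $\partial\V_{\tx}$ away from $\tp$. The tool for this is Loeper's maximum principle \DASM\ (Theorem~\ref{T:DASM}) together with the $c^*$-convexity of contact sets (Corollary~\ref{C:local-global}): $\partial^cu(\tx)$ maps to the convex set $K$, and for nearby $x$ the set $\partial^cu(x)$ maps into a convex set close to $K$; by choosing $\tx$ to be, say, a point of differentiability or by an approximation one pins $K$ down. Then the quantitative convex-geometry input — a spherical cap of a strongly convex body of ``width'' $r$ in the normal direction has all but an $O(r^{1/2})$-fraction of its diameter suppressed in the tangential directions, hence volume $O(r)\cdot O(r^{1/2})^{n-1}$ — gives the decay. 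This is the same spirit as Theorem~\ref{T:ratio}, but here one only needs the elementary spherical-cap estimate since $\V_{\tx}$ is genuinely strongly convex. Assembling these pieces and being careful that all the $O(\cdot)$ constants depend only on $\|c\|_{C^2}$, the strong-convexity radius $R$, $n$, and $\lambda$ completes the contradiction and hence the proof of (a); (b) follows by duality as noted.
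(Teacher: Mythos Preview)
Your overall architecture---reduce (b) to (a) by duality via Lemma~\ref{L:cMA properties}(d), assume an interior point $\tx$ maps to a boundary point $\ty$, pass to cost-exponential coordinates, and derive a volume contradiction using strong convexity of $\V_{\tx}$---matches the paper's. But the mechanism you propose for the confinement step has two real problems.

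First, you invoke \DASM\ (Theorem~\ref{T:DASM}) and the $c^*$-convexity of contact sets (Corollary~\ref{C:local-global}) to pin down where $\partial^c u(B_r(\tx))$ lands. Both of these require \Bthree, which is \emph{not} among the hypotheses of the theorem; only \Bzero--\Bone\ are assumed. The paper stresses this point explicitly (``This section does not use the \Bthree\ assumption''), and indeed the result is needed downstream (e.g.\ by Figalli--Loeper) precisely in a form that does not presuppose \Bthree. So as written your argument would prove a weaker statement than the one claimed.

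Second, and independently, the assertion that ``$\partial^c u(B_r(\tx))$ lies in an $O(r)$-neighborhood of $K$'' is not available from semiconvexity and twist alone. Semiconvexity gives only upper semicontinuity of $\partial u$, with no rate; the rate you want is essentially the regularity you are ultimately trying to establish. Without a quantitative rate the single-parameter comparison $\lambda\omega_n r^n$ versus a cap volume does not close.

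The paper's fix for both issues is to replace your ball $B_r(\tx)$ by a \emph{cone} $E_{\theta,\e}$ of angle $\theta$ and height $\e$ pointing along the inward normal to $\V_{\tx}$ at $\tp$, and to use \emph{$c$-cyclical monotonicity} of $\partial^c u$ (a consequence of optimality requiring only \Bzero--\Bone) in place of \DASM. A second-order Taylor expansion of the monotonicity inequality $\tilde c(q,p)+\tilde c(\zero,\zero)\le \tilde c(q,\zero)+\tilde c(\zero,p)$ in the chosen coordinates yields $-\langle q,p\rangle \le C_0|q|^2|p|^2$, which for $q\in E_{\theta,\e}$ forces $p$ into an enlarged dual cone $E'_{\theta,C_0\e}$. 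Strong convexity then bounds $\Leb n(E'_{\theta,C_0\e}\cap\V_{\tx})\lesssim\theta^{n+1}$, while $\Leb n(E_{\theta,\e})\sim\e^n\theta^{n-1}$; sending $\theta\to 0$ with $\e$ fixed small gives the contradiction. The two free parameters $(\theta,\e)$ are what make the counting work without any a priori modulus for $\partial^c u$.
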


\begin{proof}
Note that when $X$ is open the conclusion of (a) implies $\p^c u$
is disjoint from $ X \times \p V$. We therefore remark that it
suffices to prove (a), since (b) follows from (a) exchanging the
role $x$ and $y$ and observing that $|\p^c\uu| \leq \Lambda$
implies $|\p^\cs\uu^\cs|\geq 1/\Lambda$ as in Lemma \ref{L:cMA
properties}(d).


Let us prove (a).
Fix any point $\tilde x$ in the interior of $X$, and
$\tilde y \in \p^c\uu(\tilde x)$. Assume by contradiction that
$\tilde y \in \p\V$.
{\blue The idea of proof is summarized in Figure~\ref{figcone}. We first fix appropriate coordinates.}
At $(\tilde x,\tilde y)$ we use \Bzero--\Bone\ to
define cost-exponential coordinates $(p,q)\longmapsto (x(q),y(p))$
by
\begin{eqnarray*}
p =& - D_x c(\tilde x, y(p)) + D_x c(\tilde x, \tilde y)  &\in T^*_\tx(U)\\
q =& D^2_{xy} c(\tilde x,\tilde y)^{-1}(D_y c(x(q), \tilde y) - D_y c(\tilde x, \tilde y)) &\in T_\tx(U)
\end{eqnarray*}
and define a modified cost and potential by subtracting null
Lagrangian terms:
\begin{eqnarray*}
\tilde c(q,p) &:=& c(x(q),y(p)) - c(x(p),\tilde y) - c(\tilde x, y(p)) \\
\tilde \uu(q) &:=& \uu(x(q)) + c(x(q),\tilde y).
\end{eqnarray*}
Similarly to Corollary \ref{C:Jacobian transform},
$|\partial^{\tilde c} \tilde u| \ge \tilde \lambda
:=\lambda/(\gamma^+_c \gamma^-_c)$, where $\gamma^\pm_c$ denote
the Jacobian bounds \eqref{Jacobian bound} for the coordinate
change. Note $(\tilde x,\tilde y) = (x(\zero),y(\zero))$
corresponds to $(p,q) = (\zero,\zero)$. Since $c$-segments with
respect to $\tilde y$ correspond to line segments in $\U_{\tilde
y} := -D_y c(U,\ty)$ we see $D_p \tilde c(q,\zero)$ depends
linearly on $q$, whence $D^3_{qqp} \tilde c(q,\zero)=0$; similarly
$c^*$-segments with respect to $\tilde x$ become line segments in
the $p$ variables, $D_q \tilde c(\zero, p)$ depends linearly on
$p$, $D^3_{ppq} c(\zero, p)=0$, and the extra factor $D^2_{xy}
c(\tilde x, \tilde y)^{-1}$ in our definition of $x(q)$ makes $-
D^2_{pq}\tilde c(\zero,\zero)$ the identity matrix (whence $q= -
D_p \tilde c(\zero, q)$ and $p = -D_q \tilde c(p,\zero)$ for all
$q$ in $\U_\ty= x^{-1}(U)$ and $p$ in $\V_\tx := y^{-1}(V))$.
We denote $X_\ty := x^{-1}(X)$ 
and choose orthogonal coordinates on $U$ which make $-\hat e_n$
the outer unit normal to $\V_\tx \subset T^*_\tx \U$ at $\tilde p
=\zero$.
Note that 
$V_\tx$ is strongly convex by hypothesis~(a).

In these variables, {\blue as in Figure~\ref{figcone}}, consider a small cone of height $\e$ and angle $\theta$
around the $-\hat e_n$ axis:
$$
E_{\theta,\e} := \left\{q \in \R^n \mid \Big| - \hat e_n -
\frac{q}{|q|}\Big| \le {\blue \sin \theta}, |q| \le \e \right\}
$$
Observe that, if $\theta,\e$ are small enough, then $E_{\theta,\e}
\subset  X_\ty$, and its measure is of order $\e^n\theta^{n-1}$.
Consider now a slight enlargement
$$
E'_{\theta,C_0\e}:=\Bigl\{p =(P,p_n) \in \R^n \mid p_n \le
\theta|p|+C_0\e|p|^2\Bigr\},
$$
of the polar dual cone, where $\e$ will be chosen sufficiently
small depending on the large parameter $C_0$ forced on us later.
\begin{figure}
\begin{center}
\centerline{\epsfysize=1.5truein\epsfbox{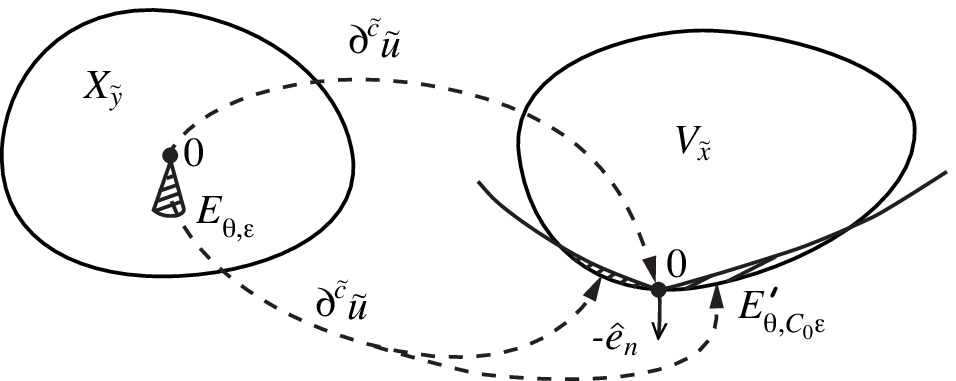}}
\caption{{\small If $\p^\tc \tu$ sends an interior point onto a boundary point,
by $\tc$-monotonicity of $\p^\tc \tu$ the small cone
$E_{\theta,\e}$ (with height $\e$ and opening $\theta$) has to be sent onto $E_{\theta,C_0\e}' \cap
\V_{\tx}$. Since $\Leb{n}(E_{\theta,\e}) \sim \theta^{n-1}$ for $\e>0$ small but fixed, while
$\Leb{n}(E_{\theta,C_0\e}' \cap \V_{\tx}) \lesssim \theta^{n+1}$}
(by the {\blue strong} convexity of $\tilde \V_{\tx}$), we get a
contradiction as $\theta \to 0$.}\label{figcone}
\end{center}
\end{figure}

The strong convexity ensures $\V_\tx$ is contained in a ball
$B_R(R\hat e_n)$ of some radius $R>1$ contained in the half-space
$p_n \ge 0$ with boundary sphere passing through the origin. As
long as $C_0\e < (6 R)^{-1}$ we claim $E'_{\theta,C_0\e}$
intersects this ball --- a fortiori $\V_\tx$ --- in a set whose
volume tends to zero like $\theta^{n+1}$ as $\theta \to 0$.
Indeed,  from the inequality 
$$
p_n
\le \theta \sqrt{|P|^2 + p_n^2} + \frac{1}{6}|P| + \frac{1}{3}p_n
$$
satisfied by any $(P,p_n) \in E'_{\theta,C_0\e} \cap B_R(R \hat e_n)$ we deduce
$p_n^2 \le |P|^2 (1 + 9 \theta^2)/(2-9\theta^2)$,
i.e.\ $p_n < |P|$ if $\theta$ is small enough.  Combined with the
further inequalities
$$
\frac{|P|^2}{2R} \le p_n
\le \theta \sqrt{|P|^2 + p_n^2} + C_0 \e |P|^2 + C_0 \e p_n^2
$$
(the first inequality follows by the strong convexity of
$\V_\tx$, and the second from the definition of $E_{\theta, C_\e}'$),
this yields $|P| \le 6 \theta \sqrt{2}$ 
and $p_n \le O(\theta^2)$ as $\theta \to 0$. Thus $\Leb n
(E_{\theta, C_\e}' \cap \V_\tx) \le C \theta^{n+1}$ for a
dimension dependent constant $C$, provided $C_0\e < (6R)^{-1}$.

The contradiction now will come from the fact that, thanks to the
$\tilde c$-cyclical monotonicity of $\partial^{\tilde c} \tilde
\uu$, if we first choose $C_0$ big and then we take $\e$
sufficiently small, the image of all $q \in E_{\theta,\e}$ by
$\p^{\tilde c} \tilde \uu$ has to be contained in
$E'_{\theta,C_0\e}$ for $\theta$ small enough. Since $\p^{\tilde
c} \tilde \uu\Big(\cl{X_\ty}\Big) \subset \cl{\V_\tx}$ this will
imply
$$
\e^n\theta^{n-1} \sim \tilde \lambda \Leb{n} (E_{\theta,\e}) \leq
|\p^{\tilde c}{\tilde \uu}|(E_{\theta,\e}) \leq \Leb{n} (\V_\tx
\cap E'_{\theta,C_0\e} ) \leq C\theta^{n+1},
$$
which gives a contradiction as $\theta \to 0$, for $\e>0$ small but fixed.

Thus all we need to prove is that, if $C_0$ is big  enough, then $\p^{\tilde c} \tilde \uu(E_{\theta,\e}) \subset
E'_{\theta,C_0\e}$ for any $\e$ sufficiently small. Let
$q \in E_{\theta,\e}$ and $p \in \p^{\tilde c}\tilde \uu(q)$.
{\blue Notice that }
\begin{align}\label{eq:c-monotone-ineq} \nonumber
& \int_0^1 ds \int_0^1 dt \,D^2_{qp} \tilde c(sq,tp)[q,p]\\ \nonumber
&=\tilde c(q,p) + \tilde c(\zero,\zero)- \tilde c(q,\zero)-c(\zero,p) \\
& \le 0
\end{align}
{\blue
where the last inequality is a consequence of $\tilde
c$-monotonicity of $\partial^{\tilde c}\tilde u$, see for instance
\cite[Definitions 5.1 and 5.7]{Villani09}.
Also note that
}
\begin{align}\label{eq:Dpqtildec} \nonumber
D^2_{qp} \tilde c(sq,tp)
= &D^2_{qp} \tilde c(\zero,tp) + \int_0^s ds' D^3_{qqp} \tilde c(s'q,tp)[q] \\ \nonumber
= &D^2_{qp} \tilde c(\zero,\zero) + \int_0^t dt' D^3_{qpp} \tilde c(\zero,t'p)[p] \\ \nonumber
& + \int_0^s ds' D^3_{qqp} \tilde c(s'q,\zero)[q] + \int_0^s ds' \int_0^t dt' D^4_{qqpp} \tilde c(s'q,t'p)[q,p]
\\
=& {\blue -I_n +  \int_0^s ds' \int_0^t dt' D^4_{qqpp} \tilde c(s'q,t'p)[q,p] }
\end{align}
{\blue since $D^3_{qpp} \tilde c(\zero,t'p)$ and $D^3_{qqp} \tilde
c(s'q,\zero)$ vanish in our chosen coordinates, and $-D^2_{pq}
\tilde c(\zero,\zero) {\RJM =I_n}$ is the identity matrix.} {\blue
Then, plugging \eqref{eq:Dpqtildec}  {\RJM into}
\eqref{eq:c-monotone-ineq}    yields}
\begin{align*}
- \langle q, p\rangle
&\le - \int_0^1 ds \int_0^1dt \int_0^s ds' \int_0^t dt' D^4_{qqpp} \tilde c(s' q,t'p)[q,q,p,p] \\
&\le C_0 |q|^2 |p|^2
\end{align*}
{\blue
for constant $C_0= \sup_{(q,p) \in X_{\tilde y} \times V_{\tilde x}} \| D^4_{qqpp} \tilde c \|$.
Since the term $-D^4_{qqpp} \tilde c$ is exactly the cross-curvature \eqref{MTW}, its tensorial nature implies $C_0$ depends only on $\|c\|_{C^4(U \times V)}$ and the bi-Lipschitz constants
$\beta^\pm_c$ from \eqref{bi-Lipschitzbound}.
}

From the above inequality and the definition of $E_{\theta,\e}$ we deduce
$$
p_n = \langle p, \hat e_n + \frac{q}{|q|}\rangle - \langle p,
\frac{q}{|q|}\rangle \le \theta |p| + C_0 \e |p|^2
$$
so $p \in E'_{\theta,C_0\e}$ as desired.
\end{proof}

\section{Alexandrov type estimates {\blue for $c$-convex functions}}\label{S:Alexandrov}

In this section we prove the key estimates for $c$-convex potential
functions $u$ which will eventually lead to the H\"older continuity
and injectivity of optimal maps. Namely, we extend Alexandrov type
estimates commonly used in the analysis of {\RJM convex solutions to
the Monge-Amp\`ere equation} with $c(x,y)= -\<x, y\>$, to {\RJM
$c$-convex solutions of the $c$-Monge-Amp\'ere} for general {\blue
\Bthree\ cost functions.} These estimates, Theorems \ref{thm:lower
Alex} and \ref{thm:estimate}, are of independent interest{\RJM :
they concern {\em sections} of $u$, i.e. the convex sub-level sets
of the modified potential $\tu$ of Theorem \ref{thm:apparently
convex}, and compare the range of values and boundary behaviour of
$\tu$ on each section with the volume of the section.} {\YHK
We describe them briefly before we begin the details.}

{\RJM Fix $p \in \R^n$, $p_0 \in \R$, and a positive symmetric
matrix $P>0$. It is elementary to see that the range of values taken
by the parabola $\tilde u(q) =q^t P q + p \cdot q +p_0$ on any
non-empty sub-level set $Q = \{\tu \le 0\}$ is determined by $\det
P$ and the volume of $Q$:
\begin{equation}\label{parabolic identity}
|\min_{q \in Q} \tu(q)|^n =  \left(\frac{\Leb{n}(Q)}{\Leb{n}(B_1)}\right)^2
\det P.
\end{equation}
Moreover,  the parabola tends to zero linearly as the boundary of
$Q$ is approached.  Should the parabola be replaced by a convex
function satisfying $\lambda \le \det D^2 \tu \le \Lambda$
throughout a fixed fraction of $Q$,  two of the cornerstones of
Caffarelli's regularity theory are that the identity
\eqref{parabolic identity} remains true --- up to a factor
controlled by $\lambda, \Lambda$ and the fraction of $Q$ on which
these bounds hold --- and moreover that $\tu$ tends to zero at a
rate no slower than $\dist_{\p Q}(q)^{1/n}$ as $q \to
\partial Q$. The present section is devoted to showing that under
\Bthree, similar estimates hold for level-set convex solutions $\tu$
of the $c$-Monge Amp\'ere equation, on sufficiently small sub-level
sets. Although the rate $\dist_{\p Q}(q)^{1/2^{n-1}}$ we obtain for
decay of $\tu$ near $\partial Q$ is probably far from optimal, it
turns out to be sufficient for our present purpose.}

 We {\RJM begin with a Lipschitz} estimate on the  cost
function $c$, {\blue which {\RJM turns} out to be useful}.

\begin{lemma}[\RJM Modified cost gradient direction is Lipschitz]\label{L:c estimate}
Assume \Bzero--\Btwo. {\RJM Fix $\ty \in \cl\V$.}
For $\tilde c \in C^3\big(\cl \U_\ty \times \cl\V\big)$ from Definition
\ref{D:cost exponential} and each  $q,\tilde q \in \cl \U_\ty$ and fixed target $y \in \cl \V$,
\begin{align}\label{slope compare}
|-D_q \tc(q,y)+D_q\tc(\tilde q,y)|
&\le  \frac{1}{\e_c}  | q - \tilde q|\,|D_q \tc(\tilde q,y)|,
\end{align}
where $\e_c$ is given by
$ 
\e_c^{-1} = 2 (\beta^+_c)^4 (\beta^-_c)^6 \| D^3_{xxy}c \|_{L^\infty(\U \times \V)}
$ 
in the notation \eqref{bi-Lipschitzbound}. {\RJM If $y \ne \ty$ so
neither gradient vanishes, then}
\begin{align}\label{slope compare 2}
\left|-\frac{D_q \tc(q,y)}{|D_q \tc(q,y)|}+\frac{D_q\tc(\tilde q,y)}{|D_q\tc(\tilde q,y)|}\right|
&\le  \frac{2}{\e_c}  | q - \tilde q|.
\end{align}
\end{lemma}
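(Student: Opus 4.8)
The plan is to reduce everything to a pair of elementary one-dimensional estimates for the modified cost $\tc$ along the segment joining $q$ to $\tq$, exploiting the special structure of $\tc$ revealed in Definition~\ref{D:cost exponential} and Theorem~\ref{thm:apparently convex}. First I would recall that in the cost-exponential coordinates, $D_y\tc(q,\ty) = -q$ (equation~\eqref{special geodesic}), so that differentiating in $q$ gives $D^2_{qy}\tc(q,\ty) = -I_n$. Consequently, writing $D_q\tc(q,y) = D_q\tc(q,y) - D_q\tc(q,\ty)$ (the second term vanishes since $\tc(q,\ty)\equiv 0$ by definition), I can express $D_q\tc(q,y)$ as an integral of $D^2_{qy}\tc$ along a $y$-path from $\ty$ to $y$. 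The point is that $D^2_{qy}\tc(q,\ty) = -I_n$ is \emph{independent of $q$}, so the $q$-dependence of $D_q\tc(q,y)$ is entirely governed by $D^3_{qqy}\tc$, which can be controlled in terms of $\|D^3_{xxy}c\|_{L^\infty}$ and the bi-Lipschitz constants $\beta^\pm_c$ via the chain rule $D^2_{qy}\tc(q,y) = D^2_{xy}c(x(q),y)D_qx(q)$ together with $D_qx(q) = -D^2_{xy}c(x(q),\ty)^{-1}$ and standard estimates for derivatives of matrix inverses.

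For \eqref{slope compare}: I would write
\[
-D_q\tc(q,y) + D_q\tc(\tq,y) = \int_0^1 D^2_{qq}\tc\big((1-s)\tq + sq,\,y\big)[q-\tq]\,ds
\]
and compare the integrand to the reference matrix $-D^2_{qq}\tc(\tq,\ty) = 0$ — wait, this vanishes — so more carefully I would compare the direction of $D_q\tc$ at $q$ and at $\tq$ by controlling how much $D^2_{qq}\tc$ can tilt $D_q\tc(\tq,y)$. The cleanest route is: since $D^2_{qy}\tc(q,\ty) = -I_n$ for all $q$, we have $D_q\big(D^2_{qy}\tc(q,\ty)\big) = 0$, i.e.\ $D^3_{qqy}\tc(q,\ty) = 0$; hence $D^2_{qq}\tc(q,y)$ itself (not just its difference in $q$) is an integral $\int$ of $D^3_{qqy}\tc(q,y')$ over $y'$ from $\ty$ to $y$, and along that same path $D^2_{qy}\tc$ integrates to $D_q\tc(q,y)$ starting from $0$. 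Matching these two integral representations with the same $y$-path, and using that $\|D^3_{qqy}\tc\|$ is bounded by a constant times $\|D^3_{xxy}c\|$ (the stated $\e_c^{-1}/2$), gives $|D^2_{qq}\tc(q,y)| \le \tfrac{1}{\e_c}|D_q\tc(q,y)|$ pointwise; plugging this into the fundamental-theorem-of-calculus expression for the difference $-D_q\tc(q,y)+D_q\tc(\tq,y)$, and absorbing the variation of $|D_q\tc|$ along the segment (which is itself lower-order by the same bound, at the cost of the factor $2$), yields \eqref{slope compare}.

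For \eqref{slope compare 2}, I would simply apply the elementary inequality $\big|\tfrac{a}{|a|} - \tfrac{b}{|b|}\big| \le \tfrac{2|a-b|}{\max(|a|,|b|)}$ (valid for nonzero vectors) with $a = -D_q\tc(q,y)$, $b = -D_q\tc(\tq,y)$: \eqref{slope compare} bounds $|a-b|$ by $\tfrac{1}{\e_c}|q-\tq|\,|b|$, so $\big|\tfrac{a}{|a|} - \tfrac{b}{|b|}\big| \le \tfrac{2}{|b|}\cdot\tfrac{1}{\e_c}|q-\tq|\,|b| = \tfrac{2}{\e_c}|q-\tq|$, which is exactly \eqref{slope compare 2}. (One should note $D_q\tc(\cdot,y)$ never vanishes on $\cl\U_\ty$ when $y\ne\ty$, since $D^2_{qy}\tc$ is invertible by \Bone\ and $D_q\tc(\cdot,\ty)\equiv 0$, so the division is legitimate.)

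The main obstacle I anticipate is \textbf{bookkeeping the powers of $\beta^\pm_c$} in passing from $\|D^3_{xxy}c\|_{L^\infty}$ to a bound on $\|D^3_{qqy}\tc\|$ and, relatedly, making the comparison of the two integral representations (the one for $D_q\tc(q,y)$ and the one for $D^2_{qq}\tc(q,y)$) genuinely rigorous rather than heuristic — in particular handling the fact that the natural object to integrate is $D^3_{qqy}\tc$ but one must be careful that the coordinate change $q\mapsto x(q)$ contributes its own $q$-derivatives (through $D_qx$ and its derivative), producing the fourth power $(\beta^+_c)^4$ and sixth power $(\beta^-_c)^6$ in $\e_c^{-1}$. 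Everything else — the fundamental theorem of calculus along segments, the elementary normalized-vector inequality, and the nonvanishing of $D_q\tc$ — is routine.
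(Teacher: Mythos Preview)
Your ingredients are the same as the paper's --- the vanishing of $D_q\tc(\cdot,\ty)$, control of $D^3_{qqy}\tc$ via the chain rule, and the bi-Lipschitz property of $y\mapsto -D_q\tc(\tq,y)$ --- but your organization introduces an unnecessary bootstrap that is not quite right as stated. You propose to first prove a pointwise bound $|D^2_{qq}\tc(q,y)|\le C_0|D_q\tc(q,y)|$, integrate along $[\tq,q]$, and then ``absorb the variation of $|D_q\tc|$ along the segment at the cost of the factor $2$''. That absorption step, taken literally, is a Gronwall argument and yields $|D_q\tc(q,y)-D_q\tc(\tq,y)|\le (e^{C_0|q-\tq|}-1)|D_q\tc(\tq,y)|$, which is \emph{exponential} rather than linear in $|q-\tq|$; the factor $2$ is justified only when $C_0|q-\tq|\le \ln 2$, whereas the lemma claims the linear bound for \emph{all} $q,\tq\in\cl\U_\ty$. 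You could salvage this by replacing the bootstrap with a direct bi-Lipschitz comparison $|D_q\tc(q',y)|\le \beta^+_\tc\beta^-_\tc\,|D_q\tc(\tq,y)|$ (both are comparable to $\dist(y,\ty)$), but then the multiplicative constant is $(\beta^+_c\beta^-_c)^2$, not $2$.

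The paper sidesteps the whole issue. It fixes $p$-coordinates $p=-D_q\tc(\tq,\cdot)$ centered at $\tq$ (so $p(\ty)=0$ and $|p(y)|=|D_q\tc(\tq,y)|$), then bounds the double difference
\[
\big|D_q\tc(q,y)-D_q\tc(\tq,y)-D_q\tc(q,\ty)+D_q\tc(\tq,\ty)\big|
\;\le\;\|D^2_{qq}D_p\tc\|\,|q-\tq|\,|p(y)-p(\ty)|
\]
by a single mean-value estimate in $(q,p)$ over the convex domain $\U_\ty\times V_\tq$ (this is where \Btwo\ enters). The last two terms on the left vanish, and $|p(y)-p(\ty)|=|D_q\tc(\tq,y)|$ appears directly on the right --- no bootstrap, no Gronwall. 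Your treatment of \eqref{slope compare 2} via $\big|\tfrac{a}{|a|}-\tfrac{b}{|b|}\big|\le \tfrac{2|a-b|}{|b|}$ is exactly what the paper does (written out as a triangle inequality).
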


\begin{proof}
For fixed $\tq \in \cl \U_\ty$ introduce the $\tilde
c$-exponential coordinates $p(y) = - D_q \tilde c(\tq, y)$. The
bi-Lipschitz constants \eqref{bi-Lipschitzbound} of this
coordinate change are estimated by $\beta^\pm_\tc \le \beta^+_c
\beta^-_c$ as in Corollary~\ref{C:Jacobian transform}.
Thus
\begin{align*}
\dist(y,\ty)
&\leq \beta^-_\tc |-D_q \tc(\tq, y) + D_q \tc(\tq, \ty )|\\
&\leq   \beta_c^+ \beta_c^-  |D_q \tc(\tilde q, y)|.
\end{align*}
where $\tc(q,\ty) \equiv 0$ from Definition \ref{D:cost
exponential} has been used. Similarly, noting the convexity \Btwo\
of $\V_\tq :=p(\V)$,
\begin{align*}
|-D_q \tc(\tilde q,y)+D_q\tc(q,y)|
&=|-D_q\tc(\tilde q,y)+D_q\tc(q,y)+D_q\tc(\tilde q,\ty)-D_q\tc(q,\ty)|
\\&\leq \|D^2_{qq} D_p \tc\|_{L^\i(\U_\ty \times \tilde \V_\tq)}  |\tq - q| |p(y)-p(\ty)|
\\&\leq \|D^2_{qq} D_y \tc\|_{L^\i(\U_\ty \times \V)} (\beta^-_c \beta^+_c)^2|\tq - q| \dist(y,\ty)
\end{align*}
Then \eqref{slope compare} follows since $|D^2_{qq} D_y \tc| \le( (\beta^-_c)^2 +
\beta^+_c (\beta^-_c)^3))  |D^2_{xx} D_y c| \le 2 \beta^+_c
(\beta^-_c)^3 |D^2_{xx} D_y c| $ . (The last inequality
follows from $\beta^{+}_{c}\beta^-_c \ge 1$.)

Finally, to prove \eqref{slope compare 2} we simply use the triangle inequality and \eqref{slope compare} to get
\begin{align*}
\left|-\frac{D_q \tc(q,y)}{|D_q \tc(q,y)|}+\frac{D_q\tc(\tilde q,y)}{|D_q\tc(\tilde q,y)|}\right|
\leq & \left|-\frac{D_q \tc(q,y)}{|D_q \tc(q,y)|}+\frac{D_q \tc(q,y)}{|D_q\tc(\tilde q,y)|}\right|
+ \left|-\frac{D_q \tc(q,y)}{|D_q\tc(\tilde q,y)|}+ \frac{D_q\tc(\tilde q,y)}{|D_q\tc(\tilde q,y)|}\right|\\
=& \frac{\bigl|- |D_q\tc(q,y)|+|D_q\tc(\tilde q,y)|\bigr|}{|D_q\tc(\tilde q,y)|}
+ \frac{|-D_q \tc(q,y)+D_q\tc(\tilde q,y)|}{|D_q\tc(\tilde q,y)|}\\
\leq& \frac{2}{\e_c}| q - \tilde q|.
\end{align*}
\end{proof}

\subsection{Alexandrov type lower bounds}
In this subsection we prove {\blue one of the key estimates of this paper, namely,} a bound on the infimum on a $c$-convex function inside a section in terms of the measure of the section
and the mass of the $c$-Monge-Amp\`ere measure inside that section.
{\blue The corresponding result for the affine cost function is very easy, but in  our case the unavoidable nonlinearity of the cost function requires new ideas.


\begin{theorem}[Alexandrov type lower bounds]\label{thm:lower Alex}
Assume \Bzero--\Bthree, define
$\tilde c \in C^3\big(\cl\U_\ty \times \cl\V\big)$ as in
Definition \ref{D:cost exponential}, and let $\gamma^\pm_\tc=\gamma^\pm_\tc(\Q \times \V)$ be as in (\ref{Jacobian bound}).
Let $\tu:\cl \U_\ty \longmapsto \R$ be a $\tilde c$-convex function as in Theorem \ref{thm:apparently convex},
and let $\Q := \{ \tu \le 0\}\subset \cl \U_\ty$. Note that $\Q\subset \R^n$ is convex. Let $\e_c>0$ small be given
by Lemma~\ref{L:c estimate}.
Finally, let $E$ be the ellipsoid given by John's Lemma (see \eqref{E:well-centered}),
and assume there exists a small ellipsoid $E_\delta=x_0+\delta \rdot E \subset \Q$, with $\delta \leq \min\{1,\e_c/(4\diam(E))\}$ and
$|\partial^{\tc} \tu | \ge \lambda>0$ inside $E_\delta$. We also assume that $E_\delta \subset B \subset 4 \rdot B \subset U_{\bar y}$,
where $B$, $4 \rdot B$ are a ball and its dilation.
Then there exists a constant $C(n)$, depending only on the dimension, such that
\begin{equation}
\label{eq:Alex lower}
\Leb{n}(\Q)^2  \leq C(n) \frac{\gamma^-_\tc}{\delta^{2n}\lambda }\,  |\inf_\Q \tu|^n.
\end{equation}
\end{theorem}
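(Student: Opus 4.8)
The plan is to reduce the statement to the classical Alexandrov estimate for convex functions by using the level-set convexity from Theorem~\ref{thm:apparently convex} together with the Lipschitz control on the modified cost gradient from Lemma~\ref{L:c estimate}. Write $m := |\inf_\Q \tu| = -\inf_\Q \tu$; we may assume $m>0$, and by subtracting a constant we think of $\tu$ as vanishing on $\partial \Q$ and attaining $-m$ at some point $q_* \in \intr \Q$. The key geometric idea is that $\tc$-subdifferentials control ordinary subgradients up to a multiplicative factor: by Lemma~\ref{L:c estimate}, for $q \in E_\delta$ and $y \in \partial^{\tc}\tu(q)$, the vector $-D_q \tc(q,y)$ differs in direction from $-D_q\tc(\tilde q,y)$ (for a reference $\tilde q$, say the center $x_0$ of $E_\delta$) by an amount at most $\tfrac{2}{\e_c}|q-\tilde q| \le \tfrac{2}{\e_c}\cdot\delta\diam(E) \le \tfrac12$ using the hypothesis $\delta \le \e_c/(4\diam E)$. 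This ``almost-radial'' behaviour of the modified cost gradient over $E_\delta$ is what lets one compare the $\tc$-Monge–Amp\`ere mass $|\partial^{\tc}\tu|(E_\delta) \ge \lambda \Leb{n}(E_\delta) = \lambda \delta^n \Leb{n}(E)$ with the ordinary subdifferential image of a suitable convex comparison function.

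The main steps, in order: (i) Build the convex comparison function. Using level-set convexity of $\tu$ and the condition $E_\delta \subset B \subset 4\rdot B \subset U_{\bar y}$, construct a convex ``cone-like'' function $h$ on $\Q$ with $h = 0$ on $\partial\Q$, $h(q_*) = -m$ (or $\ge -Cm$), and whose graph lies above that of a genuine convex function dominating $\tu$ where it matters; the dilation buffer $4\rdot B$ is there to guarantee that $\partial^{\tc}\tu$-images of points in $E_\delta$ stay inside the region where the cost estimates of Lemma~\ref{L:c estimate} apply and to run the argument entirely inside $U_{\bar y}$. (ii) Transfer the mass bound: show $\Leb{n}\big(\partial h(E_\delta)\big) \gtrsim \lambda\,\delta^n\Leb{n}(E)/\gamma^+_\tc$ or the analogous inequality with $\gamma^-_\tc$, by composing the identity $\partial^{\tc}\tu(q) = \partial^c u(x(q))$ with the Jacobian bounds and the directional comparison just described, which turns a $\tc$-normal map into an ordinary normal map with controlled Jacobian. (iii) Apply the classical Alexandrov lower bound to $h$: for a convex function vanishing on the boundary of a bounded convex set $\Q$ one has $\Leb{n}(\partial h(\Q)) \Leb{n}(\Q) \gtrsim_n |\inf_\Q h|^n$ — more precisely $m^n \ge c(n)\Leb{n}(\partial h(q_*))\,\Leb{n}(\Q)/\Leb{n}(\Q)$ type inequalities, combined with John's Lemma (Lemma~\ref{L:John}) applied to $\Q$ to handle eccentricity. (iv) Chase constants: combine (ii) and (iii), absorb all purely dimensional factors (and the John's-lemma factor $n$) into $C(n)$, keep $\gamma^-_\tc$, $\delta^{2n}$, and $\lambda$ explicit, and arrive at $\Leb{n}(\Q)^2 \le C(n)\,\gamma^-_\tc\,\delta^{-2n}\lambda^{-1}\,m^n$.

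I expect the main obstacle to be step (i)–(ii): turning the weak hypothesis ``$|\partial^{\tc}\tu|\ge\lambda$ on $E_\delta$'' — which is a statement about the generalized cost-subdifferential, a nonlinear and a priori non-convex object — into a clean lower bound on the Lebesgue measure of the image of $E_\delta$ under the \emph{ordinary} subdifferential of an honest convex function, without losing more than the stated $\gamma^-_\tc$ factor. The subtlety is that $\tu$ itself is only level-set convex, not convex, under \Bthree\ (genuine convexity needs \Bfour), so one cannot simply take $h=\tu$; one must use the convex sub-level set structure plus the radial control from Lemma~\ref{L:c estimate} to manufacture a convex minorant/comparison whose normal map still sees the full $\lambda$-mass. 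The smallness conditions on $\delta$ (both $\delta\le 1$ and $\delta\le\e_c/(4\diam E)$) and the $4\rdot B\subset U_{\bar y}$ buffer are precisely the hypotheses that make this manufacture possible, and verifying that they suffice — i.e. that the direction of $-D_q\tc(q,y)$ varies by at most a factor $2$ over $E_\delta$, so that $\tc$-segments emanating from $E_\delta$ behave like rays — is the technical heart of the argument.
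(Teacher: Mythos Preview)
Your plan differs from the paper's, and the obstacle you flag in steps (i)--(ii) --- constructing a convex comparison $h$ whose ordinary subdifferential captures the $\lambda$-mass of $\partial^\tc\tu$ --- is never carried out, because the paper avoids it entirely. The paper works directly with the $\tc$-subdifferential image: after translating so that $E_\delta$ is centered at $\zero$, set $\mathcal{C}_0 := -D_q\tc\big(\zero,\, \partial^\tc\tu(\tfrac12\rdot E_\delta)\big)$. The Jacobian bound gives $\Leb{n}\big(\partial^\tc\tu(\tfrac12\rdot E_\delta)\big) \le \gamma^-_\tc \Leb{n}(\mathcal{C}_0)$, while the mass hypothesis gives $\Leb{n}\big(\partial^\tc\tu(\tfrac12\rdot E_\delta)\big) \ge \lambda\, 2^{-n}\Leb{n}(E_\delta)$. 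The heart of the argument is then the containment $\mathcal{C}_0 \subset \{p : \|p\|_{E_\delta}^* \le C(n)\,|\inf_\Q\tu|\}$, supplied by the \emph{dual norm estimate} Lemma~\ref{lemma:bound dual norm} (applied with $\KK=E_\delta$, $\rho=\tfrac12$): for $q\in\rho\rdot\KK$ and $y\in\partial^\tc\tu(\rho\rdot\KK)$ one has $\|{-}D_q\tc(q,y)\|_\KK^* \le C_*(n,\rho)\,|\inf_\KK\tu|$. Its hypothesis $\diam(\KK)\le\e_c/4$ is exactly where the smallness $\delta\le\e_c/(4\diam E)$ enters, confirming your intuition about the role of that constraint. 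The proof of that lemma does use the level-set convexity of the mountains $q\mapsto -\tc(q,y)+C_y$ and the Lipschitz control of Lemma~\ref{L:c estimate}, but never builds a global convex function; the bound is obtained geometrically by tracing the half-line from $q$ in the gradient direction to where the supporting mountain vanishes and comparing with the convex geometry of $\KK$. Combining the two bounds on $\Leb{n}(\mathcal{C}_0)$ with $\Leb{n}(E_\delta)=\delta^n\Leb{n}(E)$ and $\Leb{n}(\Q)\le n^n\Leb{n}(E)$ yields~\eqref{eq:Alex lower}.

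A second, more concrete issue: your step (iii) has the inequality direction backwards. The classical Alexandrov maximum principle $|\inf_\Q h|^n \lesssim_n \Leb{n}(\partial h(\Q))\,\Leb{n}(\Q)$ is the tool one pairs with an \emph{upper} bound on the Monge--Amp\`ere measure; here the hypothesis is a \emph{lower} bound on $|\partial^\tc\tu|$, so the missing ingredient is the opposite containment --- that the gradient image sits inside a set of volume $\lesssim |\inf_\Q\tu|^n/\Leb{n}(E_\delta)$. That is precisely what the dual-norm estimate delivers (the set $\{\|p\|_{E_\delta}^*\le C\,|\inf_\Q\tu|\}$ is an explicit ellipsoid of that volume), and for this direction an auxiliary convex function would gain you nothing even if you could build one.
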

}
The proof of the theorem above, {\blue which is given in the last part of this subsection,}
relies on the following result, which will also play a key role in Section \ref{S:H\"older}
(see \eqref{bound:gradient}) to show the
engulfing property and obtain {\blue H\"older continuity of optimal maps.} 

\begin{lemma}[\blue Dual norm estimates]
\label{lemma:bound dual norm}
With the same notation {\blue and assumptions} as in Theorem \ref{thm:lower Alex},
let $\KK\subset \Q$ be {\blue an open} convex set such that $\diam(\KK)\leq \e_c/4$.
{\blue We also assume that {\red there exists a ball $B$ such that}
$\KK \subset B \subset 4 \rdot B \subset U_{\bar y}$, where $4 \rdot B$ denotes the dilation of $B$
by a factor $4$ with respect to its center. (This  assumption is to locate all relevant points
inside the domain where  the assumptions on $\tilde c $ hold.)}
Up to a translation, assume that the ellipsoid associated to $\KK$
by John's Lemma (see \ref{E:well-centered}) is centered at the origin $\zero$,
and for any $\rho\in (0,1)$ let $\rho \rdot \KK$ denote the dilation of $\KK$ with respect to the origin.
Moreover, let {\blue $\|\cdot\|_\KK^*$ denote} the ``dual norm'' associated to $\KK$, that is
\begin{equation}
\label{eq:dual norm}
\|v\|_\KK^*:= \sup_{w \in \KK} w \cdot v.
\end{equation}
Then, for any $\rho<1$ {\RJM setting $C_*(n,\rho):=\frac{8n}{(1-\rho)^2}$ implies}
\begin{equation}
\label{eq:q bound} \|-D_q\tc(q,y)\|_\KK^*\leq C_*(n,\rho) |\inf_\KK
\tu| \qquad \forall\, q \in \rho \rdot \KK,\, y \in \p^\tc
\tu(\rho\rdot \KK).
\end{equation}
\end{lemma}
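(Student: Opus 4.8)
The plan is to run a $\tilde c$-version of Caffarelli's argument that the gradient of a convex function on a sub‑level set is controlled by the depth of that set. Fix $q\in\rho\rdot\KK$ and $y\in\p^\tc\tu(\rho\rdot\KK)$, and pick $q_0\in\rho\rdot\KK$ with $y\in\p^\tc\tu(q_0)$; write $c_0:=|\inf_\KK\tu|\ge 0$, which makes sense since $\tu\le 0$ on $\KK\subset\Q=\{\tu\le0\}$. Introduce the auxiliary function $g(q'):=\tc(q_0,y)-\tc(q',y)$, so that $g(q_0)=0$ and $\nabla g(q')=-D_q\tc(q',y)$; thus $\|{-D_q\tc(q,y)}\|^*_\KK=\|\nabla g(q)\|^*_\KK$ is exactly what we must bound. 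From the $\tilde c$-convexity $\tu=\tu^{\tilde c^*\tilde c}$ of Theorem~\ref{thm:apparently convex} and the definitions \eqref{c-transform}--\eqref{c-subdifferential} applied to the modified cost, $y\in\p^\tc\tu(q_0)$ means $\tu(q')\ge \tu(q_0)+g(q')$ for all $q'\in\cl\U_\ty$, with equality at $q_0$. Evaluating this on $\KK$ and using $\tu\le 0$ there together with $\tu(q_0)\ge\inf_\KK\tu=-c_0$ gives the key bound $g\le c_0$ on $\KK$. If $\nabla g(q_0)=0$ then \eqref{slope compare} forces $\nabla g\equiv 0$ throughout $\KK$, so $-D_q\tc(q,y)=0$ and \eqref{eq:q bound} is trivial; hence assume $m_0:=|\nabla g(q_0)|>0$.

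The next step is to exploit the fact that, on a set of diameter at most $\e_c/4$, the gradient field $\nabla g=-D_q\tc(\cdot,y)$ is \emph{almost constant}. Applying \eqref{slope compare} with base point $q_0$ and using $\diam\KK\le\e_c/4$ gives $|\nabla g(q')-\nabla g(q_0)|\le\tfrac14 m_0$ for all $q'\in\KK$ (so $\tfrac34 m_0\le|\nabla g(q')|\le\tfrac54 m_0$ there), and integrating along the segment from $q_0$ to $q'\in\KK$ (which lies in $\KK$ by convexity) yields the linearization estimate $|g(q')-\langle\nabla g(q_0),q'-q_0\rangle|\le\tfrac18 m_0|q'-q_0|$ on $\KK$. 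In particular $|\nabla g(q)-\nabla g(q_0)|\le\tfrac14 m_0$, so $\|\nabla g(q)\|^*_\KK$ differs from $N_0:=\|\nabla g(q_0)\|^*_\KK$ by at most $\tfrac14 m_0$ times the circumradius of $\KK$, which by John's Lemma~\ref{L:John} is at most $\tfrac14 n m_0$ times the largest semi-axis of the ellipsoid $E$ of \eqref{E:well-centered}; so it suffices to bound $N_0$ and then absorb this perturbation into the dimensional constant.

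To bound $N_0$ I would use the elementary geometry of dilations about the John centre. Since $E$ is centred at the origin and $q_0\in\rho\rdot\KK$ we have $q_0/\rho\in\KK$, hence $\langle\nabla g(q_0),q_0\rangle\le\rho N_0$; and if $w^*\in\cl\KK$ realizes $\langle\nabla g(q_0),w^*\rangle=N_0$, then the point $z:=q_0+(1-\rho)(w^*-q_0)=\rho q_0+(1-\rho)w^*$ lies in $\cl\KK$, on the segment $[q_0,w^*]$, with $|z-q_0|=(1-\rho)|w^*-q_0|\le(1-\rho)\diam\KK\le(1-\rho)\e_c/4$. Combining $g(z)\le c_0$, the linearization estimate of the previous paragraph at $q'=z$, and $\langle\nabla g(q_0),z-q_0\rangle=(1-\rho)(N_0-\langle\nabla g(q_0),q_0\rangle)\ge(1-\rho)^2 N_0$, and estimating the nonlinear remainder using $E\subseteq\KK\subseteq nE$ (to weigh competitor lengths against the John ellipsoid) together with $\diam\KK\le\e_c/4$ (to make the remainder only a fraction of the leading term), one arrives at an inequality of the form $(1-\rho)^2 N_0\le C(n)\,c_0$. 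Feeding this back into the perturbation estimate from the second step then produces $\|{-D_q\tc(q,y)}\|^*_\KK\le \dfrac{8n}{(1-\rho)^2}\,c_0=C_*(n,\rho)|\inf_\KK\tu|$, which is \eqref{eq:q bound}. The auxiliary hypothesis $\KK\subseteq B\subseteq 4\rdot B\subseteq U_{\bar y}$ is used only to keep $z$, $w^*$ and the joining segments inside the region where $\tc$ and its derivatives --- hence \eqref{slope compare}--\eqref{slope compare 2} --- are valid.

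The main obstacle is precisely the remainder estimate in the third step: because $\tilde c$ is genuinely nonlinear, $g$ is not affine, and the discrepancy between $g$ and its tangent plane at $q_0$ --- measured over a section $\KK$ which a priori may be highly eccentric --- must be dominated by the leading linear term $(1-\rho)^2 N_0$. This is exactly why the diameter is restricted to $\e_c/4$ (so that, by Lemma~\ref{L:c estimate}, across $\KK$ the direction of $\nabla g$ rotates by at most one half and its length changes by at most a quarter), and why every length must be compared against the John ellipsoid of $\KK$ rather than against $\KK$ itself, so that the eccentricity of $\KK$ is paid for only by the harmless factor $n$ rather than being charged against the estimate. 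Getting the book-keeping of these competing quantities to close with the stated constant $8n/(1-\rho)^2$ is the delicate part; everything else is bookkeeping on the support inequality and Lemma~\ref{L:c estimate}.
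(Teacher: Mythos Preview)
Your setup and the bound $g\le c_0$ on $\KK$ are correct, as is the use of \eqref{slope compare} to make $\nabla g$ nearly constant on $\KK$. The gap is in the third step, where you propose to absorb the linearization remainder $\tfrac18 m_0|z-q_0|$ (and, in the perturbation step, $\tfrac14 m_0$ times the circumradius of $\KK$) into $(1-\rho)^2 N_0$ using $E\subset\KK\subset nE$. John's lemma does \emph{not} bound the eccentricity of $\KK$: the ellipsoid $E$ may itself have arbitrarily large aspect ratio. If $\nabla g(q_0)$ points in the thin direction of $\KK$ then $N_0\sim m_0\cdot(\text{thin width})$, whereas $|w^*-q_0|$ can be of order $\diam\KK$ whenever $q_0$ sits near an extreme point of $\rho\rdot\KK$ far from $w^*$; the ratio $m_0|w^*-q_0|/N_0$ is then comparable to the unbounded aspect ratio of $\KK$, and no dimensional constant closes the estimate. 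Concretely, let $\KK$ be the triangle with vertices $(0,\pm1)$, $(\epsilon,0)$, take $\nabla g(q_0)=m_0 e_1$ and $q_0$ the vertex of $\rho\rdot\KK$ nearest $(0,1)$: then $N_0=m_0\epsilon$ but $|w^*-q_0|\sim 1$, so your remainder swamps the main term as $\epsilon\to0$. The perturbation step from $q_0$ to $q$ fails for the identical reason.

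What the paper uses instead is the \emph{level-set convexity} of $m_y(\cdot):=-\tc(\cdot,y)+\tc(q',y)+\tu(q')$ supplied by Theorem~\ref{thm:apparently convex}, not merely approximate affinity. One follows the ray from $q$ (respectively $q'$) in direction $v$ until it meets the convex hypersurface $\{m_y=0\}$, which contains $\KK$ in its interior --- \emph{not} until it meets $\partial\KK$. At the hitting point the outward normal is $-D_q\tc(\cdot,y)$ evaluated there, which by \eqref{slope compare 2} makes angle $O(\diam\KK/\e_c)$ with $v/|v|$; since the tangent hyperplane there separates the hitting point from $\KK$, one reads off a lower bound on the travel distance of order $(1-\rho)\|v\|^*_\KK/|v|$, \emph{independent of the shape of $\KK$}. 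This is Lemma~\ref{lem:line from 0}. A second geometric lemma (Lemma~\ref{lem:line from z}) compares the parallel rays from $q$ and from $q'$ at the cost of the factor $(1-\rho)/(2n)$, which is where the dimensional constant enters and which replaces your perturbation step. Integrating $\langle -D_q\tc(w,y),\,v/|v|\rangle\ge|v|/2$ along the segment (again from \eqref{slope compare}) then yields $c_0\ge\frac{(1-\rho)^2}{8n}\|v\|^*_\KK$. The hypothesis $\KK\subset B\subset 4\rdot B\subset U_{\ty}$ is needed precisely because the hitting point on $\{m_y=0\}$ may lie outside $\KK$; its role is not merely to keep $z$ and $w^*$ inside, since those already lie in $\cl\KK$.
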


\begin{figure}
\begin{center}
\centerline{\epsfysize=2.0truein\epsfbox{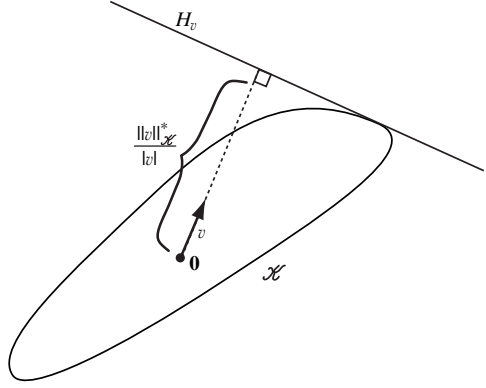}}\caption{{\small
The quantity $\frac{\|v\|_\KK^*}{|v|}$ represents the distance between the origin and
the supporting hyperplane orthogonal to $v$.}}\label{fig2bis-A}
\end{center}
\end{figure}

Before proving the above lemma, let us explain the geometric
intuition behind the result: for $v \in \R^n \setminus\{\zero\}$,
let $H_v$ be the supporting hyperplane to $\KK$ orthogonal to $v$
and contained inside the half-space $\{q \mid q\cdot v >0\}$. Then
$$
\dist(\zero,H_v)=\sup_{w \in \KK} w \cdot \frac{v}{|v|}=\frac{\|v\|_\KK^*}{|v|},
$$
see Figure \ref{fig2bis-A}, and
Lemma \ref{lemma:bound dual norm}
states that, for all $v=-D_q\tc(q,y)$, with $q \in \rho \rdot \KK$ and $y \in \p^\tc \tu(\rho \rdot \KK)$,
$$
\dist(\zero,H_v) |v| \leq C_*(n,\rho) |\inf_\KK \tu|.
$$
Hence, roughly speaking, \eqref{eq:q bound} is just telling us that
the size of gradient at a point inside $\rho \rdot \KK$ times the
width of $\KK$ in the direction orthogonal to the gradient is
controlled, up to a factor $C_*(n,\rho)$, by the infimum of $\tu$
inside $\KK$ (all this provided the diameter of $\KK$ is
sufficiently small). This would be a standard estimate if we were
working with convex functions and $c$ was the standard quadratic
cost in $\R^n$, but in our situation the proof is {\RJM both subtle
and involved}.

{\blue   To prove \eqref{eq:q bound}}
let us start observing that, since $|w| \leq \diam(\KK)$ for $w \in \KK$, the following {\blue useful}  inequality holds:
\begin{equation}
\label{eq:comparison norms}
\|v\|_\KK^* \leq \diam(\KK)\,|v| \qquad \forall \,v \in \R^n.
\end{equation}
{\blue  We will need} two preliminary results.
\begin{lemma}

\label{lem:line from 0}
{\blue With the same notation and assumptions as in Lemma \ref{lemma:bound dual norm}},
let {\blue $q \in \rho \rdot \KK$,
$y \in \V$,}
and let $m_y$ be a function of the form
$m_y:=-\tc (\cdot,y) + C_y$ for some constant $C_y \in \R.$
Set $v :=-D_q m_y(q)$,
assume that $\KK \subset \{m_y < 0\}$, and
let $\q_0$ denote the intersection of the half-line
$\ell_v^\q:=\q+\R^+v=\{\q+tv\,|\,t >0 \}$ with $\{m_y=0\}$ (assuming it exists {\blue in $U_\ty$}).
{\blue Define
\begin{align*}
 \hat \q_0:=
\left\{
\begin{array}{ll}
\q_0 & \text{if }|\q_0-q| \leq \diam \KK,\\
\q + \frac{\diam (\KK)}{|v|} v & \text{if $|\q_0-q| \geq \diam (\KK)$  or $\q_0$ does not exist  in $U_\ty$},\\
\end{array}
\right.
\end{align*}
(Notice that, by the assumption $\KK \subset B \subset 4 \rdot B \subset U_\ty$, we have $\hat \q_0 \in U_\ty$.)}
Then
\begin{align}\label{eq:hatzv-z}
{\blue 2|\hat \q_0-\q|} \geq (1-\rho) \frac{\|v\|_\KK^*}{|v|}.
\end{align}
\end{lemma}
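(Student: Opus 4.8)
The plan is to translate the claimed inequality \eqref{eq:hatzv-z} into a statement purely about the convex geometry of $\KK$ and the position of $q$, by discarding all information about the modified cost $\tc$ except the one crucial fact: the direction of $v = -D_q m_y(q)$. First I would observe that, by definition of the dual norm, $\|v\|_\KK^*/|v| = \dist(\zero, H_v)$, where $H_v$ is the supporting hyperplane of $\KK$ orthogonal to $v$ on the side into which $v$ points (see Figure~\ref{fig2bis-A}); this is exactly the width available to $\KK$ in the direction $v$ measured from the origin. So the right-hand side of \eqref{eq:hatzv-z} is $(1-\rho)\dist(\zero,H_v)$, and since $q \in \rho\rdot\KK$ while $\zero$ is the John center of $\KK$, the segment from $q$ in direction $v$ must travel a definite distance before it can exit $\KK$ through $H_v$; concretely, $q + t v$ stays strictly on the $\KK$-side of $H_v$ for all $t$ with $0 \le t |v| < (1-\rho)\dist(\zero,H_v)$, because moving from the origin to $q$ costs at most $\rho$ of the width and moving parallel to $H_v$ costs nothing in the $v$-direction. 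Hence the first point at which the ray $\ell_v^q$ can meet $H_v$ — and therefore $\partial\KK$, a fortiori $\{m_y = 0\} \supset \partial(\{m_y<0\}) \supset$ (part of $\partial\KK$ since $\KK \subset \{m_y < 0\}$) — lies at distance at least $(1-\rho)\dist(\zero,H_v)$ from $q$.

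Next I would connect this to $\hat\q_0$. There are two cases built into the definition of $\hat\q_0$. If $\q_0$ exists in $U_\ty$ and $|\q_0 - q| \le \diam\KK$, then $\hat\q_0 = \q_0$ and I must show $|\q_0 - q| \ge \tfrac12(1-\rho)\dist(\zero,H_v)$. This follows from the previous paragraph once I check that $\q_0$ (the first crossing of $\{m_y = 0\}$) is not \emph{before} the first crossing of $H_v$: since $m_y$ restricted to the ray is... here is the subtlety — $m_y$ along the ray $\ell_v^q$ is not a priori monotone or convex because $\tc$ need not be convex in $q$. To handle this I would invoke Theorem~\ref{thm:apparently convex} / Loeper's maximum principle machinery, or more simply use Lemma~\ref{L:c estimate}: the direction $-D_q\tc(\cdot,y)/|D_q\tc(\cdot,y)|$ varies by at most $\tfrac{2}{\e_c}|q-\tilde q|$ over $\KK$, and since $\diam\KK \le \e_c/4$ this direction turns by at most $1/2$ (in the sense of the chord of the unit sphere), so along the whole segment of length $\le \diam\KK$ inside $\KK$ the gradient of $m_y$ keeps a positive component against $v$; hence $t \mapsto m_y(q + tv)$ is strictly decreasing as long as we remain in $\KK$, so $m_y$ cannot hit $0$ inside $\KK$ (it is $<0$ there and decreasing), forcing the crossing $\q_0$ to occur no earlier than the exit through $H_v$. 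In the other case, where $|\q_0 - q| \ge \diam\KK$ or $\q_0$ fails to exist in $U_\ty$, we set $\hat\q_0 = q + \tfrac{\diam\KK}{|v|}v$, so $|\hat\q_0 - q| = \diam\KK \ge \dist(\zero,H_v) \ge \tfrac12(1-\rho)\dist(\zero,H_v)$ trivially, using $\dist(\zero,H_v) \le \diam\KK$ from \eqref{eq:comparison norms} with $v$ and the fact $0 < 1-\rho < 1$. Combining the cases gives $2|\hat\q_0 - q| \ge (1-\rho)\dist(\zero,H_v) = (1-\rho)\|v\|_\KK^*/|v|$, which is \eqref{eq:hatzv-z}.

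The main obstacle, as flagged above, is the non-convexity of $q \mapsto -\tc(q,y)$: one cannot simply say "$m_y < 0$ on $\KK$ and the ray leaves a convex set, hence $m_y$ hits $0$ exactly at the boundary." The honest fix is the quantitative Lipschitz control of the gradient \emph{direction} from Lemma~\ref{L:c estimate}, together with the smallness hypothesis $\diam\KK \le \e_c/4$, which guarantees that over distances comparable to $\diam\KK$ the function $m_y$ behaves like a genuine linear function with slope $v$ up to a controlled angular error, so that its first zero along $\ell_v^q$ cannot precede the geometric exit point through $H_v$. The factor $2$ on the left of \eqref{eq:hatzv-z} (rather than $1$) is precisely the slack absorbing this angular error. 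Once this is in place the rest is elementary convex geometry of the John ellipsoid of $\KK$ and the position $q \in \rho\rdot\KK$.
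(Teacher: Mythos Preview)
Your argument has a genuine gap in the non-trivial case $\hat q_0 = q_0$. You correctly observe that the ray $\ell_v^q$ reaches the supporting hyperplane $H_v$ of $\KK$ only after traveling distance at least $(1-\rho)\dist(\zero, H_v)$, and your monotonicity argument (via Lemma~\ref{L:c estimate}) shows at best that $m_y$ is monotone along the ray, hence $q_0 \notin \KK$. But these two facts together do \emph{not} yield $|q_0 - q| \ge \tfrac{1}{2}(1-\rho)\dist(\zero, H_v)$: the ray can exit $\KK$ long before it reaches $H_v$ (picture $\KK$ thin in a direction transverse to $v$), and once outside $\KK$ nothing you have said prevents $m_y$ from vanishing immediately. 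Your sentence ``forcing the crossing $q_0$ to occur no earlier than the exit through $H_v$'' conflates the exit from $\KK$ with the crossing of $H_v$; you have bounded only the latter from below. The chain of inclusions ``$\{m_y = 0\} \supset \partial(\{m_y<0\}) \supset$ part of $\partial\KK$'' in your first paragraph is simply false in general, since $\partial\KK$ lies strictly inside $\{m_y < 0\}$.

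The paper repairs this with a different hyperplane: rather than one supporting $\KK$ orthogonal to $v$, it takes the hyperplane $\tilde H_v$ tangent to $\{m_y = 0\}$ at $q_0$, orthogonal to $v_0 := -D_q\tc(q_0, y)$. The crucial point you did not exploit is that $\{m_y \le 0\}$ is itself \emph{convex} (level-set convexity, Theorem~\ref{thm:apparently convex}), so $\tilde H_v$ supports this convex body; since $\KK \subset \{m_y < 0\}$, the hyperplane $\tilde H_v$ separates $q_0$ from all of $\KK$, giving directly $|q_0 - q| \ge \dist(q, \tilde H_v) \ge (1-\rho)\|v_0\|_\KK^*/|v_0|$. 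Only at this stage is Lemma~\ref{L:c estimate} invoked, to compare the direction $v_0/|v_0|$ with $v/|v|$ and transfer the estimate from $v_0$ to $v$; this comparison is what consumes the factor $2$. In short, the missing idea is not gradient-direction stability along the ray but the convexity of the sublevel set $\{m_y \le 0\}$.
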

\begin{figure}
\begin{center}
\centerline{\epsfysize=2.0truein\epsfbox{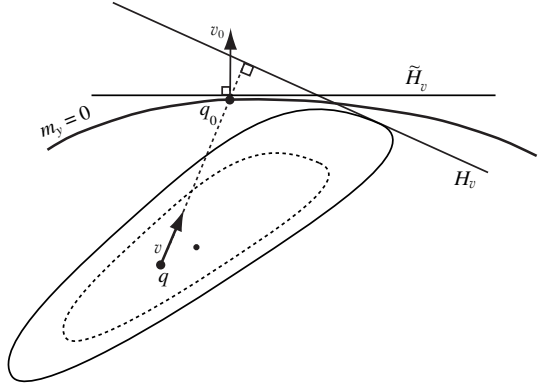}}\caption{{\small
It is easily seen that $|\q_0-\q|$ controls $\dist(\zero,H_{\tilde v})$,
and using that the gradient of $m_y$ does not vary much we deduce that the latter is close to
$\dist(\zero,H_{v})$ in terms of $|\q_0-\q|$.}}\label{fig2bis-B}
\end{center}
\end{figure}

\begin{proof}
{\blue   In the case $\hat \q_0 = \q+ \diam (\KK) v$, the inequality \eqref{eq:hatzv-z} follows from \eqref{eq:comparison norms}. Thus, we can assume that $\q_0$ exists in $U_\ty$  and that  $\hat \q_0 = \q_0$. }
Let us recall that $\{m_y<0\}$ is convex (see Theorem \ref{thm:apparently convex}).
Now, let
{\blue $\tilde H_v $}
 denote the hyperplane
tangent to $\{m_y=0\}$ at $\q_0$.
Let us observe that $\tilde H_v$ is orthogonal to the vector $v_0:= - D_q m_y(\q_0)$, see Figure \ref{fig2bis-B}.
Since $\KK\subset \{m_y<0\}$, we have $\tilde H_v \cap \KK=\emptyset$, which implies
$$
\dist(\zero,\tilde H_v)\geq \sup_{w \in \KK} w \cdot \frac{v_0}{|v_0|}=\frac{\|v_0\|_\KK^*}{|v_0|}.
$$
Moreover, since $\q \in \rho \rdot \KK$ and $\tilde H_v \cap
\KK=\emptyset$, we also have
$$
\dist(\q,\tilde H_v) \geq (1-\rho) \dist(\zero,\tilde H_v).
$$
Hence, observing that
$\q_0 \in \tilde H_v$ we obtain
$$
|\q_0 - \q| \geq \dist(\q,\tilde H_v) \geq (1-\rho) \frac{\|v_0\|_\KK^*}{|v_0|}.
$$
Now, to conclude the proof, we observe that \eqref{slope compare 2} applied with $v_0=-D_q \tc (\q_0,y)$ and $v=-D_q \tc (\q,y)$, together with \eqref{eq:comparison norms}
and the assumption $\diam(\KK) \leq \e_c/4$, implies
$$
\left\| \frac{v}{|v|} - \frac{v_0}{|v_0|}\right\|_\KK^* \leq \diam(\KK)\left| \frac{v}{|v|} - \frac{v_0}{|v_0|}\right| \leq  \frac{2\diam(\KK)}{\e_c} |\q_0-\q| \leq \frac{|\q_0-\q|}{2}.
$$
Combining all together and using the triangle inequality for $\|\cdot\|_\KK^*$ (which
is a consequence of the convexity of $\KK$) we finally obtain
\begin{align*}
 \left\| \frac{v}{|v|}\right\|_\KK^*
& \leq \left\| \frac{v_0}{|v_0|}\right\|_\KK^*+
\left\| \frac{v}{|v|} - \frac{v_0}{|v_0|}\right\|_\KK^* \\
& \leq
\frac{|\q_0-\q|}{1-\rho}+ \frac{|\q_0-\q|}{2} \\
& \leq \frac{2|\q_0-\q|}{1-\rho},
\end{align*}
as desired.
\end{proof}

\begin{lemma}
\label{lem:line from z}
{\blue With the same notation and assumptions as in  Lemmata \ref{lemma:bound dual norm} and  \ref{lem:line from 0},} fix $\q' \in \rho \KK$, and
let $\ell_v^{\q'}$ denote the half line $\ell_v^{\q'} :=\q'+\R^+v=\{ \q' + t v\ | \,t> 0 \}$.
Denote by $\q_0'$ the intersection of $\ell_v^{\q'}$ with $\{m_y=0\}$
{\blue (assuming it exists  in $U_\ty$)}.
Then,
\begin{align*}
{\blue |\q_0' - \q'| \geq \frac{1-\rho}{2n} |\hat \q_0-\q|.}
\end{align*}
\end{lemma}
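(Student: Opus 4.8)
The plan is to fix $u:=v/|v|$ and $L:=|\hat\q_0-\q|$, so that $\hat\q_0=\q+Lu$ and $\ell_v^{\q'}=\{\q'+su: s>0\}$, and to reduce everything to the single geometric fact that the segment $[\q',\q'+\mu Lu]$ is contained in $\{m_y\le 0\}$ for the choice $\mu:=\frac{1-\rho}{2n}$. Granting this, the conclusion follows: $\{m_y<0\}$ is convex (Theorem~\ref{thm:apparently convex}) and contains $\q'$, so $m_y<0$ along the half-open segment $[\q',\q'+\mu Lu)$; the localization hypothesis $\KK\subset B\subset 4\rdot B\subset U_\ty$ together with $\mu L\le L\le\diam\KK\le\diam B$ places this whole segment inside $4 \rdot B\subset U_\ty$, and since $\q_0'$ (if it exists in $U_\ty$) lies on $\{m_y=0\}$, it cannot be met before parameter $\mu L$ along the ray; hence $|\q_0'-\q'|\ge\mu L=\frac{1-\rho}{2n}|\hat\q_0-\q|$. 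Here I also use, exactly as recorded in Lemma~\ref{lem:line from 0}, that the construction of $\hat\q_0$ there yields $\hat\q_0\in\{m_y\le 0\}\cap U_\ty$ in each of its cases.

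To prove the geometric fact I would start from the elementary identity
\[
\q'+\mu L u=\mu\,\hat\q_0+(1-\mu)\,w,\qquad w:=\frac{\q'-\mu\q}{1-\mu},
\]
which exhibits $\q'+\mu Lu$ as a convex combination of $\hat\q_0\in\{m_y\le 0\}$ and $w$; since $\{m_y<0\}$ is convex it therefore suffices to show $w\in\KK\subset\{m_y<0\}$. Writing $\q=\rho a$ and $\q'=\rho b$ with $a,b\in\KK$ (possible since $\q,\q'\in\rho\rdot\KK$), one computes $w=\rho\frac{b-\mu a}{1-\mu}=\q'+\rho\frac{\mu}{1-\mu}(b-a)$. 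Letting $E$ be the John ellipsoid of $\KK$, centered at the origin, with Minkowski gauge $\|\cdot\|_E$, the inclusion $\KK\subset n\rdot E$ gives $\|a\|_E,\|b\|_E\le n$, hence $\|w-\q'\|_E=\rho\frac{\mu}{1-\mu}\|b-a\|_E\le\frac{2n\rho\mu}{1-\mu}$, and with $\mu=\frac{1-\rho}{2n}$ the desired inequality $\frac{2n\rho\mu}{1-\mu}\le 1-\rho$ reduces to the trivial $(2n-1)\rho\le 2n-1$. Finally, $\q'\in\rho\rdot\KK$ forces $\q'+(1-\rho)E\subset\KK$ (a convex combination with points of $E\subset\KK$), so $\|w-\q'\|_E\le 1-\rho$ yields $w\in\KK$, completing the argument. (One may in fact keep the slightly larger admissible value $\mu=\frac{1-\rho}{1+(2n-1)\rho}$, improving the constant, but $\frac{1-\rho}{2n}$ is what will be used.)

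The step I expect to be the real obstacle is the one isolated above: $\q$ and $\q'$ can be as far apart as $\sim n\,\diam E$, typically much larger than $L$, so one cannot simply translate the good segment $[\q,\hat\q_0]$ over to a segment based at $\q'$. The identity circumvents this by travelling only the fraction $\mu$ of the way from $\q'$ toward $\hat\q_0$ and absorbing the displacement $\q-\q'$ into the $(1-\rho)E$-collar separating $\rho\rdot\KK$ from $\partial\KK$; the remaining ingredients — the case analysis for $\hat\q_0$, the verification that every manipulated point stays in the domain of $\tc$, and the open/closed bookkeeping for the convex set $\{m_y<0\}$ — are routine and essentially already present in Lemma~\ref{lem:line from 0}.
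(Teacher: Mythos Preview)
Your proof is correct and takes a genuinely different route from the paper's. The paper argues via similar triangles: it introduces the intersection point $q_1$ of the line through $q,q'$ with the line through $\hat q_0,q_0'$, observes that $\frac{|q_0'-q'|}{|\hat q_0-q|}=\frac{|q'-q_1|}{|q-q_1|}$, and then splits into two cases according to which of $|q-q_1|$, $|q'-q_1|$ is larger. In the nontrivial case it shows $q_1\notin\KK$, replaces $q_1$ by the boundary point $\tilde q_1\in\partial\KK$ on the segment $[q',q_1]$, and finally pulls back by the John map $L^{-1}$ to estimate $\frac{|q'-\tilde q_1|}{|q-\tilde q_1|}\ge\frac{1-\rho}{2n}$. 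Your argument bypasses this geometry entirely: you write $q'+\mu Lu=\mu\hat q_0+(1-\mu)w$ and reduce the question to the single inclusion $w\in\KK$, which you verify by a clean John-ellipsoid computation showing $\|w-q'\|_E\le 1-\rho$ and then using $q'+(1-\rho)E\subset\KK$. This avoids the case split and the introduction of $q_1,\tilde q_1$; it also makes transparent why the constant $\frac{1-\rho}{2n}$ appears (and, as you note, admits the sharper $\mu=\frac{1-\rho}{1+(2n-1)\rho}$). The paper's approach, on the other hand, is closer in spirit to the classical Monge--Amp\`ere arguments and may feel more geometric to readers familiar with that literature.
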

\begin{proof}

Let $\ell_{\q\q'}$ and {\blue $\ell_{\hat \q_0\q_0'}$} denote the
lines passing through $\q,\q'$ and {\blue $\hat \q_0,\q_0'$}
respectively, and denote by $\q_1:=\ell_{\q\q'}\cap \ell_{\hat
\q_0\q_0'}$ their intersection point (see Figure \ref{fig2bis-C}); {\RJM since the first four
points lie in the same plane,  we can (after slightly perturbing
$\q$ or $\q'$ if necessary) assume this intersection} exists and is
unique. {\blue Note that in the following we will only use
convexity of $\KK$, and in particular we do not require $\q_1 \in
U_\ty$.}

{\blue Let us now distinguish two cases, depending whether $|\q-\q_1| \leq |\q'-\q_1|$ or
$|\q-\q_1| \geq |\q'-\q_1|$.

 {\em $\bullet$ $|\q-\q_1| \leq |\q'-\q_1|$}:
In this case} we simply observe that, since {\blue $\hat \q_0-\q$ and $\q_0'-\q'$} are parallel, by similarity
$$
1 \le \frac{|\q'-\q_1|}{|\q-\q_1|}={\blue \frac{| \q_0'-\q'|}{|\hat \q_0-\q|}},
$$
and so the result is proved {\blue since  $1 \ge \frac{1-\rho}{2n}$.} 

\begin{figure}
\begin{center}
\centerline{\epsfysize=2.0truein\epsfbox{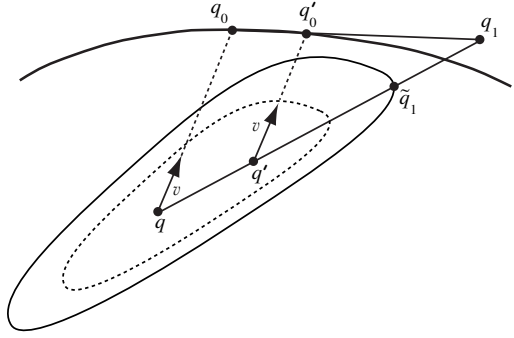}}\caption{{\small
In this figure we assume $\hat q_0=q_0$.
Using similarity, we only need to bound from below the ratio $\frac{|\q'-\q_1|}{|\q-\q_1|}$,
and the latter is greater or equal than $\frac{|\q'-\tilde \q_1|}{|\q-\tilde \q_1|}$.
An easy geometric argument allows to control this last quantity.
.}}\label{fig2bis-C}
\end{center}
\end{figure}

{\em $\bullet$ $|\q-\q_1 {\RJM | \geq |} \q'-\q_1|$}:
In this second case,
we first claim that $\q_1 \not\in \KK$. Indeed, we observe that
since $\hat \q_0-\q$ and $\q_0'-\q'$ are parallel, the point $\q_1$ cannot belong to the segment joining $\hat \q_0$
and $\q_0'$. Hence, since $\hat \q_0 \in \{ m_y \le 0 \}$, $\q_0' \in \{m_y=0\}$, and $\KK \subset \{m_y <0\}$ by assumption, by
the convexity of the set $\{m_y \leq 0\}$
we necessarily have {\blue $\q_1$ outside $\{m_y < 0\}$, thus outside of $\KK$,} which proves the claim.

Thus, we can find the point $\tilde \q_1$ obtained by intersecting $\p\KK$
with the segment going from $\q'$ to $\q_1$, and
by the elementary inequality
$$
\frac{a+c}{b+c} \geq \frac{a}{b}\qquad \forall\, 0<  a \leq b,\, c \geq 0,
$$
we get
$$
\frac{|\q'-\q_1|}{|\q-\q_1|} \geq \frac{|\q'-\tilde \q_1|}{|\q-\tilde \q_1|}.
$$
Now, to estimate the right hand side from below, let $L$ be the affine transformation provided by John's Lemma such that
$B_1 \subset L^{-1}(\KK) \subset B_n$. Since the points $\q,\q',\tilde \q_1$ are aligned, we have
$$
\frac{|\q'-\tilde \q_1|}{|\q-\tilde \q_1|}=\frac{|L^{-1}\q'-L^{-1}\tilde \q_1|}{|L^{-1}\q-L^{-1}\tilde \q_1|}.
$$
Now, since $L^{-1}\q,L^{-1}\tilde \q_1 \in B_n$, we immediately get
$$
|L^{-1}\q-L^{-1}\tilde \q_1| \leq 2n.
$$
On the other hand, since $L^{-1}\q' \in \rho \rdot \bigl(L^{-1}(\KK)\bigr)$
while $L^{-1}\tilde \q_1
\in \partial \bigl(L^{-1}(\KK)\bigr)$,
{\blue it follows from $B_1 \subset L^{-1}(\KK)$ that
$$
|L^{-1}\q'-L^{-1}\tilde \q_1| \, \geq  \, 1-\rho.
$$
To see this last inequality, for each point $w \in \p \big(L^{-1}(\KK)\big)$ consider the
convex hull $\mathcal{C}_w \subset L^{-1}(\KK)$ of $\{w \} \cup B_1$. Then, centered at
the point $ \rho w \in \p\Big( \rho \rdot \big(L^{-1}(\KK)\big)\Big)$ there exists a ball
of radius $1-\rho$ contained in $\mathcal{C}_w$, thus in $L^{-1}(\KK)$. This shows that the
distance from any point in $\rho\big( L^{-1}(\KK)\big)$ to $\p \big(L^{-1}(\KK)\big)$ is at
least  $1-\rho$, proving the inequality.

From these inequalities and using again similarity, we get
$$
\frac{|\q_0'-\q'|}{|\hat \q_0-\q|} = \frac{|\q'-\q_1|}{|\q-\q_1|} \geq \frac{1-\rho}{2n},
$$
concluding the proof.}
\end{proof}

\begin{proof}[\bf \blue  Proof of Lemma \ref{lemma:bound dual norm}]
Fix $v=-D_q\tc(\q,y)$, with {\blue $\q \in \rho \rdot \KK$,} $\q'
\in \rho \rdot \KK$ and $y \in \p^\tc \tu(\q')$, {\blue and define
the function $m_y (\cdot):=-\tc (\cdot,y) + \tc (\q', y) + \tu
(\q')$. Notice that since $y \in \p^\tc \tu (\q')$, the inclusion
$\Q = \{ \tu \le 0\} \subset \{ m_y \le 0\}$ holds.

Recall the point $\q_0'$ from Lemma \ref{lem:line from z} (when it exists in $U_\ty$), and the point $\hat \q_0 \in U_\ty$ from Lemma \ref{lem:line from 0}. Define,
\begin{align*}
& \hat \q_0':=
\left\{
\begin{array}{ll}
\q_0' & \text{if }|\q_0' - \q'| \leq \diam \KK,\\
\q' + \frac{\diam (\KK)}{|v|}  v & \text{if $|\q_0' - \q'| \geq \diam (\KK)$  or $\q_0'$ does not exist  in $U_\ty$},\\
\end{array}
\right.
\end{align*}
Notice that by the assumption $\KK \subset B \subset 4 \rdot B \subset U_\ty$,  the points $\hat \q_0$, $\hat \q_0'$ belong to $U_\ty$.
Let
 $[\q', \hat \q_0'] \subset U_\ty$} denote the segment going from $\q'$ to {\blue $\hat \q_0'$}. Then,
since ${\blue \hat \q_0'} \in \{m_y \leq 0\}$ and $\tu$ is negative inside $\KK\subset \Q$, we get
\begin{align}\label{eq:infKKtu}\nonumber
|\inf_\KK \tu| &\ge - \tu(\q') \geq -\tu(\q')  + \tu(\hat \q_0') \\
 &\geq -\tc( \hat \q_0', y )  + \tc(\q', y) = \int_{[\q', \hat \q_0']} \langle -D_q \tc(w, y),
 \frac{v}{|v|} \rangle \,dw
\end{align}
{\blue Since $\q,\q' \in \KK$ and  $|\hat \q_0' - \q'| \leq \diam (\KK)$,
we have
\begin{align*}
|\q-w| \leq 2 \diam (\KK) \le \e_c/2 \qquad \forall\,w \in [\q',\hat \q_0'].
\end{align*}
}
Hence, recalling that $-D_q \tc (\q, y) = v$ and applying Lemma~\ref{L:c estimate}, we obtain
\begin{align*}
 \langle -D_q \tc(w, y),  \frac{v}{|v|} \rangle \ge \frac{|v|}{2}  \qquad \forall\,w {\blue \in [\q',\hat \q_0'].}
\end{align*}
{\blue Applying this and Lemmas \ref{lem:line from 0} and \ref{lem:line from z} to \eqref{eq:infKKtu}, we finally conclude
\begin{align*}
 |\inf_\KK \tu| &\geq \min\left\{\diam (\KK), |\q_0' - \q'|\right\} \frac{|v|}{2} \qquad \hbox{(from \eqref{eq:infKKtu})}\\   &\geq  \min\left\{\diam (\KK), \frac{1-\rho}{2n} |\hat \q_0-\q| \right\} \frac{|v|}{2} \qquad \hbox{(from Lemma  \ref{lem:line from z})}\\ & \geq  \min\left\{\diam (\KK), \frac{(1-\rho)^2}{4n}\frac{\|v\|_\KK^*}{|v|}\right\} \frac{|v|}{2} \qquad \hbox{(from Lemma  \ref{lem:line from 0})}\\
 &=  \min\left\{\frac{1}{2} \diam (\KK) |v|,  \frac{(1-\rho)^2}{8n}\|v\|_\KK^*\right\}\\
 &  \ge  \min\left\{\frac{1}{2}\|v\|_\KK^*,  \frac{(1-\rho)^2}{8n} \|v\|_\KK^*\right\}   \qquad \hbox{(from \eqref{eq:comparison norms} }\\
& = \frac{(1-\rho)^2}{8n} \|v\|_\KK^*
\end{align*}
}
{\blue as desired, completing the proof.}
\end{proof}

\begin{proof}[{\blue \bf Proof of Theorem \ref{thm:lower Alex}}]
Set $h := |\inf_\Q \tu |$. Since $ \Leb{n} (\Q) \leq n^n  \Leb{n} (E)$, it is enough to show
\begin{align}\label{eq:box ineq}
\lambda \Leb{n}(E)^2 \le C(n) \gamma^-_\tc \frac{h^n}{\delta^{2n}}.
\end{align}
So, the rest of the proof is devoted to \eqref{eq:box ineq}.
Let $E_\delta = x_0 + \delta \rdot E$ be defined as in the statement of the proposition.
With no loss of generality, up to a change of coordinates we can assume that $E_\delta$ is
of the form $\{{\RJM q \mid \sum_i a_i^2q_i}^2<1\}$.
Define
\begin{align*}
 \mathcal{C}_0 = \left\{ q_0 \in T_\zero \Q \ | \ q_0 = -D_q \tc(\zero, y), \ \ y \in \partial^{\tc} \tu \big({\textstyle \RJM \frac{1}{2}} \rdot E_\delta\big)\right\}.
\end{align*}
Notice that
\begin{align}
\label{eq:first bound}
\Leb{n}\big( \partial ^{\tc} \tu \big({\textstyle \RJM \frac{1}{2}} \rdot E_\delta\big)\big) \le \gamma^-_\tc \Leb{n} ( \mathcal{C}_0)
\end{align}
(see (\ref{Jacobian bound})).
Now, let us define the norm
$$
\|q\|_{E_\delta}^*:= \sqrt{\sum_{i=1}^n\frac{q_i^2}{a_i^2}}=\sup_{v \in E_\delta} v \cdot q.
$$
{\blue Since $\KK=E_\delta$ satisfies the assumptions for Lemma \ref{lemma:bound dual norm},
choosing $\rho=1/2$} we get
\begin{equation}
\label{eq:bound q0}
\|q_0\|_{E_\delta}^*\leq {\red 32 n} h  \hbox{ for all  $q_0 \in \mathcal{C}_0$}.
\end{equation}
Let $\Phi_h: \R^n  \to \R^n$ denote the linear map
$ \Phi_h(x) :=  {\red 32 n}  h \left( a_1x_1, \ldots, a_nx_n\right).$ Then  {\blue from  \eqref{eq:bound q0} it follows}
\begin{align}\label{eq: C inclusion}
\mathcal{C}_0 \subset \Phi_h (B_1).
\end{align}
Since
 \begin{align}\label{eq: cone bound}
 \Leb{n} (\Phi_h (B_1))) \leq C(n)
  \frac{h^n}{\Leb{n} (E_\delta)},
\end{align}
combining \eqref{eq:first bound}, \eqref{eq: C inclusion}, and \eqref{eq: cone bound}, we get
\begin{align*}
\Leb{n}\big( \partial ^{\tc} \tu \big({\textstyle \RJM \frac{1}{2}} \rdot E_\delta\big)\big) \le C(n) \gamma^-_\tc \frac{h^n}{\Leb{n}(E_\delta)}. 
\end{align*}
As
$\Leb{n}\big( \partial ^{\tc} \tu \big({\textstyle \RJM \frac{1}{2}} \rdot E_\delta\big)\big)  \ge \lambda 2^{-n} \Leb{n} (E_\delta)$
and $\Leb{n}(E_\delta) = \delta^{n}\Leb{n} (E)$, we obtain the desired conclusion.
\end{proof}

\subsection{{\RJM Bounds for $\tc$-cones over convex sets}}\label{S:c-cone}


We now progress toward the Alexandrov type {\blue upper bounds in
Theorem}~{\RJM \ref{thm:estimate}.}
 In this subsection we construct and
study the $\tc$-cone associated to the section of a $\tc$-convex
function. This $\tc$-cone {\RJM --- whose entire
$\tc$-Monge-Amp\`ere mass concentrates at a single prescribed point
---} plays an essential role in our proof of
Lemma~\ref{L:prop-infleq}.


\begin{definition}[$\tc$-cone]
\label{def:ccone}
Assume \Bzero--\Bthree, 
and let $\tu:\cl \U_\ty \longmapsto \R$ be the $\tilde c$-convex function with
convex level sets from Theorem~\ref{thm:apparently convex}.
Let $\Q$ denote the section $\{ \tilde u \le 0\}$, fix $\tilde q
\in \intr \Q$, and assume $\tu = 0$ on $\p \Q$ {\blue and $\cl \Q \subset U_{\ty}$}. The \emph{$\tc$-cone
$h^\tc: \U_\ty \longmapsto \R$ generated by $\tq$ and $\Q$ with
height $-\tu(\tq)>0$} is given by
\begin{align}\label{c-cone}
h^\tc(q)  :=\sup_{y \in \cl \V}
\{ -\tc(q, y) + \tc(\tq, y) + \tu(\tq) \mid
   -\tc(q, y) + \tc(\tilde q, y) +\tu(\tq)\le 0 \text{ on } \p \Q \}.
\end{align}
\end{definition}

Notice the $\tc$-cone $h^\tc$ depends only on the convex set $\Q \subset {\blue \U_\ty}$, $\tq \in \intr \Q$,
and the value $\tu(\tq)$, but is otherwise independent of $\tilde u$.
Recalling that $\tc(q, \ty) \equiv 0$ on $\U_{\ty}$, we record several key properties
of the $\tc$-cone:

\begin{lemma}[Basic properties of $\tc$-cones]
\label{lemma:propccone} Adopting the notation and hypotheses of
Definition \ref{def:ccone}, let $h^\tc: {\blue \U_\tq} \longmapsto
\R$ be the $\tc$-cone generated by $\tilde q$ and $\Q$ with height
$-\tu (\tilde q){\RJM >0}$. Then
\begin{enumerate}
\item[(a)] $h^\tc$ has convex level sets; {\RJM furthermore,} it is a convex function if \Bfour\ holds;
\item[(b)] $h^\tc(q) \geq h^\tc(\tilde q) = \tu(\tilde q)$ for all $q \in \Q$;
\item[(c)] $h^\tc = 0 \text{ on } \p \Q$;
\item[(d)] $\p^\tc h^\tc (\tilde q) \subset \p^\tc \tu (\Q)$.
\end{enumerate}
\end{lemma}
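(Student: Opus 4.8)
The plan is to verify the four properties of the $\tc$-cone $h^\tc$ in order, each following fairly directly from Definition~\ref{def:ccone}, Theorem~\ref{thm:apparently convex}, and Loeper's maximum principle (Theorem~\ref{T:DASM}), together with its time-convex strengthening (Theorem~\ref{T:time-convex DASM}) under \Bfour.

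\emph{Proof of (a).} Observe that $h^\tc$ is, by \eqref{c-cone}, a supremum of functions of the form $q \mapsto -\tc(q,y) + \text{const}$ over a constrained set of $y \in \cl\V$. By the proof of Theorem~\ref{thm:apparently convex}, each such function $q \mapsto -\tc(q,y)$ has convex level sets under \Bthree\ (indeed $\tc$-segments with respect to $\ty$ are ordinary line segments in $\U_\ty$, and Loeper's maximum principle gives level-set convexity), and is genuinely convex under \Bfour\ (by Theorem~\ref{T:time-convex DASM}). The supremum of convex functions is convex, which settles the \Bfour\ case; for the \Bthree\ case one uses that a supremum of functions each having convex level sets again has convex level sets (each sublevel set $\{h^\tc \le t\}$ is an intersection of the convex sublevel sets $\{-\tc(\cdot,y)+C_y \le t\}$ over the admissible $y$'s, hence convex).

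\emph{Proof of (b), (c).} For (c): taking $y = \ty$ in \eqref{c-cone} is admissible since $\tc(\cdot,\ty)\equiv 0$ on $\U_\ty$ forces $-\tc(q,\ty)+\tc(\tq,\ty)+\tu(\tq) = \tu(\tq) < 0$, which is $\le 0$ on $\p\Q$; hence $h^\tc \ge \tu(\tq)$ everywhere on $\U_\ty$, but more importantly the constraint ``$\le 0$ on $\p\Q$'' combined with the level-set convexity from (a) shows that on $\p\Q$ the supremum is exactly $0$: each admissible competitor is $\le 0$ on $\p\Q$ by definition, and for any boundary point $q_*\in\p\Q$ one can choose $y$ realizing equality $-\tc(q_*,y)+\tc(\tq,y)+\tu(\tq)=0$ (this is the $y$ supporting the convex set $\Q$ appropriately at $q_*$; such a $y\in\cl\V$ exists because $\V$ is $\cs$-convex with respect to $\cl\U$, so the relevant supporting ``$\tc$-hyperplane'' through $q_*$ and $\tq$ is attained), giving $h^\tc(q_*)\ge 0$, hence $=0$. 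For (b): we just showed $h^\tc(q)\ge \tu(\tq)$ for all $q$; and $h^\tc(\tq) = \tu(\tq)$ because every admissible competitor evaluated at $\tq$ equals $-\tc(\tq,y)+\tc(\tq,y)+\tu(\tq)=\tu(\tq)$.

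\emph{Proof of (d).} Let $y \in \p^\tc h^\tc(\tq)$. Then $-\tc(\cdot,y)+\tc(\tq,y)+h^\tc(\tq)$ is an admissible competitor in \eqref{c-cone} and touches $h^\tc$ from below at $\tq$; since $h^\tc(\tq)=\tu(\tq)$ by (b), this competitor is $-\tc(\cdot,y)+\tc(\tq,y)+\tu(\tq)$, which lies below $h^\tc$ and, by the admissibility constraint, is $\le 0 = \tu$ on $\p\Q$. We must upgrade ``$\le \tu$ on $\p\Q$'' to ``$\le \tu$ on $\Q$'': this is where \emph{Corollary~\ref{C:local-global}} (the local-to-global principle) enters. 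The function $q \mapsto \tu(q) - \big(-\tc(q,y)+\tc(\tq,y)+\tu(\tq)\big) = \tu(q) + \tc(q,y) + \text{const}$ is of the form ``$c$-convex plus a $c$-support function'', and by the local-to-global principle its behavior is controlled: being $\ge 0$ on $\p\Q$ and a local minimum structure forces $\ge 0$ on all of $\Q$. Concretely, suppose it were negative somewhere inside $\Q$; then it attains an interior minimum over $\cl\Q$, which by Corollary~\ref{C:local-global} (applied to $x\mapsto \tu(x)+\tc(x,y)$, whose local minima are global) must be a global minimum over $\U_\ty$, but the value at $\tq$ is $0$ — and actually we need the reverse, so instead argue that the competitor, being a valid $\tc$-support touching $h^\tc$ from below at $\tq$ and $\le \tu$ on $\p\Q$, must be $\le \tu$ throughout $\Q$ by the maximum principle applied to $\tu$ minus the competitor. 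Hence $y \in \p^\tc \tu(\tq) \subset \p^\tc\tu(\Q)$.

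The main obstacle is step (d): one must carefully justify that a $\tc$-affine function lying below $h^\tc$ and below $\tu$ on $\p\Q$ actually lies below $\tu$ on all of $\Q$, which is precisely the content of the local-to-global principle for $\tc$-convex functions (Corollary~\ref{C:local-global}, valid since $\tc$ inherits \Bthree). The subtlety is that $\Q$ need not be $\tc$-convex in any strong sense — it is the \emph{level set} of $\tu$, convex in the chosen coordinates — so one cannot directly invoke a comparison principle on $\Q$; instead the argument must be run on $\U_\ty$ using that $\tu$ is globally $\tc$-convex there and that interior minima of $x\mapsto \tu(x)+\tc(x,y)$ propagate to global minima.
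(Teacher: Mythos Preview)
Your arguments for (a) and (b) are fine and match the paper. The problems are in (c) and (d).

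\textbf{The gap in (c).} You assert that for each $q_* \in \partial\Q$ there exists $y \in \cl\V$ with $-\tc(q_*,y)+\tc(\tq,y)+\tu(\tq)=0$ \emph{and} with this function $\le 0$ on all of $\partial\Q$ (admissibility), justified only by ``$\V$ is $c^*$-convex''. This is not a proof. Even if you produce a $y$ so that the zero set of $m_y(\cdot) := -\tc(\cdot,y)+\tc(\tq,y)+\tu(\tq)$ is tangent to $\partial\Q$ at $q_*$, the set $\{m_y=0\}$ is a \emph{curved} hypersurface, and tangency at a single point does not force $\Q \subset \{m_y \le 0\}$. The paper's construction is different and essential: pick $y_0 \in \partial^{\tc}\tu(q_*)$, so that $m_0(\cdot):=-\tc(\cdot,y_0)+\tc(q_*,y_0)$ satisfies $m_0 \le \tu \le 0$ on $\Q$; then slide along the $\tc$-segment $y(t)$ from $y_0$ to $\ty$ with respect to $q_*$, use the intermediate value theorem to find $t$ with $m_t(\tq)=\tu(\tq)$, and invoke Loeper's maximum principle (Theorem~\ref{T:DASM}) to get $m_t \le \max\{m_0,0\} \le 0$ on $\Q$. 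This is exactly the missing mechanism that guarantees admissibility.

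\textbf{The confusion in (d).} Your writeup for (d) is self-admittedly tangled (``and actually we need the reverse, so instead argue\ldots''), and your final sentence ``Hence $y \in \partial^{\tc}\tu(\tq)$'' is not justified and need not hold: there is no reason the supporting mountain at $\tq$ for $h^{\tc}$ should also support $\tu$ at $\tq$. The correct conclusion is only $y \in \partial^{\tc}\tu(\Q)$. The clean argument (which the paper sketches as ``move the supporting mountain down until the last moment it touches $\tu$ inside $\Q$'') is: the mountain $m(\cdot)=-\tc(\cdot,y)+\tc(\tq,y)+\tu(\tq)$ is $\le h^{\tc}$, hence $\le 0=\tu$ on $\partial\Q$ by (c); minimize $q\mapsto \tu(q)-m(q)$ over $\cl\Q$. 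If the minimum is $\ge 0$, then $\tq$ is an interior minimizer (value $0$); otherwise the minimizer lies in $\operatorname{int}\Q$ since the function is strictly positive on $\partial\Q$ after any downward shift. Either way the minimizer $q'$ is interior to $\Q\subset U_{\ty}$, so it is a local minimum of $\tu+\tc(\cdot,y)$, hence a global one by Corollary~\ref{C:local-global}, giving $y\in\partial^{\tc}\tu(q')\subset\partial^{\tc}\tu(\Q)$.
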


\begin{proof}
Property (a) is a consequence of the level-set convexity of $q
\longmapsto -\tc(q, y)$ proved in Theorem \ref{thm:apparently
convex}, or its convexity assuming \Bfour. Moreover, since
$-\tc(q,\ty)+\tc(\tilde q,\ty)+\tu(\tilde q)=\tu(\tilde q)$ for all
$q\in \U_\ty$, (b) follows.

For each pair $q_0 \in \p \Q$ and $y_0
\in \p^\tc \tu (q_0)$, consider the supporting mountain $m_0 (q)=
-\tc(q, y_0 ) + \tc(q_0, y_0) $, i.e. $m_0(q_0)=0=\tu (q_0)$ and
$m_0 \le \tu$. Consider the $\tc$-segment $y(t)$ connecting
$y(0)=y_0$ and $y(1)= \ty$ in $\V$ with respect to $q_0$. Since
$-\tc(q, \ty) \equiv 0$, by continuity there exists some $t
 \in [0, 1[$ for which $m_t (q):= -\tc(q, y(t)) +
\tc(q_0, y(t))$ satisfies $m_t (\tilde q) = \tu(\tilde q)$. From
Loeper's maximum principle (Theorem \ref{T:DASM} above), we have
$$
m_t \le \max[ m_0 , -\tc(\cdot, \ty) {\blue +  \tc (q_0, \ty)}]=
\max[m_0 , 0],
$$
and therefore, from $m_0 \le \tu$,
$$
m_t \le 0 \text{ on } \Q.
$$
By the construction, $m_t$ is of the form
$$
{\blue -\tc(\cdot, y(t)) + \tc(\tilde q, y(t))} + \tu (\tilde q),
$$
and vanishes at $q_0$. This proves (c). Finally (d) follows from (c)
and  the fact that $h^\tc (\tilde q ) = \tu(\tilde q)$. Indeed, it
suffices to move down the supporting mountain of $h^\tc$ at $\tilde
q$ until the last moment at which it touches the graph of $\tu$ inside
$\Q$. The conclusion then follows from Loeper's local to
global principle, Corollary \ref{C:local-global} above.
\end{proof}

The following estimate shows that the Monge-Amp\`ere measure, and
the relative location of the vertex within the section which
generates it, control the height of any well-localized $\tc$-cone.
Together with
Lemma~\ref{lemma:propccone}(d), this proposition plays a key role
in the proof of our Alexandrov type estimate
(Lemma~\ref{L:prop-infleq}).


\begin{proposition}
[Lower bound on the Monge-Amp\`ere measure of a small $\tc$-cone]
\label{P:partial c-cone} Assume \Bzero--\Bthree, and define
$\tilde c \in C^3\big(\cl\U_\ty \times \cl\V\big)$ as in
Definition \ref{D:cost exponential}. Let  $\Q \subset {\blue  \U_\ty}$
be a compact convex set, and $h^\tc$ the $\tc$-cone generated by
$\tq \in \intr \Q$ of height $-h^\tc(\tq)>0$ over $\Q$. Let
$\Pi^+,\Pi^-$ be two parallel hyperplanes contained in $T^*_\ty \V
\setminus \Q$ and touching $\p \Q$ from two opposite sides.
{\blue
We also assume that {\red there exists a ball $B$ such that} $\Q \subset B \subset 4 \rdot B \subset U_{\bar y}$.}
Then
there exist $\e_c>0$ small, depending only on the cost (and given
by Lemma~\ref{L:c estimate}), and a constant $C(n)>0$ depending
only on {\blue the} dimension, such that the following holds:

If $\diam(\Q) \leq {\RJM \e_c' :=} \e_c/C(n)$,
then
\begin{align}\label{p c-cone lower}
|h^\tc (\tilde q)|^n \leq C(n) \frac{\min\{\dist(\tilde
q,\Pi^+),\dist(\tilde q,\Pi^-)\}}{\ell_{\Pi^+}}|\p h^\tc|(\{\tilde
q\}) \Leb{n}(\Q),
\end{align}
where $\ell_{\Pi^+}$ denotes the maximal length among all
the segments obtained by intersecting $\Q$ with a line orthogonal
to $\Pi^+$.
\end{proposition}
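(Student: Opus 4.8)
The plan is to bound $\Leb{n}$ of the ordinary subdifferential $\p h^\tc(\tq)$ from below. Write $h := -h^\tc(\tq) > 0$, set $\pi_\tq(y) := -D_q\tc(\tq,y)$ and $m_y(q) := -\tc(q,y) + \tc(\tq,y) + h^\tc(\tq)$, and let $Y := \{y \in \cl\V : m_y \le 0 \text{ on } \Q\}$. First I would observe that $\p h^\tc(\tq) \supseteq \pi_\tq(Y)$: for $y\in Y$ the function $m_y$ is one of those competing in the supremum \eqref{c-cone}, so $m_y \le h^\tc$ on $\Q$ with $m_y(\tq) = h^\tc(\tq)$, whence $\pi_\tq(y) = Dm_y(\tq) \in \p h^\tc(\tq)$; one also checks $Y = \p^\tc h^\tc(\tq)$, so by Corollary~\ref{C:local-global} applied to the modified cost $\tc$ the set $\pi_\tq(Y) = -D_q\tc(\tq,\p^\tc h^\tc(\tq))$ is convex and contains $\zero = \pi_\tq(\ty)$ (recall $\tc(\cdot,\ty)\equiv 0$). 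It thus suffices to show $\Leb{n}(\pi_\tq(Y)) \ge \frac1{C(n)}\frac{\ell_{\Pi^+}}{\min(d^+,d^-)}\frac{h^n}{\Leb{n}(\Q)}$, with $d^\pm := \dist(\tq,\Pi^\pm)$.

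Second, I would isolate the purely convex-geometric content in the flat model. For a convex body $Q\subset\R^n$, interior point $q_0$, and parallel hyperplanes $\Pi^\pm$ supporting $Q$ from opposite sides at mutual distance $L = d^+ + d^-$ (with $d^\pm = \dist(q_0,\Pi^\pm)$, WLOG $d^+=\min$), both $\Leb{n}(Q)\,\Leb{n}((Q-q_0)^\circ)$ and $L/\min(d^+,d^-)$ are invariant under invertible affine maps (for the ratio, all hyperplane-distances rescale by the common factor $|A^{-*}\nu|$). So by John's Lemma~\ref{L:John} one may assume $\cl B_1 \subseteq Q \subseteq \cl B_n$; then $\Leb{n}(Q) \ge \Leb{n}(B_1)$, $Q-q_0\subseteq B_{2n}$ gives $B_{1/2n}\subseteq (Q-q_0)^\circ$, and $Q-q_0\subseteq\{x:\langle x,\nu\rangle\le d^+\}$ gives $\nu/d^+\in(Q-q_0)^\circ$ (outside $B_{1/2n}$, since $d^+\le L/2\le n$); the cone with base $B_{1/2n}\cap\nu^\perp$ and apex $\nu/d^+$ shows $\Leb{n}((Q-q_0)^\circ)\ge c(n)/d^+ \ge \tfrac{c(n)}{L}\cdot\tfrac L{d^+}$, and as $\ell_{\Pi^+}\le L\le 2n$ we get $\Leb{n}(Q)\,\Leb{n}((Q-q_0)^\circ)\ge \frac1{C(n)}\frac{\ell_{\Pi^+}}{\min(d^+,d^-)}$.

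Third — and this is the step I expect to be the main obstacle — I would transfer this to the curved cost, proving $\pi_\tq(Y)\supseteq c(n)\,h\,(\Q-\tq)^\circ$. Given $p\in h(\Q-\tq)^\circ$, put $y:=\pi_\tq^{-1}(p)$, so $m_y(\tq)=-h$ and $D_qm_y(\tq)=p$; for $q\in\Q$, $m_y(q)+h=\int_0^1\langle -D_q\tc(\tq+s(q-\tq),y),\,q-\tq\rangle\,ds$, and Lemma~\ref{L:c estimate} (applicable since $\diam\Q\le\e_c'\le\e_c$ and $\Q\subset B\subset 4\rdot B\subset U_\ty$) controls $-D_q\tc(\cdot,y)$ along this segment — its magnitude to within a factor $1\pm\diam(\Q)/\e_c$ of $|p|$ and its direction to within $2\diam(\Q)/\e_c$ of $p/|p|$ — so the integral is $\le \langle p,q-\tq\rangle + (\text{an error}\le\tfrac12 h)$ once $\diam\Q\le\e_c/C(n)$, forcing $m_y\le 0$ on $\Q$ and hence $p\in\pi_\tq(Y)$. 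The delicate point is that the error from Lemma~\ref{L:c estimate} is controlled only relative to $\|p\|^*_{\Q-\tq}\ge\dist(\tq,\partial\Q)|p|$, which can be small; I expect the residual case (where $\diam\Q$ is not small compared with $\sqrt h$ or with $\dist(\tq,\partial\Q)$) is dispatched either by first passing to a smaller section $\{h^\tc\le -a\}$ — on which $\ell_{\Pi^+}/\min(d^\pm)$, the quantity $h^n/\Leb{n}(\Q)$, and, up to controlled distortion, the vertex subdifferential are all unchanged — or by the Loeper-maximum-principle interpolation of Lemma~\ref{lemma:propccone}(c): from $y_0\in\p^\tc h^\tc(q_0)$ at a contact point $q_0\in\Pi^+\cap\partial\Q$, $\tc$-interpolate towards $\ty$ until the travelling support function attains height $-h$ at $\tq$, producing an explicit $y^\ast\in Y$ whose gradient supplies the required "spike" of $\pi_\tq(Y)$ in the $\nu$-direction.

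Fourth, I would combine the three ingredients: $\Leb{n}(\p h^\tc(\tq))\ge\Leb{n}(\pi_\tq(Y))\ge c(n)^n h^n\,\Leb{n}((\Q-\tq)^\circ)\ge \frac{c(n)^n}{C(n)}\,\frac{h^n}{\Leb{n}(\Q)}\,\frac{\ell_{\Pi^+}}{\min(d^+,d^-)}$, which rearranges to $|h^\tc(\tq)|^n\le C(n)\,\frac{\min(d^+,d^-)}{\ell_{\Pi^+}}\,|\p h^\tc|(\{\tq\})\,\Leb{n}(\Q)$, as claimed. The hypothesis $\Q\subset B\subset 4\rdot B\subset U_\ty$ is used throughout to keep every point involved (the segments, the interpolation curve, the contact points) inside the region where $\tc$ and its derivative bounds are available.
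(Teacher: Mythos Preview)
Your overall architecture --- reduce to a lower bound on $\Leb{n}(\partial h^{\tc}(\tq))$, identify $\partial h^{\tc}(\tq) = -D_q\tc(\tq,\partial^{\tc}h^{\tc}(\tq))$ as a convex set containing the origin, and compare with a polar-body volume --- is attractive, but Step~3 contains a real gap which is not repaired by either of your suggested fixes as stated.

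The claimed inclusion $\pi_{\tq}(Y)\supseteq c(n)\,h\,(\Q-\tq)^\circ$ fails precisely in the regime the proposition targets, namely when $\tq$ is close to $\partial\Q$. Your error term coming from Lemma~\ref{L:c estimate} is of order $\tfrac{|q-\tq|^2}{\e_c}\,|p|\le \tfrac{\diam(\Q)^2}{\e_c}\,|p|$, and for $p\in c(n)h(\Q-\tq)^\circ$ the best a~priori bound is $|p|\lesssim h/\dist(\tq,\partial\Q)$. Thus the error is controlled by $h$ only when $\diam(\Q)^2\lesssim \e_c\,\dist(\tq,\partial\Q)$; the hypothesis $\diam(\Q)\le\e_c/C(n)$ gives no such control as $\dist(\tq,\partial\Q)\to 0$. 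In other words, using the gradient Lipschitz estimate to \emph{force} $y\in Y$ is hopeless in the direction where $(\Q-\tq)^\circ$ is most elongated --- exactly the direction of $\Pi^+$. Your fix~(a) (passing to a sub-section) does not help: the vertex still sits near the boundary of the new section, and the quantity you want invariant is not. Your fix~(b) is the right instinct but is not carried out, and carrying it out in only one direction does not salvage the polar-body inclusion in the remaining $n-1$ directions, where $\tq$ can equally well be near $\partial\Q$.

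The paper's proof circumvents this difficulty by abandoning the polar-body inclusion altogether. It chooses $n$ mutually orthogonal supporting hyperplanes $\Pi^1,\dots,\Pi^n$ (with $\Pi^1\in\{\Pi^+,\Pi^-\}$) and, for \emph{each} direction~$i$, runs the Loeper-interpolation argument you allude to: starting from $y_i\in\partial^{\tc}h^{\tc}(q^i)$ at the contact point $q^i\in\Pi^i\cap\partial\Q$, slide along the $\tc$-segment toward $\ty$ until the moving support touches at $\tq$. The point is that DASM (Theorem~\ref{T:DASM}) \emph{guarantees} membership in $Y$, so no error estimate is needed to place $y_i(t_i)$ there; the Lipschitz estimate is then used only in the easy direction, to get the \emph{lower} bound $|-D_q\tc(\tq,y_i(t_i))|\ge h/(2\dist(\tq,\Pi^i))$. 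The $n$ resulting gradients are almost orthogonal (again by Lemma~\ref{L:c estimate}), so their convex hull inside $\partial h^{\tc}(\tq)$ has volume $\gtrsim \prod_i h/\dist(\tq,\Pi^i)$, and a separate convex-geometric lemma (Lemma~\ref{lemma:orthogonal sections}) converts $\ell_{\Pi^+}\prod_{i\ge 2}\dist(\tq,\Pi^i)\lesssim\Leb{n}(\Q)$. So: the key idea you are missing is that the interpolation must be performed in all $n$ directions, and that its purpose is to \emph{certify} $y\in Y$ via DASM rather than via a perturbative gradient bound.
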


\begin{proof}
We fix $\tilde q {\blue \in \intr \Q}$.
Let $\Pi^i$, $i=1, \cdots n$, (with $\Pi^1$ equal either $\Pi^+ $ or $\Pi^-$)  be hyperplanes contained in $T^*_\ty \V \setminus \Q \simeq \R^n
\setminus \Q$, touching $\p \Q$, and such that
$\{\Pi^+,\Pi^2,\ldots,\Pi^n\}$ are all mutually orthogonal (so
that $\{\Pi^-,\Pi^2,\ldots,\Pi^n\}$ are {\RJM also} mutually
orthogonal).
Moreover we choose $\{\Pi^2,\ldots,\Pi^n\}$ in such a way that,
if $\pi^1(\Q)$ denotes the projection of $\Q$ on $\Pi^1$
and $\Haus{n-1}(\pi^1(\Q))$ denotes its $(n-1)$-dimensional Hausdorff measure, then
\begin{equation}
\label{eq:choice hyperpl}
C(n)\Haus{n-1}(\pi^1(\Q))\geq \prod_{i=2}^n \dist(\tilde
q,\Pi^i),
\end{equation}
for some universal constant $C(n)$. Indeed, as $\pi^1(\Q)$ is
convex, by Lemma~\ref{L:John} we can find an ellipsoid $E$ such
that $E \subset \pi^1(\Q) \subset (n-1) \rdot E$, and for instance we can
choose $\{\Pi^2,\ldots,\Pi^n\}$ among the hyperplanes orthogonal
to the axes of the ellipsoid (for each axis we have two possible
hyperplanes, and we can choose either of
them).

Each hyperplane $\Pi^i $ touches $\Q$ from outside, say at $q^i \in
T^*_\ty \V$. Let $p_i \in T_\ty V$ be the outward (from $\Q$) unit
vector at $q^i$ orthogonal to $\Pi^i$. Then $s_i p_i \in \p h^\tc
(q^i)$ for some $s_i >0$, and by Corollary \ref{C:local-global}
there exists $y_i \in \p^\tc h^\tc(q^i)$ such that $$ -D_q \tc(
q^i , y_i) = s_i p_i .
$$
Define $y_i(t)$ as
$$
-D_q \tc(q^i, y_i (t) ) = t\, s_i p_i,
$$
i.e. $y_i(t)$ is the $\tc$-segment from $\ty$ to $y_i$ with respect to $q^i$.
As in the proof of Lemma~\ref{lemma:propccone}(c),
{\blue there exists}
$0 <t_i \le 1$ such that
{\blue  the function
\begin{align*}
m_{y_i (t_i)} (\cdot) : = -\tc(\cdot , y_i (t_i) ) + \tc(\tilde q, y_i (t_i)) + h^\tc (\tilde q)
\end{align*}
satisfies
\begin{align}\label{eq:m-y-i-t-i}
m_{y_i (t_i)} \le 0 \qquad \text{ on $\Q$ with equality at $q^i$,}
\end{align}
see Figure \ref{figc-cone}.
By the definition of $h^\tc$,  \eqref{eq:m-y-i-t-i} implies
$y_i (t_i) \in \p^\tc h^\tc (\tilde  q) \cap \p^\tc h^\tc (q^i)$,
$$
-D_q \tc (\tilde q, y_i (t_i)) \in \p h^\tc (\tilde q)
\quad \text{and} \quad t_i s_i p_i =-D_q \tc (q^i, y_i (t_i)) \in \p h^\tc (q^i).
$$
\begin{figure}
\begin{center}
\centerline{\epsfysize=1.7truein\epsfbox{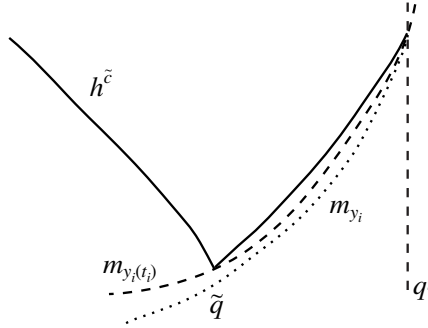}}\caption{{\small The dotted line represents the graph of $m_{y_i}:=-\tc(\cdot , y_i ) + \tc(\tilde q, y_i) + h^\tc (\tilde q)$,
while the dashed one represents the graph of $m_{y_i(t_i)}:=-\tc(\cdot
, y_i(t_i) ) + \tc(\tilde q, y_i(t_i)) + h^\tc (\tilde q)$. The
idea is that, whenever we have $m_{y_i}$ a supporting function for
$h^\tc$ at a point $q^i \in \p \Q$, we can let $y$ vary
continuously along the $\tc$-segment from $\ty$ to $y_i$ with
respect to $q^i$, to obtain a supporting function $m_{y_i(t_i)}$ which
touches $h^\tc$ also {\blue at $\tq$ as well as $q^i$}.}}\label{figc-cone}
\end{center}
\end{figure}
Note that the sub-level set $S_{y_i(t_i)} :=\{z \in U_\ty \ | \  m_{y_i (t_i)} \le 0\}$ is convex.
Draw from $\tq$ a half-line orthogonal to $\Pi^i$. Let $\tq^i$ be point where this line meets with the boundary $\p S_{y_i(t_i)}$. By the assumption
$\Q \subset B \subset 4 \rdot B \subset U_\ty$, we see $\tq^i \in U_\ty$. By convexity of $S_{y_i(t_i)}$, we have
\begin{align}\label{eq:tqtqi}
|\tq - \tq^i| \le \dist(\tq, \Pi^i) .
\end{align}
Let ${\rm diam} \, \Q \le \d_n\e_c$
for some small constant {\blue $0< \d_n <1/3$} to be fixed.
By \eqref{slope compare} and the trivial inequality $\dist(\tq, \Pi^i) \le \diam \Q$, we have{\red
\begin{equation}
\label{eq:close gradients}
\begin{split}
\big|- D_q \tc(\tq , y_i (t_i))+D_q \tc(\ell \tq+(1-\ell)\tq^i, y_i (t_i) ) \big|
& \le \frac{1}{\e_c} |\tq - q^i | \, \big|-D_q \tc(\tq , y_i (t_i)\big|\\
&\le  \delta_n\big|-D_q \tc( \tq, y_i (t_i))\big|
\end{split}
\end{equation}
for all $0 \leq \ell \leq 1$.
Therefore, we see
\begin{align*}
|h^\tc  (\tilde q)|& = |\tc (\tq,y_i (t_i)) - \tc (\tq^i,y_i (t_i))|
\\
& =\Big| \int_0^1 - D_q \tc\big(\ell \tq+(1-\ell)\tq^i, y_i (t_i)\big) \cdot (\tq - \tq^i ) d\ell \Big|
\\
& \le (1+\delta_n)  \big| -D_q \tc(\tq , y_i (t_i)) \big| \big|  \tq - \tq^i \big|  \qquad \hbox{ (by  \eqref{eq:close gradients})} \\
& \le (1+\delta_n) \big| -D_q \tc(\tq , y_i (t_i)) \big| \dist (\tq, \Pi^i)   \qquad \hbox{ by \eqref{eq:tqtqi}).  }
\end{align*}
Thanks to {\green this estimate it} follows }
\begin{align}\label{D c at tq lower}
| -D_q \tc (\tq, y_i (t_i))|
\ge \frac{|h^\tc  (\tilde q)|}{2\dist (\tilde q, \Pi^i)}.
\end{align}
{\green Moreover, similarly to \eqref{eq:close gradients}, we have
\begin{align} \label{eq:Dcqitq}
&\big|   - D_q \tc(q^i , y_i (t_i))+D_q \tc(\tq, y_i (t_i) )    \big|
\le \d_n \big| -D_q \tc(\tq, y_i (t_i) )\big| .
\end{align}
Since} the vectors $\{-D_q \tc(q^i, y_i (t_i))\}_{i=1}^n$ are mutually orthogonal, \eqref{eq:Dcqitq} implies that
for $\d_n$ small enough the convex hull of $\{-D_q \tc(\tilde q, y_i (t_i))\}_{i=1}^n \subset \p h^\tc (\tilde q)$ has measure of order
$$\prod_{i=1}^n\big|-D_q \tc(\tq , y_i (t_i))\big|.$$
Thus, by the lower bound
\eqref{D c at tq lower} and the convexity of $\p h^\tc (\tilde q)$, we obtain}
\begin{align*}
\Leb{n}(\p h^\tc (\tilde q)) & \ge C(n) \frac{|h^\tc (\tilde q)|^n}{\prod_{i=1}^n \dist(\tilde q, \Pi^i)}.
\end{align*}
Since $\Pi^1$ was either $\Pi^+$ or $\Pi^-$, we have proved that
$$
|h^\tc(\tilde q)|^n \leq C(n) {\RJM |\p h^\tc|(\{\tilde q\})}
\min\{\dist(\tilde q,\Pi^+),\dist(\tilde q,\Pi^-)\} \prod_{i=2}^n
\dist(\tilde q,\Pi^i).
$$
To conclude the proof, we apply Lemma~\ref{lemma:orthogonal sections} below
with $\Q'$ given by the segment obtained intersecting
$\Q$ with a line orthogonal to $\Pi^+$. 
Combining
that lemma with \eqref{eq:choice hyperpl}, we obtain
$$
C(n) {\RJM \Leb{n}(\Q)} \geq \ell_{\Pi^+} \prod_{i=2}^n \dist(\tilde
q,\Pi^i),
$$
and last two inequalities prove the proposition (taking $C(n) \ge 1/\delta_n$ larger if necessary).
\end{proof}

\begin{lemma}[Estimating a convex volume using one slice and an orthogonal projection]
\label{lemma:orthogonal sections}
Let $\Q$ be a convex set in $\R^n = \R^{n'} \times \R^{n''}$.  Let
$\pi', \pi''$ denote the projections to the components $\R^{n'}$,
$\R^{n''} $, respectively.
Let $\Q'$ be a slice orthogonal to the second component, that is
$$
\Q' = (\pi'')^{-1}(\bar x'')\cap  \Q \qquad\text{for some }\bar x'' \in \pi''(\Q).
$$
Then there exists a constant $C(n)$, depending only on $n=n'+n''$,
such that
$$
C(n) \Leb{n}(\Q) \ge \Haus{n'}(\Q') \Haus{n''}(\pi'' (\Q)),
$$
where $\Haus{d}$ denotes the $d$-dimensional Hausdorff measure.
\end{lemma}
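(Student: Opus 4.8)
The plan is to reduce the statement to the elementary fact that for a convex set, a slice and a projection are ``quantitatively transverse'' in a sense controlled only by the dimension. First I would observe that the inequality is affine-invariant: if $L = L' \times L''$ is a product of invertible affine maps acting on $\R^{n'}$ and $\R^{n''}$ respectively, then replacing $\Q$ by $L^{-1}(\Q)$ multiplies the left side by $|\det L|^{-1}$ and the right side by $|\det L'|^{-1}|\det L''|^{-1} = |\det L|^{-1}$, so both sides scale the same way. Hence it suffices to prove the claim after normalizing $\Q$ by John's Lemma (Lemma~\ref{L:John}): up to such an affine change of coordinates we may assume $\cl B_1 \subset \Q \subset \cl B_n$ as subsets of $\R^n$.

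Once $\Q$ is John-normalized, both factors on the right are under control. On one hand $\Haus{n''}(\pi''(\Q)) \le \Haus{n''}(\cl B_n \cap \R^{n''}) = \omega_{n''} n^{n''}$, a dimensional constant; on the other hand $\Haus{n'}(\Q')$, being the $n'$-measure of a convex slice contained in $\cl B_n$, is at most $\omega_{n'} n^{n'}$. Meanwhile $\Leb{n}(\Q) \ge \Leb{n}(B_1) = \omega_n > 0$ is bounded below by a dimensional constant. Therefore in the normalized picture
\[
\Haus{n'}(\Q')\,\Haus{n''}(\pi''(\Q)) \le \omega_{n'}\omega_{n''} n^{n} \le \frac{\omega_{n'}\omega_{n''} n^n}{\omega_n}\,\Leb{n}(\Q),
\]
which is exactly the desired inequality with $C(n) := \omega_{n'}\omega_{n''} n^n / \omega_n$ (any crude bound on John's constant being absorbed here). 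Since any choice of $\bar x'' \in \pi''(\Q)$ and any splitting $n = n' + n''$ is allowed, this $C(n)$ depends only on $n$.

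The only subtlety — and the step I would be most careful about — is the affine reduction itself: one must check that John's normalizing map can be taken to respect the product structure $\R^{n'} \times \R^{n''}$, i.e. that it suffices to use $L = L' \times L''$ rather than a general affine map, since a general $L$ would not send orthogonal slices to orthogonal slices nor commute with $\pi''$. In fact one does \emph{not} need a product-form John map: apply John's Lemma to the \emph{full} set $\Q$ in $\R^n$ to get a general affine $L$ with $\cl B_1 \subset L^{-1}(\Q) \subset \cl B_n$, and simply note that the three quantities $\Leb{n}(\Q)$, $\Haus{n'}(\Q')$, $\Haus{n''}(\pi''(\Q))$ are each comparable, up to a factor depending only on $\|L\|$, $\|L^{-1}\|$ (hence only on $n$ via the containment $\cl B_1 \subset L^{-1}(\Q) \subset \cl B_n$), to the corresponding quantities for $L^{-1}(\Q)$ and its slice/projection — because $L$ is bi-Lipschitz with dimensional constants on the relevant bounded regions, and Lipschitz maps distort $d$-dimensional Hausdorff measure by at most their Lipschitz constant to the $d$-th power. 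Carrying out that bookkeeping and then invoking the normalized estimate above completes the proof.
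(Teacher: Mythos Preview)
Your argument has a genuine gap in the final paragraph. The claim that $\|L\|$ and $\|L^{-1}\|$ are bounded ``only on $n$ via the containment $\cl B_1 \subset L^{-1}(\Q) \subset \cl B_n$'' is false: the John containment controls the \emph{shape} of $L^{-1}(\Q)$, but says nothing about the absolute singular values of $L$. For instance, if $\Q \subset \R^2 = \R^1 \times \R^1$ is a long thin ellipse whose major axis is tilted at $45^\circ$ to the coordinate axes, the John map $L$ has singular values whose ratio equals the eccentricity of $\Q$, which can be arbitrarily large; $L$ is then bi-Lipschitz only with constants that blow up with the eccentricity. Since a non-product $L$ neither sends orthogonal slices to orthogonal slices nor commutes with $\pi''$, you cannot undo the distortion after the fact using only dimensional constants, and the comparison you propose breaks down.

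The fix is the one you almost wrote and then backed away from: use a \emph{product} affine map. But don't try to get it from John's Lemma applied to all of $\Q$; instead apply John's Lemma to $\pi''(\Q) \subset \R^{n''}$ alone, obtaining a (determinant-one, say) affine map $L'':\R^{n''}\to\R^{n''}$ with $B_r \subset L''(\pi''(\Q)) \subset B_{n''r}$, and extend it to $\tilde L(x',x'') = (x', L'' x'')$. This $\tilde L$ is a product map, so your affine-invariance observation applies cleanly: $\Leb{n}(\Q)$, $\Haus{n'}(\Q')$ and $\Haus{n''}(\pi''(\Q))$ are all unchanged (or transform by the same factor), and you are reduced to the case $B_r \subset \pi''(\Q) \subset B_{n''r}$. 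From there the paper finishes not by a second normalization but by an explicit cone construction: it picks points $x_1,\ldots,x_{n''} \in \Q$ projecting to $r\hat e_1,\ldots,r\hat e_{n''}$ and shows the convex hull of $\Q'$ with these points already has volume $\gtrsim r^{n''} \Haus{n'}(\Q') \gtrsim \Haus{n''}(\pi''(\Q))\Haus{n'}(\Q')$. Your ball-in-ball-out idea would also work after this product normalization, but as written it does not.
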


\begin{figure}
\begin{center}
\centerline{\epsfysize=1.7truein\epsfbox{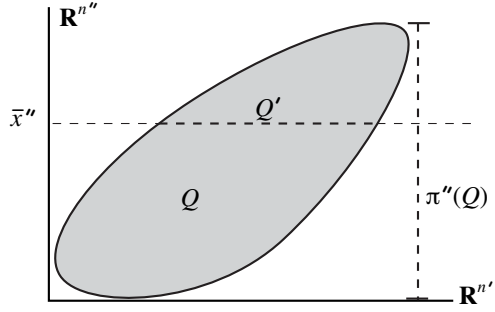}}\caption{{\small The volume of any convex set always controls the
product (measure of one slice) $\cdot$ (measure of the projection
orthogonal to the slice).}}\label{figconvex}
\end{center}
\end{figure}

\begin{proof}
Let $L:\R^{n''} \to \R^{n''}$ be an affine map with determinant $1$ given by
Lemma~\ref{L:John} such that $B_r \subset L(\pi''(\Q)) \subset B_{n''r}$ for some $r>0$.
Then, if we extend $L$ to the whole $\R^n$ as $\tilde L(x',x'')=(x',Lx'')$, we have
$
\Leb{n}(L(\Q))=\Leb{n}(\Q)$, $\Haus{n'}(\tilde L(\Q'))=\Haus{n'}(\Q')$, and
$$
\Haus{n''}(\pi'' (\tilde L(\Q)))=\Haus{n''}(L(\pi'' (\Q)))=\Haus{n''}(\pi'' (\Q)).
$$
Hence, we can assume from the beginning that $B_r \subset \pi''(\Q)
\subset B_{n''r}$. Let us now consider the point $\bar x''$, and
we fix an orthonormal basis $\{\hat e_1,\ldots, \hat e_{n''}\}$ in
$\R^{n''}$ such that $\bar x''=c \hat e_1$ for some $c\leq 0$.
Since $\{r \hat e_1,\ldots,r \hat e_{n''}\} \subset \pi''(\Q)$,
there exist points $\{x_1,\ldots,x_{n''}\} \subset \Q$ such that
$\pi''(x_i)=r \hat e_i$. Let $C'$ denote the convex hull of $\Q'$
with $x_1$, and let $V'$ denote the $(n'+1)$-dimensional strip
obtained taking the convex hull of $\R^{n'}\times\{\bar x''\}$
with $x_1$. Observe that $C'\subset V'$, so
\begin{equation}
\label{eq:C'}
\Haus{n'+1}(C')=\frac{1}{n'+1}\dist(x_1,\R^{n'}\times\{\bar x''\})\Haus{n'}(\Q') \geq \frac{r}{n'+1}\Haus{n'}(\Q').
\end{equation}
We now remark that, since $\pi''(x_i)=r \hat e_i$ and $\hat e_i
\perp V'$ for $i=2,\ldots,n''$, we have $\dist(x_i,V')=r$ for all
$i=2,\ldots,n''$. Moreover, if $y_i\in V'$ denotes the closest
point to $x_i$, then the segments joining $x_i$ to $y_i$ parallels
$\hat e_i$, hence these segments are all mutually orthogonal, and
they are all orthogonal to $V'$ too. From this fact it is easy to
see that, if we define the convex hull
$$
C:=\co(x_2,\ldots,x_{n''},C'),
$$
then, since $|x_i-y_i|=r$ for $i=2,\ldots,n''$, by \eqref{eq:C'} and the inclusion $\pi''(\Q) \subset B_{n''r}\subset \R^{n''}$ we get
$$
\Leb{n}(C)= \frac{(n'+1)!}{n!}\Haus{n'+1}(C') r^{n''-1} \geq
\frac{n'!}{n!}\Haus{n'}(\Q') r^{n''} \geq
C(n)\Haus{n'}(\Q')\Haus{n''}(\pi'' (\Q)).
$$
This concludes the proof, as $C \subset \Q$.
\end{proof}

\subsection{{\blue Alexandrov type upper bounds}}\label{S:alex}

The next Alexandrov type lemma
holds for localized sections $\Q$ of $\tc$-convex functions.


\begin{lemma}[Alexandrov type estimate and lower barrier]
\label{L:prop-infleq}
Assume \Bzero--\Bthree, and
let $\tu:\cl \U_\ty \longmapsto \R$ be a $\tilde c$-convex function
from Theorem \ref{thm:apparently convex}.
Let $\Q$ denote the section $\{ \tilde u \le 0\}\subset \cl \U_\ty$, assume $\tu =0$ on $\partial \Q$,
and fix $\tilde q \in \intr \Q$.
Let $\Pi^+,\Pi^-$ be two parallel hyperplanes contained in $\R^n
\setminus \Q$ and touching $\p \Q$ from two opposite sides.
{\blue
We also assume that {\red there exists a ball $B$ such that}
$\Q \subset B \subset 4 \rdot B \subset U_{\bar y}$.}
Then
there exist {\RJM $\e_c'>0$ (given by Proposition
\ref{P:partial c-cone})} such that, if $\diam(\Q) \leq {\RJM \e_c'}$ then
$$
|\tu(\tilde q)|^n \leq C(n) \gamma^+_\tc(\Q \times \V)
\frac{\min\{\dist(\tilde q,\Pi^+),\dist(\tilde
q,\Pi^-)\}}{\ell_{\Pi^+}}|\p^\tc\tu|(\Q) \Leb{n}(\Q),
$$
where $\ell_{\Pi^+}$ denotes the maximal length among all
the segments obtained by intersecting $\Q$ with a line orthogonal
to $\Pi^+$,
and {\blue $\gamma^+_\tc(\Q \times \V)$} is defined as in (\ref{Jacobian bound}).
\end{lemma}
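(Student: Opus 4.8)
The plan is to dominate $\tu$ from below by the $\tc$-cone it generates over its own section, apply the cone mass estimate of Proposition~\ref{P:partial c-cone}, and convert the resulting bound on the ordinary Monge--Amp\`ere mass of the cone into a bound on $|\p^\tc\tu|(\Q)$ via a Jacobian comparison.

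First, if $\tu(\tilde q)=0$ the asserted inequality is trivial, so I may assume $\tu(\tilde q)<0$. Since $\tu$ is continuous and $\Q\subset B\subset 4 \rdot B\subset U_{\bar y}$, the section $\Q=\{\tu\le 0\}$ is a compact convex subset of $U_{\bar y}$ (convexity from Theorem~\ref{thm:apparently convex}), so I may introduce the $\tc$-cone $h^\tc$ generated by $\tilde q$ and $\Q$ with height $-\tu(\tilde q)>0$ as in Definition~\ref{def:ccone}. By Lemma~\ref{lemma:propccone}, $h^\tc(\tilde q)=\tu(\tilde q)$, $h^\tc\equiv0$ on $\p\Q$, $h^\tc$ has convex level sets, and $\p^\tc h^\tc(\tilde q)\subseteq\p^\tc\tu(\Q)$. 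Because $\diam(\Q)\le\e_c'$ and $\Q\subset B\subset 4 \rdot B\subset U_{\bar y}$, all the hypotheses of Proposition~\ref{P:partial c-cone} are satisfied by $h^\tc$ and the hyperplanes $\Pi^\pm$, and it yields
$$
|\tu(\tilde q)|^n=|h^\tc(\tilde q)|^n\le C(n)\,\frac{\min\{\dist(\tilde q,\Pi^+),\dist(\tilde q,\Pi^-)\}}{\ell_{\Pi^+}}\,|\p h^\tc|(\{\tilde q\})\,\Leb{n}(\Q).
$$

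It then remains to show $|\p h^\tc|(\{\tilde q\})=\Leb{n}(\p h^\tc(\tilde q))\le\gamma^+_\tc(\Q\times\V)\,|\p^\tc\tu|(\Q)$. The key point is that $h^\tc$, being a (constrained) supremum of $\tc$-mountains $-\tc(\cdot,y)+\mathrm{const}$, is $\tc$-convex and semiconvex, so its ordinary subdifferential at $\tilde q$ equals the convex hull of reachable gradients; each such gradient has the form $-D_q\tc(\tilde q,y)$ with $y$ lying in the closed set $\p^\tc h^\tc(\tilde q)$, and since Corollary~\ref{C:local-global} makes $-D_q\tc(\tilde q,\p^\tc h^\tc(\tilde q))$ convex, this forces $\p h^\tc(\tilde q)=-D_q\tc(\tilde q,\p^\tc h^\tc(\tilde q))$. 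As the map $y\mapsto-D_q\tc(\tilde q,y)$ is injective by \Bone\ with Jacobian $|\det D^2_{qy}\tc(\tilde q,y)|\le\gamma^+_\tc(\Q\times\V)$ on $\Q\times\V$ (see \eqref{Jacobian bound}), the area formula gives $\Leb{n}(\p h^\tc(\tilde q))\le\gamma^+_\tc(\Q\times\V)\,\Leb{n}(\p^\tc h^\tc(\tilde q))$; combining with $\p^\tc h^\tc(\tilde q)\subseteq\p^\tc\tu(\Q)$ yields the claimed bound on $|\p h^\tc|(\{\tilde q\})$. Substituting it into the displayed estimate completes the argument.

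The step I expect to be the main obstacle is identifying $\p h^\tc(\tilde q)$ with $-D_q\tc(\tilde q,\p^\tc h^\tc(\tilde q))$, i.e. that the ordinary and the $\tc$-subdifferential of the cone at a point correspond under the cost-exponential change of variables. This is precisely where the $c^*$-convexity of contact sets (Corollary~\ref{C:local-global}), hence Loeper's maximum principle and hypothesis \Bthree, is used in this lemma; the rest is the bookkeeping of Jacobian bounds together with the properties of $\tc$-cones already recorded in Lemma~\ref{lemma:propccone} and the cone mass estimate of Proposition~\ref{P:partial c-cone}.
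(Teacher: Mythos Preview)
Your proposal is correct and follows essentially the same approach as the paper: build the $\tc$-cone $h^\tc$ over $\Q$ at height $-\tu(\tilde q)$, invoke Lemma~\ref{lemma:propccone}(d) for $\p^\tc h^\tc(\tilde q)\subset\p^\tc\tu(\Q)$, use Corollary~\ref{C:local-global} to identify $\p h^\tc(\tilde q)=-D_q\tc(\tilde q,\p^\tc h^\tc(\tilde q))$ and hence bound $|\p h^\tc|(\{\tilde q\})\le\gamma^+_\tc|\p^\tc h^\tc|(\{\tilde q\})$, and then apply Proposition~\ref{P:partial c-cone}. Your justification for the subdifferential identity via reachable gradients and the convexity furnished by Corollary~\ref{C:local-global} is a welcome elaboration of a step the paper states in one line.
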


\begin{proof}
 Fix $\tilde q \in \Q$. Observe that $\tu = 0$ on $\p \Q$ and consider the $\tc$-cone $h^\tc$
generated by $\tilde q$ and $\Q$ of height $-h^\tc(\tq)=-\tu(\tq)$ as in \eqref{c-cone}.
From Lemma~\ref{lemma:propccone}(d) we have
$$
|\p^\tc h^\tc|(\{\tilde q\}) \le |\p^\tc\tu|(\Q),
$$
\noindent
and from Loeper's local to global principle, Corollary \ref{C:local-global} above,
$$
\p h^\tc(\tq)=-D_q \tc(\tq,\p^\tc h^\tc(\tq)).
$$
Therefore,
$$
|\p h^\tc|(\{\tq\}) \le \| \det D^2_{qy} \tc\|_{C^0 (\{\tq\} \times \V)} |\p^ch^c|(\{\tq\}).
$$
The lower bound on $|\p h^\tc|(\{\tilde q\})$ comes from \eqref{p c-cone lower}. This finishes the proof.
\end{proof}


Combining {\blue this with Theorem~\ref{T:ratio}, we get the following important estimates:

\begin{theorem}[Alexandrov type upper bound]
\label{thm:estimate} Assume \Bzero--\Bthree, and let $\tu:\cl
\U_\ty \longmapsto \R$ be a $\tilde c$-convex function from
Theorem \ref{thm:apparently convex}. There exist $\e_c' >0$
small, depending only on the dimension and the cost function, and constant $C(n)$,
depending only on the dimension, such that the
following holds:

Letting $\Q$ denote the section $\{ \tilde u \le 0\}\subset \U_\ty$, assume
$|\p^\tc \tu| \leq 1/\l$ in $\Q$ and $\tu = 0$ on $\p \Q$.
We also assume that {\red there exists a ball $B$ such that} $\Q \subset B \subset 4 \rdot B \subset \U_{\bar y}$, and that
$\diam(\Q) \leq \e_c'$.
 For $\frac{1}{2n} < t\le 1$, let $q_t \in \Q$ be a point such that
$q_t \in t \rdot \p \Q$, where $t \rdot \partial \Q$ denotes the dilation with the factor $t$
with respect to the center of $E$, the ellipsoid given by John's Lemma (see \eqref{E:well-centered}). Then,
\begin{equation}\label{vardist}
|\tu (q_t)|^n \leq C(n) \frac{\gamma_\tc^+}{\lambda} (1-t)^{1/2^{n-1}} \Leb{n}(\Q)^2,
\end{equation}
where $\gamma^+_\tc = \gamma^+_\tc(\Q \times \V)  $
is defined as in \eqref{Jacobian bound}, which satisfies  $\gamma^+_\tc \le \gamma^+_c \gamma^-_c$ from Corollary
\ref{C:Jacobian transform}.
Moreover,
\begin{equation}\label{inf|\Q|}
\frac{|\inf_{\Q} \tu|^n}{ \Leb{n}(\Q)^2}
\leq \ C(n)
\frac{\gamma_\tc^+}{\lambda}.
\end{equation}
 \end{theorem}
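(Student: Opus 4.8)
The plan is to derive Theorem~\ref{thm:estimate} by combining the Alexandrov-type estimate of Lemma~\ref{L:prop-infleq} with the geometric estimate for convex bodies in Theorem~\ref{T:ratio}. First I would set up the normalization: by John's Lemma (Lemma~\ref{L:John}) applied to the convex section $\Q$, choose the ellipsoid $E$ with $E \subset \Q \subset n\rdot E$, and after an affine change of coordinates (which, by the affine invariance discussed in Subsection~\ref{SS:renormal}, does not change the ratios we care about) assume $E$ is centered at the origin. Note that the volume ratios $\Leb{n}(\Q)/\Leb{n}(E)$ are then pinched between $1$ and $n^n$, so it suffices to prove the estimates with $\Leb{n}(\Q)^2$ replaced by $\Leb{n}(E)^2$ up to dimensional constants.

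For the proof of \eqref{vardist}, fix $t \in (\tfrac{1}{2n},1]$ and the point $q_t \in t\rdot\partial\Q$. Set $s := 1-t$ and $\tilde Q$ equal to (a well-centered representative of) $\Q$, so that $q_t \in (1-s)\rdot\partial\tilde Q$. Apply Theorem~\ref{T:ratio} with $s_0$ chosen to accommodate the range $t > \tfrac1{2n}$ (i.e.\ $s_0 = 1 - \tfrac1{2n}$, which gives a dimensional constant $c(n,s_0) = c(n)$): there is a line $\ell$ through the origin and a supporting hyperplane $\Pi$ of $\tilde Q$ orthogonal to $\ell$ with $\dist(q_t,\Pi) \le c(n)\, s^{1/2^{n-1}} \diam(\ell\cap\tilde Q)$. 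Now take $\Pi^+ := \Pi$ and $\Pi^-$ the opposite supporting hyperplane orthogonal to $\ell$; then $\ell_{\Pi^+} \geq \diam(\ell\cap\tilde Q)$ — actually I should be slightly careful and instead just use $\ell_{\Pi^+}$ directly, noting $\dist(q_t,\Pi^+) \le c(n)\, s^{1/2^{n-1}}\ell_{\Pi^+}$, so that the ratio $\min\{\dist(q_t,\Pi^+),\dist(q_t,\Pi^-)\}/\ell_{\Pi^+} \le c(n)\,(1-t)^{1/2^{n-1}}$. Feeding this into Lemma~\ref{L:prop-infleq} (whose hypotheses $\diam(\Q)\le\e_c'$ and $\Q \subset B \subset 4\rdot B \subset \U_{\bar y}$ are exactly those assumed here, and using $|\p^\tc\tu|(\Q) \le \tfrac1\lambda\Leb{n}(\Q)$) gives
\begin{equation*}
|\tu(q_t)|^n \le C(n)\,\gamma_\tc^+\,\frac{\dist(q_t,\Pi^+)}{\ell_{\Pi^+}}\,\frac1\lambda\Leb{n}(\Q)^2 \le C(n)\,\frac{\gamma_\tc^+}{\lambda}\,(1-t)^{1/2^{n-1}}\Leb{n}(\Q)^2,
\end{equation*}
which is \eqref{vardist}.

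The bound \eqref{inf|\Q|} on $|\inf_\Q\tu|$ then follows by a limiting/extremal argument. Since $\tu$ is level-set convex with $\{\tu\le 0\}=\Q$ and $\tu = 0$ on $\p\Q$, its infimum is attained at some interior point $q_* \in \intr\Q$. Write $q_* \in t_* \rdot \partial\Q$ for some $t_* \in [0,1)$ relative to the John center of $E$. If $t_* > \tfrac1{2n}$, then \eqref{vardist} applied at $q_t = q_*$ already gives $|\tu(q_*)|^n \le C(n)\gamma_\tc^+\lambda^{-1}\Leb{n}(\Q)^2$ since $(1-t_*)^{1/2^{n-1}} \le 1$. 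If instead $t_* \le \tfrac1{2n}$, then $q_*$ lies in the ``inner core'' $\tfrac1{2n}\rdot E \subset \tfrac1{2}\rdot E_\delta$-type region, and here one invokes the lower Alexandrov estimate of Theorem~\ref{thm:lower Alex} in tandem — or more simply, one uses Lemma~\ref{L:prop-infleq} directly at $q_*$ with the two parallel supporting hyperplanes orthogonal to the shortest John axis, noting that for a point in the core the ratio $\min\{\dist(q_*,\Pi^+),\dist(q_*,\Pi^-)\}/\ell_{\Pi^+}$ is bounded by a dimensional constant (it is at most $1$ always, and one needs no decay factor for \eqref{inf|\Q|}). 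Combining the two cases yields \eqref{inf|\Q|}.

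The main obstacle I anticipate is the bookkeeping around the well-centeredness hypothesis in Theorem~\ref{T:ratio} versus the John center used to define the dilations $t\rdot\partial\Q$: one must check that the ellipsoid $E$ from John's Lemma is (after the affine normalization) a legitimate choice of the centered ellipsoid $E$ in \eqref{E:well-centered}, so that $(1-s)\rdot\partial\tilde Q$ in Theorem~\ref{T:ratio} coincides with the dilation $t\rdot\partial\Q$ appearing in the statement. A secondary subtlety is ensuring the constant $c(n,s_0)$ in \eqref{E:key bound} can be absorbed into a purely dimensional $C(n)$ — this is fine precisely because we restrict to $t > \tfrac1{2n}$, so $s_0$ can be fixed depending only on $n$. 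Everything else is a routine chain of the already-established estimates.
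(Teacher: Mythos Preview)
Your proposal is correct and matches the paper's approach: combine Theorem~\ref{T:ratio} (with $s_0 = 1 - \tfrac{1}{2n}$) and Lemma~\ref{L:prop-infleq} to obtain \eqref{vardist}, then treat \eqref{inf|\Q|} by handling the inner core separately via Lemma~\ref{L:prop-infleq} with hyperplanes orthogonal to a John axis (the paper uses the longest axis, you say shortest --- either works). Two small caveats: your parenthetical that the ratio $\min\{\dist(q_*,\Pi^+),\dist(q_*,\Pi^-)\}/\ell_{\Pi^+}$ is ``at most $1$ always'' is false (for a general direction the maximal chord of a convex body orthogonal to $\Pi$ can be much smaller than the width between $\Pi^+$ and $\Pi^-$), but your actual choice of John-axis hyperplanes does yield the needed dimensional bound since then $\ell_{\Pi^+}\ge 2a_i$ while the width is at most $2na_i$; and the affine renormalization you invoke at the outset is unnecessary --- a translation to center $E$ at the origin suffices, and indeed the paper explicitly remarks that it does not use the affine invariance of Subsection~\ref{SS:renormal}.
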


\begin{remark}{\rm
 Aficionados of the Monge-Amp\`ere theory may be less surprised by
  these estimates once it is recognized that the localization in
coordinates ensures the cost is approximately affine, at least in
one of its two variables. However, no matter how well we approximate,
the non-affine nature of the cost function {\RJM remains relevant and persistent.
Controlling the departure from affine
 is vital to our analysis and requires the new ideas
developed above.}}
\end{remark}
}

\begin{proof}[\bf Proof of Theorem~\ref{thm:estimate}]
For $0<s_0 \le t\le 1$,
let $q_t \in \Q$ be a point such that
$q_t \in t \partial \Q$.
By Theorem \ref{T:ratio} applied with $s_0=1/(2n)$ we can find
$\Pi^+ \ne \Pi^-$ parallel hyperplanes contained in $T_\ty^*\V
\setminus \Q$, supporting $\Q$ from two opposite sides, and such that
$$
\frac{\min\{\dist(\tilde q,\Pi^+),\dist(\tilde
q,\Pi^-)\}}{\ell_{\Pi^+}} \leq C(n) (1-t)^{1/2^{n-1}}.
$$
Then \eqref{vardist} follows from  Lemma~\ref{L:prop-infleq} and the assumption
$|\p^\tc \tu| \le 1/\l$.

To prove \eqref{inf|\Q|}, observe that, since ${\RJM \frac{1}{2n} \rdot \Q \subset \frac{1}{2}} \rdot E$,
$$
\frac{|\inf_{\Q\setminus ({\RJM \frac{1}{2}} \rdot E)} \tu|^n}{ \Leb{n}(\Q)^2}
\leq \ C(n)
\frac{\gamma_\tc^+}{\lambda}(1-\frac{1}{2n})^{1/2^{n-1}}.
$$
On the other hand, taking  $\Pi^+$ and $\Pi^-$ orthogonal to one of the longest axes of $E$ and choosing
$q \in {\RJM \frac{1}{2} }\rdot E$ in Lemma \ref{L:prop-infleq} yields
\begin{align*}
|\tu(q)|^n & \leq C(n) \frac{\gamma_\tc^+}{\lambda} n \Leb{n}(\Q)^2
, \qquad \forall q \in {\RJM\textstyle \frac{1}{2}} \rdot E.
\end{align*}
{\blue  Combining these two estimates we obtain \eqref{inf|\Q|}
to complete the proof.}
\end{proof}

\section{The contact set is either a single point or crosses the domain}
\label{S:contact}
{\blue The previous estimates (Theorems~\ref{thm:lower Alex} and \ref{thm:estimate})
{\RJM may have some} independent interest, but they also provide key ingredients
{\RJM which we use to deduce injectivity and H\"older continuity of optimal maps.}
In this and the subsequent section,} we
prove the strict $c$-convexity of the $c$-convex optimal transport potentials
$u:\cl \U \longmapsto \R$, meaning
$\p^c u(x)$ should be disjoint from $\p^c u(\tx)$ whenever $x,\tx
\in \U^\l$ are distinct. {\RJM This shows the injectivity of optimal maps, and}
is accomplished in Theorem~\ref{T:strict convex}.
In the present section we show that, if the contact set 
does not consist of a single point, then it  extends to the
boundary of $\U$.
Our method relies on the \Bthree\ assumption on the cost $c$.


Recall that a point $x$ of a convex set $S \subset \R^n$ is {\em
exposed} if there is a hyperplane supporting $S$ exclusively at
$x$. Although the {\em contact set} $S := \p^\cs u^\cs(\ty)$ may
not be convex, it appears convex from $\ty$
 by Corollary \ref{C:local-global}, meaning
its image $q(S) \subset \U_\ty$ in the coordinates \eqref{q of x}
is convex.
The following theorem shows this convex set is either a singleton,
or contains a segment which stretches across the domain. We prove
it by showing the solution geometry near certain exposed points of
$q(S)$ inside $\U_\ty$ would be inconsistent with the bounds {\blue (Theorem~\ref{thm:lower Alex} and \ref{thm:estimate})}
established in the previous section.

\begin{theorem}[The contact set is either a single point or crosses the domain]
\label{thm:noexposed}
Assume \Bzero--\Bthree, and
let $\uu$ be a $c$-convex solution of \eqref{eq:cMA} with $\U^\l \subset \U$ open.
Fix $\tx \in \U^\l$ and 
$\ty \in \p^c\uu(\tx)$, and define the contact set
$S:=\{x \in \cl \U \mid \uu(x)=\uu(\tx)-c(x,\ty)+c(\tx,\ty)\}$. 
Assume that $S \neq \{\tx \}$, i.e. it is not a singleton. Then
$S$ intersects $\p\U$.
\end{theorem}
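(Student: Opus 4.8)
The plan is to adapt Caffarelli's strict‑convexity technique to the $\tc$‑Monge--Amp\`ere setting: pass to the coordinates of Theorem~\ref{thm:apparently convex} that ``level‑set convexify'' the potential and then, assuming the contact set fails to reach $\p\U$, play the Alexandrov type estimates of Section~\ref{S:Alexandrov} against the collapsing geometry of its small sub‑level sets. First I would fix $\ty\in\p^c\uu(\tx)$, introduce the cost‑exponential coordinates $q$ from $\ty$, and replace $\uu$ by $\tu(q)=\uu(x(q))+c(x(q),\ty)$, which by Theorem~\ref{thm:apparently convex} is $\tc$‑convex with convex level sets; after normalizing $\min_{\cl\U_\ty}\tu=0$, the image $q(S)$ of the contact set equals $\{\tu=0\}=:K$, a compact convex set containing $q(\tx)$. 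By Corollary~\ref{C:Jacobian transform} the hypotheses \eqref{eq:cMA} transform into $\lambda'\le|\p^\tc\tu|\le 1/\lambda'$ on $\U^\lambda_\ty:=q(\U^\lambda)$ and $|\p^\tc\tu|\le\Lambda'$ on $\cl\U_\ty$ (with $\lambda'=\lambda/\gamma^+_c$, $\Lambda'=\Lambda\gamma^-_c$), and since $\V$ is strongly $c^*$‑convex with respect to $\cl\U$, Theorem~\ref{thm:bdry-inter}(a) gives $\ty\in\intr\V$, so the sets constructed below stay inside $\U_\ty$. I would then argue by contradiction: assume $K$ is not a single point while $S\cap\p\U=\emptyset$, i.e.\ $K\subset\intr\U_\ty$; after a routine localization near $q(\tx)$ I may also assume $\diam K$ lies below the threshold $\e_c'$ of Proposition~\ref{P:partial c-cone} and that a fixed ball containing $K$, with its fourfold dilate, lies in $\U_\ty$. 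Finally I would fix a diameter $[e^-,e^+]$ of $K$, set $\nu=(e^+-e^-)/\diam K$, and recall---as in the proof that diameter endpoints of a convex body are exposed---that $e^+$ maximizes $\langle\nu,\cdot\rangle$ over $K$.

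The next step is to track the sections $\Sigma_\e:=\{\tu\le\e\}$, $\e\downarrow 0$, together with $\hat\uu:=\tu-\e$: then $\Sigma_\e=\{\hat\uu\le 0\}$, $\hat\uu=0$ on $\p\Sigma_\e$, $\inf_{\Sigma_\e}\hat\uu=-\e$ is attained on $K\subset\Sigma_\e$, and $\Sigma_\e\supset K$ forces $\diam\Sigma_\e\ge\diam K>0$ while $\Sigma_\e\downarrow K$ in Hausdorff distance (so $\diam\Sigma_\e\le\e_c'$ and $\Sigma_\e\subset\intr\U_\ty$ once $\e$ is small). Letting $\Pi^+$ be the hyperplane orthogonal to $\nu$ supporting $\Sigma_\e$ on the $e^+$‑side, $\Pi^-$ the opposite one, and $\rho_\e:=\dist(e^+,\Pi^+)=\max_{\Sigma_\e}\langle\nu,\cdot-e^+\rangle$, the key elementary facts are: $\langle\nu,q-e^+\rangle\le\dist(q,K)$ for $q\in\Sigma_\e$ (since $e^+$ maximizes $\langle\nu,\cdot\rangle$ over $K$), so $\rho_\e\to 0$; whereas $\dist(e^+,\Pi^-)\ge\diam K$ and the maximal chord of $\Sigma_\e$ orthogonal to $\Pi^+$ has length $\ell_{\Pi^+}\in[\diam K,\diam\Sigma_\e]$, so $\ell_{\Pi^+}\asymp\diam K$.

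The contradiction would come from pitting the two Alexandrov estimates against each other on $\Sigma_\e$. Applying Theorem~\ref{thm:estimate} (equivalently Lemma~\ref{L:prop-infleq}) to $\hat\uu$ with the pair $\Pi^\pm$ at the point $\tilde q=e^+$: since $e^+\in K\subset\intr\Sigma_\e$ lies on $s\rdot\p\Sigma_\e$ with $1-s\asymp\rho_\e/\diam K\to 0$ ($s$ the dilation factor of $e^+$ about the John centre of $\Sigma_\e$), the estimate \eqref{vardist} together with $|\p^\tc\hat\uu|=|\p^\tc\tu|\le\Lambda'$ and $\gamma^+_\tc(\Sigma_\e\times\V)\le\gamma^+_\tc(\U_\ty\times\V)$ would give $\e^n=|\hat\uu(e^+)|^n\le C(n)\,\gamma^+_\tc\Lambda'\,(\rho_\e/\diam K)^{1/2^{n-1}}\,\Leb{n}(\Sigma_\e)^2$, hence $\Leb{n}(\Sigma_\e)^2\gtrsim\e^n\rho_\e^{-1/2^{n-1}}$. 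On the other side, Theorem~\ref{thm:lower Alex} applied to $\hat\uu$ on $\Sigma_\e$, using an ellipsoid placed near $q(\tx)$---at a point of $K$ a fixed fraction of $\diam K$ from both ends---whose dilation factor relative to the John ellipsoid of $\Sigma_\e$ is bounded below uniformly in $\e$, would yield $\Leb{n}(\Sigma_\e)^2\le C(n)\,\frac{\gamma^-_\tc}{\lambda'}\,\e^n$ up to a fixed constant. Combining these forces $\rho_\e^{1/2^{n-1}}\gtrsim 1$, i.e.\ $\rho_\e$ bounded below independently of $\e$---contradicting $\rho_\e\to 0$. Hence $K=\{q(\tx)\}$, i.e.\ $S=\{\tx\}$, unless $S$ meets $\p\U$.

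The hard part will be the last, uniform‑in‑$\e$ step: applying Theorem~\ref{thm:lower Alex} requires fitting, inside the collapsing needle‑like set $\Sigma_\e\cap\U^\lambda_\ty$, an ellipsoid comparable---by a factor independent of $\e$---to the John ellipsoid of $\Sigma_\e$. I would get this from two facts: first, $\U^\lambda$ is open around $\tx$, so a fixed ball about $q(\tx)$ lies in $\U^\lambda_\ty$ and, once $\e$ is small, contains the whole cross‑section of $\Sigma_\e$ there; second, the transverse width of the convex set $\Sigma_\e$ is a concave function of position along $K$, so at any point a fixed fraction of $\diam K$ from the ends it is a fixed fraction of the maximal width, which lets one centre a scaled copy of the John ellipsoid there inside $\Sigma_\e$. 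This is also where the non‑affine character of the cost bites: one cannot simply renormalize $\Sigma_\e$ to a round body, since the affine map would destroy the smallness of the $\Btwos$‑coordinate patch on which the cross‑curvature‑dependent estimates of Section~\ref{S:Alexandrov} are available. Everything else---the coordinate change, the convergence $\rho_\e\to 0$, the localization to $\diam K\le\e_c'$, and keeping $\Sigma_\e$ away from $\p\U_\ty$ via Theorem~\ref{thm:bdry-inter}---is routine.
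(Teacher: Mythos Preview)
Your overall strategy---pass to the $\ty$-coordinates of Theorem~\ref{thm:apparently convex}, exploit the convexity of the sub-level sets $\Sigma_\e=\{\tu\le\e\}$, and play the upper Alexandrov estimate (Lemma~\ref{L:prop-infleq}) at an exposed point of $K=q(S)$ against the lower one (Theorem~\ref{thm:lower Alex})---is close to the paper's in spirit. But the step you call ``routine localization'' is a genuine gap, not a routine one. The contact set $K$ is a fixed compact convex body whose diameter may well exceed the threshold $\e_c'$ required by Proposition~\ref{P:partial c-cone} and Lemma~\ref{L:prop-infleq}, and there is no way to shrink it within your scheme: your sections $\Sigma_\e$ always contain all of $K$, so $\diam\Sigma_\e\ge\diam K$ for every $\e>0$. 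Restricting $U$ to a smaller neighbourhood does not help either, since the conclusion you must reach concerns the original boundary $\p U$. Thus whenever $\diam K>\e_c'$, neither Alexandrov estimate applies to your $\Sigma_\e$, and the contradiction never gets started.

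The paper circumvents this with a \emph{tilting} trick that your outline is missing: instead of raising the level of $\tu$, one perturbs the supporting point $\ty$ to $y_\e=\ty+\e\hat e_1$ (where $\hat e_1$ points from $\tq$ toward an exposed point $q^0$ of $K$) and looks at the section $K_\e=\{x:u(x)\le u(\bar x)-c(x,y_\e)+c(\bar x,y_\e)\}$ for a suitable $\bar x\in S$. As $\e\to 0$ these sections converge not to all of $K$ but to a \emph{small corner} $K_0$ of $K$ cut off by a hyperplane near $q^0$; by choosing that hyperplane close enough to $q^0$ one enforces $\diam K_0\le\e_c'/2$ regardless of how large $K$ is. A price is paid: the tilted sections $K_\e$ are \emph{not} convex in the fixed $\ty$-coordinates, so one must redo the change of variables with $y_\e$ for each $\e$; and the small corner $K_0$ need not meet $U^\lambda$, so a second, larger tilted section $K_\e^1$ (cut so as to pass through $\tq$) is introduced for the lower bound, the two being linked by the asymptotic $\tu_\e(q^0_\e)/\tu_\e^1(q^0_\e)\to\bar s>0$. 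This two-section structure, together with the John-normalization/convex-hull argument at the end of the paper's proof, is also what guarantees the ellipsoid $E_\delta$ for Theorem~\ref{thm:lower Alex} with $\delta$ bounded below uniformly in $\e$---a point your sketch handles loosely (``transverse width is concave'') and which fails at $\tq$ if $\tq$ happens to lie on the relative boundary of $K$.

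Two smaller remarks. First, you invoke Theorem~\ref{thm:bdry-inter}(a), which requires \Btwos, but Theorem~\ref{thm:noexposed} assumes only \Bzero--\Bthree; this is unnecessary anyway, since the hypothesis $S\subset\subset U$ already gives $K\subset\subset U_\ty$, which is all you need to keep the relevant sets inside the domain. Second, the relation $1-s\asymp\rho_\e/\diam K$ for the John-dilation factor of $e^+$ is not obvious (the ray from the John centre to $e^+$ need not be parallel to $\nu$); if you use Lemma~\ref{L:prop-infleq} directly at $\tilde q=e^+$ with the pair $\Pi^\pm$, you get the cleaner factor $\rho_\e/\ell_{\Pi^+}$ without passing through \eqref{vardist}.
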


\begin{proof}
{\blue  To derive a contradiction, we assume that $S \neq \{\tx \}$ and $S \subset \subset U$ (i.e. $S\cap \p U = \emptyset$).}

As in Definition \ref{D:cost exponential}, we transform $(x, \uu)
\longmapsto (q, \tu)$ with respect to $\ty$, i.e. we consider the
transformation $q \in \cl \U_\ty \longmapsto x(q) \in \cl\U$,
defined on $\cl\U_\ty := -D_yc(\cl\U,\ty) \subset T^*_\ty \V$ by
the relation
$$
-D_y c(x(q),\ty)=q,
$$
and the modified cost function
$
\tilde c(q,y):=c(x(q),y)-c(x(q),\ty)
$
on $\cl \U_\ty \times \cl \V$, for which
the $\tc$-convex potential function
$
q \in \cl \U_\ty \longmapsto \tu(q):=\uu(x(q)) + c(x(q),\ty)
$
is level-set convex.
We observe that $\tc(q, \ty) \equiv 0$ for all $q$, and moreover the set $S = \p^\cs u^\cs(\ty)$
appears convex from $\ty$, meaning $S_\ty := -D_y c(S, \ty)$ is convex,
by Corollary \ref{C:local-global}.

Our proof is reminiscent of Caffarelli's for the cost $\tc(q,y) =
-\<q, y\>$ \cite[Lemma 3]{Caffarelli92}. Observe $\tq := - D_y
c(\tx,\ty)$ lies in the interior of the set $\U^\l_\ty := -D_y
c(\U^\l,\ty)$ where $|\p^\tc \tu| \in
[\l/\gamma^+_c,\gamma^-_c/\l]$, according to Corollary
\ref{C:Jacobian transform}.
 Choose the point $\qo \in {\blue S_\ty
\subset \subset  \U_\ty}$ furthest from $\tq$; it is an exposed point of
$S_\ty$. {\blue We will see below that the presence of such exposed point gives a contradiction, proving the theorem.}
%

{\blue  Before we proceed further, note that because of our assumption $S\subset \subset U$,
the sets in the following argument, which will be sufficiently close to $S$, are also contained in $U$;
the {\RJM same} holds for the corresponding sets in different coordinates. This is to make sure that
we can perform the analysis using the assumptions on  $c$.}

For a suitable choice of Cartesian coordinates on $\V$ we may,
without loss of generality,  take $\qo-\tq$ parallel to the positive $y^1$
axis. Denote by $\hat e_i$
the associated orthogonal basis for $T_\ty \V$, and set $\bo :=
\langle \qo, \hat e_1 \rangle$ and $\tb := \langle \tq, \hat e_1
\rangle$, so the halfspace $\{q \in T^*_\ty \V \simeq \R^n \mid q_1 := \langle q,\hat e_1 \rangle \ge
\bo\}$ intersects $S_\ty$ only at $\qo$.
Use the fact that $q^0$ is an exposed point of $S_\ty$ to cut a
corner $K_0$ off the contact set $S$ by choosing $\bs>0$ small
enough that $\bar b = (1-\bs) b^0 + \bs \tb$ satisfies:
\begin{figure}
\begin{center}
\centerline{\epsfysize=2.0truein\epsfbox{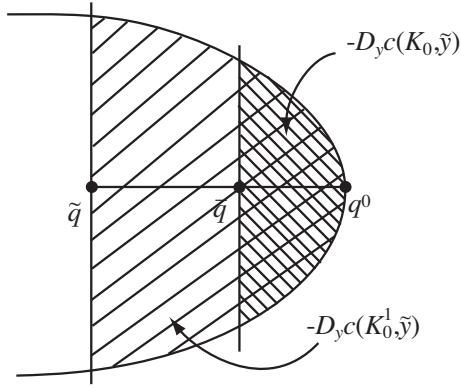}}\caption{{\small If the contact set $S_\ty$ has an exposed point $q_0$,
we can cut two portions of $S_\ty$
with two hyperplanes orthogonal to $\tq - q^0$. The diameter of
$-D_yc(K_0,\ty)$ needs to be sufficiently small to apply the
Alexandrov estimates from Theorem \ref{thm:estimate}, while
$-D_yc(K_0^1,\ty)$ has to intersect $U_\ty^\l$ in some nontrivial set to apply Theorem \ref{thm:lower Alex}
(this is not automatic, as $q^0$ may not be an interior point of $\supp |\p^\tc\uu|$).}}\label{figcontactset}
\end{center}
\end{figure}
\begin{enumerate}
\item[(i)] $-D_y c(K_0,\ty):=S_{\ty}
\cap \{q \in \cl \U_\ty \mid q_1 \geq \bar b\}$ is a compact convex set in the interior of $\U_\ty$;
\item[(ii)] $\diam(-D_y c(K_0, \ty)) \le \e_c'/2$, where $\e_c'$ is from Theorem
\ref{thm:estimate}.
\item[(iii)] {\blue  $-D_y c(K_0, \ty) \subset B \subset 5 \rdot B \subset \U_\ty$ for some ball $B$ as in the assumptions of Theorem \ref{thm:estimate}.}
\end{enumerate}
Defining $q^s := (1-s) q^0 + s \tq$, $x^s := x(q^s)$ the
corresponding $c$-segment with respect to $\ty$, and $\bq =
q^\bs$, note that $S_\ty \cap \{q_1 = \bar b\}$ contains $\bq$,
and $K_0$ contains $\bx:= x^\bs$ and $x^0$. Since the corner $K_0$
may not intersect the support of $|\p^c u|$ (especially,
when $q^0$ is not an interior point of $\supp |\p^c u|$),  we
shall need to cut a larger corner $K_0^1$ as well, defined by
$-D_y c(K^1_0,\ty):=S_{\ty} \cap \{q \in \cl \U_\ty \mid q_1 \geq
\tb\}$, which intersects $\U^\l$ at $\tx$.  By tilting the
supporting function slightly, we shall now define sections $K_\e
\subset K_\e^1$ of $u$ whose interiors include the extreme point
$x^0$ and whose boundaries pass through $\bx$ and $\tx$
respectively, but which converge to $K_0$ and $K_0^1$ respectively
as $\e \to 0$.

Indeed, set $y_\e := \ty + \e \hat e_1$ and observe
\begin{align}
m^s_\e (x) & := -c(x, y_\e) + c(x, \ty) + c(x^s, y_\e) - c(x^s, \ty)
\cr&= \e \langle -D_y c(x,\ty)+D_y c(x^s,\ty),\hat e_1 \rangle+o(\e)
\cr&= \e (\langle-D_y c(x,\ty), \hat e_1\rangle -(1-s)\bo - s\tb)+o(\e).
\label{e asymptotic}
\end{align}
Taking $s \in \{\bar s,1\}$ in this formula and $\e>0$ shows the sections defined by
\begin{align*}
K_\e &:=\{x \mid \uu(x) \leq \uu(\bx)-c(x,y_\e) +c(\bx,y_\e)\},\\
K^1_\e &:= \{x \mid \uu(x) \leq \uu(\tx)-c(x,y_\e) +c(\tx,y_\e)\},
\end{align*}
both include a neighbourhood of $\xo$ but
converge to $K_0$ and $K_0^1$ respectively as $\e \to 0$.
\begin{figure*}
\begin{center}
\centerline{\epsfysize=1.5truein\epsfbox{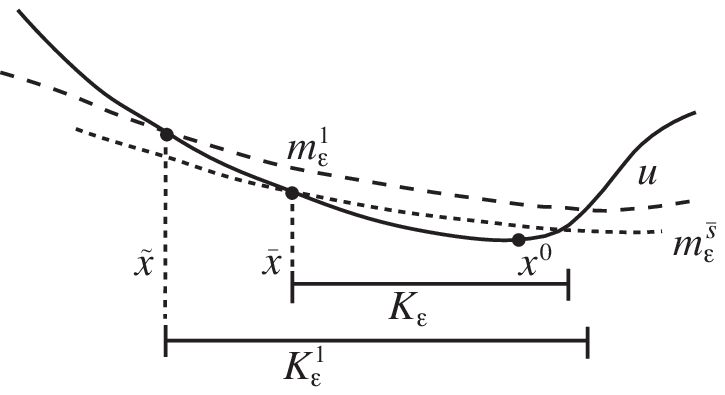}}\caption{{\small  We cut the graph of $u$ with the two functions $m_\e^{\bar s}$ and $m_\e^1$
to obtain two sets $K_\e \approx K_0$ and $K_\e^1 \approx K_0^1$
inside which we can apply our Alexandrov estimates to get a
contradiction (Theorem \ref{thm:estimate} to $K_\e$, and Theorem \ref{thm:lower Alex} to $K_\e^1$). The idea is that the value of $u - m_\e^{\bar s}$
at $\xo$ is comparable to its minimum inside $K_\e$, but this is
forbidden by our Alexandrov estimates since $x_0$ is too close to
the boundary of $K_0^\e$. However, to make the argument work we
need also to take advantage of the section $K_\e^1$, in order to
``capture'' some positive mass of the $c$-Monge-Amp\`ere
measure.}}\label{figcutsection}
\end{center}
\end{figure*}

We remark that there exist a
priori no coordinates in which {\blue all sets $K_\e$ are simultaneously} convex. However
for each fixed $\e>0$,
we can change coordinates so that both $K_\e$ and $K^1_\e$ become convex:
use $y_\e$ to make the transformations
\begin{align*}
q &:=-D_y c(x_\e(q),y_\e),
\\\tc_\e(q,y) &:=c(x_\e(q),y)-c(x_\e(q),y_\e),
\end{align*}
so that the functions
\begin{align*}
\tu_\e(q)&:=\uu(x_\e(q))+ c(x_\e(q),y_\e)-\uu(\bx)-c(\bx,y_\e), \\
\tu^1_\e(q)&:=\uu(x_\e(q))+ c(x_\e(q),y_\e)-\uu(\tx)-c(\tx,y_\e).
\end{align*}
are level-set convex on $\U_{y_\e} := D_y c(\U,y_\e)$.
Observe that, in these coordinates, $K_\e$ and $K^1_\e$ become convex:
\begin{align*}
\tilde K_\e &:=-D_y c(K_\e,y_\e)=\{q \in \cl\U_{y_\e} \mid \tu_\e(q) \leq 0\}, \\
\tilde K^1_\e &:=-D_y c(K^1_\e,y_\e)=\{q \in \cl\U_{y_\e} \mid \tu^1_\e(q) \leq 0\},
\end{align*}
and either $\tilde K_\e \subset \tilde K^1_\e$ or $\tilde K^1_\e \subset \tilde K_\e$
since $\tu_\e(q) - \tu^1_\e(q) = const$.
For $\e>0$ small, the inclusion must be the first of the two since the limits satisfy
 $\tilde K_0 \subset \tilde K^1_0$ and $\tq \in \tilde K^1_0 \setminus \tilde K_0$.

In the new coordinates,  our original point $\tx \in \U^\l$, the
exposed point $x^0$, and the $c$-convex combination $\bx$ with
respect to $\ty$, correspond to
\begin{align*}
\tq_\e :=-D_y c(\tx,y_\e), \ \ 
q_\e^0 :=-D_y c(x^0,y_\e), \ \ 
\bq_\e :=-D_y c(\bx,y_\e). 
\end{align*}
Thanks to {\blue (ii) and (iii), for $\e$ sufficiently small we have $\diam(\tilde K_\e) \leq \e_c'$ and $\tilde K_\e \subset B \subset 4 \rdot B \subset U_{y_\e}$ for some ball $B$
(note that this ball
can be different from the one in (iii)),}
so that all the estimates of Theorem \ref{thm:estimate} apply.

Let us observe that, since
$\lim_{\e \to 0} \qo_\e - \bq_\e= \qo -\bq$ and $q^0 \in \partial \Q$, \eqref{vardist} combines with $K_\e \subset K^1_\e$ and
$|\p^{\tc_\e} \tu_\e|(K_\e) \le  \Lambda\gamma^-_c \Leb{n}(K_\e)$
from \eqref{eq:cMA} and Corollary \ref{C:Jacobian transform}, to
yield
\begin{equation}\label{contradictory upper bound}
\frac{|\tu_\e(\qo_\e)|^n} { \Lambda\gamma^-_c \Leb{n}(\tilde K^1_\e)^2} \to 0 \qquad \text{as }\e \to 0.
\end{equation}

On the other hand,  $\bx \in S$ implies  $\tu_\e(\qo_\e) = {\blue m^\bs_\e(x^0)}$,
and $\tx \in S$ implies $\tu^1_\e(\qo_\e) = {\blue m^1_\e(x^0)}$ similarly.
Thus \eqref{e asymptotic} yields
\begin{align}\label{comparison at q0}
\frac{\tu_\e (\qo_\e)}{\tu^1_\e (\qo_\e)}
= \frac{\e(b^0-\bar b) + o(\e)  }{\e (b^0 - \tb) + o(\e)}
\to \bs \quad {\rm as} \quad \e \to 0.
\end{align}
Our contradiction with \eqref{contradictory upper bound}--\eqref{comparison at q0}
will be established by bounding the
ratio $|\tu^1(\qo_\e)|^n/\Leb{n}(K^1_\e)^2$
away from zero.

Recall that
$$
b^0=\langle -D_y c(x^0,\ty), \hat e_1 \rangle \ =\ \max \{q_1 \mid q \in -D_y c(K_0,\ty) \}) \ >\  \tb
$$
and $\uu(x)-\uu(\tx)\geq -c(x,\ty)+c(\tx,\ty)$ with equality at $x^0$.
From the convergence of $K^1_\e$ to $K^1_0$ and the asymptotic behaviour \eqref{e asymptotic}
of $m^1_\e(x)$ we get
\begin{equation}\label{ratio var and inf var}
\begin{split}
\frac{\tu^1_\e(\qo_\e)}{\inf_{\tilde K^1_\e} \tu^1_\e}
&=\frac{-\uu(x^0) - c(x^0, y_\e)+\uu(\tx)+c(\tx,y_\e)}
{\sup_{q \in \tilde K^1_\e} [-\uu(x(q))- c(x(q),y_\e)+\uu(\tx)+c(\tx,y_\e)]}\\
& \ge \frac{- c(x^0, y_\e) + c(\tx, y_\e) + c(x^0, \ty) - c(\tx, \ty)}
{\sup_{x \in K^1_\e}[-c(x, y_\e) + c(\tx, y_\e) + c(x, \ty)- c(\tx, \ty)]} \\
&\geq \frac{\e (\langle-D_y c(x^0,\ty), e_1\rangle - \tb)+o(\e)}
 {\e (\max \{q_1 \mid q \in -D_y c(K^1_\e,\ty)\} - \tb)+o(\e) }\\
&\geq \frac{1}{2}
\end{split}
\end{equation}
for $\e$ sufficiently small ({\blue because, by our construction, $\max \{q_1 \mid q \in -D_y c(K^1_\e,\ty)\}$ is
exactly $ \langle-D_y c(x^0,\ty), e_1\rangle$}).
This shows $\tu^1(q^0_\e)$ is comparable to the minimum value of $\tu^1_\e$. 
To conclude the proof we would like to apply Theorem \ref{thm:lower Alex}, but we need to show that
$|\p^c u| \geq \l$ on a stable fraction of $K^1_\e$ as
$\e \to 0$. We shall prove this as in \cite[Lemma 3]{Caffarelli92}.

Since $K^1_\e$ converges to $K^1_0$ for sufficiently small $\e$,
observe that $K^1_\e$ {\blue  (thus, $\tilde K^1_\e$)} is {\blue bounded  uniformly in $\e$}.  Therefore the affine
transformation $(L^1_\e)^{-1}$ that sends $\tilde K^1_\e$ to $B_1
\subset \tilde K^{1, *}_\e \subset \cl B_n$ as in Lemma
\ref{L:John} is an expansion, i.e. $|(L^1_\e)^{-1} q -
(L^1_\e)^{-1}q'| \geq C_0|q-q'|$, with a constant $C_0>0$
independent of $\e$. Since $\tx$ is an interior point of $\U^\l$,
$B_{2\beta^-_c\d/C_0}(\tx) \subset \U^\l$ for sufficiently small
$\delta >0$ (here $\beta^-_c$ is from \eqref{bi-Lipschitzbound}), hence $B_{2\d/C_0}(\tq_\e) \subset \U^\l_{y_\e}$,
{\blue where
$\U^\l_{y_\e}:=-D_yc(\U^\l,y_\e)$.  Let $\tq_\e^*:= (L^1_\e)^{-1} (\tq_\e)$. Then, by the expansion property of $(L^1_\e)^{-1}$, we have
$$
\U^{\l,*}_{y_\e}:=(L_\e^1)^{-1}(\U^\l_{y_\e}) \supset B_{2\d}(\tq_\e^*).
$$
Since $\tilde K^{1,*}_{\e}$ is convex, it contains
the convex hull $\mathcal{C}$ of $B_{1} \cup \{\tq^*_\e \}$. Consider $\mathcal{C} \cap B_{2\d} (\tq_\e^*)$. Since $\dist (\zero, \tq^*_\e) \le n$, there exists  a ball $B^*$ of radius $\d/n$ (centered somewhere in $\mathcal{C} \cap B_{2\d}(\tq_\e^*) $) such that
\begin{align*}
B^* \subset \mathcal{C} \cap B_{2\d}(\tq_\e^*) \subset B_{2\d}(\tq_\e^*) \cap \tilde K^{1,*}_{\e}.
\end{align*}
Therefore, the ellipsoid
\begin{align*}
 E_\delta:=L_\e^1\bigl(B^* \bigr)
\end{align*}
is contained in $\U^\l_{y_\e}$.
Notice that $E_\d$ is nothing but a dilation and translation of the ellipsoid $E= L_\e^1 (B_1)$ associated to $\tilde K^1_\e$ by John's Lemma in \eqref{E:well-centered}.}
{\green   By dilating further if necessary (but, with a factor independent of $\e$), one may assume that
$E_\d  \subset B \subset 4 \rdot B \subset U_{y_\e}$ for some ball $B$ (as before,
this ball can be different from that for $K_0$ or $\tilde K_\e$). Thus,}
we can apply {\blue Theorem \ref{thm:lower Alex} (with $\Q= \tilde K^1_\e$)} and obtain
\begin{align*}
\frac{|\inf_{\tilde K^1_\e} \tu^1_\e|^n}{\Leb{n}(\tilde K^1_\e)^2} \gtrsim \delta^{2n}
\end{align*}
{\blue where  the inequality $\gtrsim$ is independent of $\e$. As $ \e\to 0$ }
this contradicts
\eqref{contradictory upper bound}--\eqref{ratio var and inf var} to complete the proof.
\end{proof}

\begin{remark}
{\rm
As can be easily seen from the proof, {\RJM when $U^\l=U$} one can actually show that if
$S$ is not a singleton, then $S_\ty$ has no exposed points in the interior of $U_\ty$.
Indeed, if by contradiction there exists $\qo$ an exposed point of $S_\ty$
belonging to the interior of $U_\ty$, we can choose a point $\tq\in S_\ty$ {\blue sufficiently close to $q^0$} in the interior of $U_\ty=U_\ty^\l$
such that the segment $\qo-\tq$ is orthogonal to a hyperplane supporting $S_\ty$ at $\qo$.
Then it can be  immediately checked that the above proof
(which could even be simplified in this particular case, since one may choose $K_0^1=K_0$
and avoid the last part of the proof) shows that such a point $\qo$ cannot exist.
}
\end{remark}

\section{Continuity and injectivity of optimal maps}
\label{S:ContinuousInjection}

The first theorem below combines results of Sections
\ref{S:mapping} and \ref{S:contact} to deduce strict $c$-convexity
of the $c$-potential for an optimal map, if its target is strongly
$c$-convex.  This strict $c$-convexity --- which is equivalent to
injectivity of the map
--- will then be combined with an adaptation of Caffarelli's argument \cite[Corollary 1]{Caffarelli90}
to obtain interior continuity of the map --- or equivalently
$C^{1}$-regularity of its $c$-potential function --- for
\Bthree\ costs.

\begin{theorem}[Injectivity of optimal maps to a strongly $c$-convex target]
\label{T:strict convex}
Let $c$ satisfy \Bzero--\Bthree\ and \Btwos.
If $\uu$ is a $c$-convex solution of \eqref{eq:cMA} on $\U^\l \subset \U$ open,
then $\uu$ is strictly $c$-convex on $\U^\l$, meaning
$\p^c \uu (x)$ and $\p^c \uu(\tx)$ are disjoint whenever $x,\tx \in \U^\l$ are distinct.
\end{theorem}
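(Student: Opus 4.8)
The plan is to argue by contradiction, combining the topological result of Theorem~\ref{thm:bdry-inter} (which forbids interior-to-boundary mixing) with the structural dichotomy of Theorem~\ref{thm:noexposed} (which says a non-trivial contact set must reach $\partial\U$). Suppose $\uu$ is not strictly $c$-convex on $\U^\l$: then there are distinct $x, \tx \in \U^\l$ and a common point $\ty \in \p^c\uu(x) \cap \p^c\uu(\tx)$. Passing to the cost-exponential coordinates $q = -D_y c(\cdot,\ty)$ from Definition~\ref{D:cost exponential} and working with the level-set convex modified potential $\tu$ of Theorem~\ref{thm:apparently convex}, both $x$ and $\tx$ lie in the contact set $S := \p^{c^*}\uu^{c^*}(\ty)$, whose image $S_\ty = -D_yc(S,\ty)$ is convex by Corollary~\ref{C:local-global}. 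Since $S$ contains the two distinct points $x,\tx$, it is not a singleton, so Theorem~\ref{thm:noexposed} applies and $S$ must intersect $\p\U$; pick $x_b \in S \cap \p\U$.

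Next I would extract a contradiction from the boundary behaviour. The point $x_b \in S$ satisfies $\ty \in \p^c\uu(x_b)$ by definition of the contact set (every point of $S$ has $\ty$ in its $c$-subdifferential, since $\uu(x) + \uu^{c^*}(\ty) + c(x,\ty) = 0$ there). Now invoke Theorem~\ref{thm:bdry-inter}(b): under the strong $c$-convexity hypothesis \Btwos\ and the bound $|\p^c\uu| \le \Lambda$ on $\cl\U$ supplied by \eqref{eq:cMA}, boundary points of $\U$ are mapped by $\p^c\uu$ only to boundary points of $\V$. Hence $\ty \in \p\V$. On the other hand, $\ty \in \p^c\uu(\tx)$ with $\tx \in \U^\l$ an interior point of $\U$ on which $|\p^c\uu| \ge \l > 0$, and $\V$ is strongly $c^*$-convex with respect to $\U^\l$ (again by \Btwos); so Theorem~\ref{thm:bdry-inter}(a), applied with $X = \U^\l$, forces $\ty$ to be an interior point of $\V$ — a direct contradiction with $\ty \in \p\V$. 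This contradiction shows no such common $\ty$ can exist, and therefore $\p^c\uu(x) \cap \p^c\uu(\tx) = \emptyset$ for distinct $x, \tx \in \U^\l$, i.e. $\uu$ is strictly $c$-convex on $\U^\l$.

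One detail deserves care: to apply Theorem~\ref{thm:bdry-inter}(a) I need $|\p^c\uu| \ge \l$ on all of $X = \U^\l$, which is exactly the first line of \eqref{eq:cMA}, and I need $\tx$ to be an \emph{interior} point of $X$, which holds since $\U^\l$ is open. Likewise Theorem~\ref{thm:bdry-inter}(b) needs only the global upper bound on $\cl\U$, also part of \eqref{eq:cMA}, and the strong $c$-convexity of $\U$ relative to $\V$, which is \Btwos. Thus both halves of the topological theorem are available. I expect the main (and essentially only) obstacle to be bookkeeping: making sure the case $x_b \in \p\U$ with $\ty \in \p^c\uu(x_b)$ genuinely triggers part (b) rather than some degenerate situation — but since the contact-set relation $\uu(x_b) = \uu(\tx) - c(x_b,\ty) + c(\tx,\ty)$ is precisely the equality case in the definition of $\p^c$, membership $\ty \in \p^c\uu(x_b)$ is immediate, and no further regularity of $\uu$ at $x_b$ is needed. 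Hence the argument is a clean combination of the two previously established theorems.
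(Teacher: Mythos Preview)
Your argument is correct and follows essentially the same approach as the paper: a contradiction combining Theorem~\ref{thm:noexposed} with both parts of Theorem~\ref{thm:bdry-inter}. The only cosmetic difference is that the paper applies part (a) first to conclude $\ty \in \V$ and then part (b) to force $x_b \in \U$ (contradicting $x_b \in \p\U$), whereas you reverse the order, applying (b) first to get $\ty \in \p\V$ and then (a) to get $\ty \in \V$; either ordering yields the same contradiction.
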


\begin{proof}
Suppose by contradiction that $\ty \in \p^c \uu(x) \cap \p^c
\uu(\tx)$ for two distinct points $x,\tx \in \U^\l$, and set
$S=\p^\cs \uu^\cs(\ty)$. According to Theorem \ref{thm:noexposed},
the set $S$ intersects the boundary of $\U$ at a point $\bx \in \p
\U \cap \p^\cs u^\cs(\ty)$. Since \eqref{eq:cMA} asserts $\l \le
|\p^c u|$ on $\U^\l$ and $|\p^c u| \le \Lambda$ on $\cl \U$,
Theorem \ref{thm:bdry-inter}(a) yields $\ty \in \V$ (since $x, \tx
\in U^\l$), and hence $\bar x \in \U$ by Theorem
\ref{thm:bdry-inter}(b). This contradicts $\bar x \in \p \U$ and
proves the theorem.
\end{proof}

By adapting Caffarelli's argument \cite[Corollary 1]{Caffarelli90}, we now show
continuity of the optimal map.
Although in the next section we will actually prove a stronger result (i.e., optimal maps to strongly $c$-convex targets are locally H\"older continuous),
we prefer to prove this result for two reasons: first, the proof is much simpler than the one of H\"older continuity.
Second, although not strictly {\RJM necessary},
knowing in advance that solutions of \eqref{eq:cMA} are $C^1$ will avoid some technical issues in the proof of the $C^{1,\a}$
regularity.

\begin{theorem}[Continuity of optimal maps to strongly $c$-convex targets]
\label{T:continuity} Let $c$ satisfy \Bzero--\Bthree\ and \Btwos.
If $\uu$ is a $c$-convex solution of \eqref{eq:cMA} on $\U^\l
\subset \U$ open, then $\uu$ is continuously differentiable inside
$\U^\l$.
\end{theorem}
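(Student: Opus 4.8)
The plan is to establish $C^1$ regularity of $\uu$ on $\U^\l$ --- equivalently, that $\p^c\uu(\tx)$ is a singleton for every $\tx\in\U^\l$ --- by transporting the injectivity of Theorem~\ref{T:strict convex} to the dual side, in the spirit of Caffarelli's \cite[Corollary 1]{Caffarelli90}. First I would record that the contact sets of $\uu$ through interior points are singletons: for every $\tx\in\U^\l$ and every $\ty\in\p^c\uu(\tx)$, the set $S_\ty:=\{x\in\cl\U\mid(x,\ty)\in\p^c\uu\}$ equals $\{\tx\}$. This is exactly the argument already used to prove Theorem~\ref{T:strict convex}: $\tx\in S_\ty$ always, and if $S_\ty\ne\{\tx\}$ then Theorem~\ref{thm:noexposed} forces $S_\ty$ to meet $\p\U$ at some $\bx$, while Theorem~\ref{thm:bdry-inter}(a) (using $|\p^c\uu|\ge\l$ on $\U^\l$ and the strong $\cs$-convexity of $\V$ with respect to $\U^\l$ contained in \Btwos) puts $\ty$ in $\intr\V$; then $(\bx,\ty)\in(\p\U\x\V)\cap\p^c\uu$ contradicts Theorem~\ref{thm:bdry-inter}(b) (using $|\p^c\uu|\le\Lambda$ on $\cl\U$ and strong $c$-convexity of $\U$ with respect to $\V$).

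Next I would pass to $v:=\uu^\cs$ on the image set $\V^\l:=\p^c\uu(\U^\l)$ and check that it solves there the $\cs$-Monge-Amp\`ere problem treated in Sections~\ref{S:Alexandrov}--\ref{S:contact} (for the cost $\cs$, which again satisfies \Bzero--\Bthree\ and \Btwos\ with $\U$ and $\V$ interchanged, by Remark~\ref{rmk:symmetry}). By the previous step $\p^\cs v(\ty)=\{\tx\}\subset\U^\l$ for every $\ty\in\V^\l$, with $\tx$ the unique (by Theorem~\ref{T:strict convex}) preimage in $\U^\l$; since $\p^\cs v$ is an upper semicontinuous closed relation and $\U^\l$ is open, $y$ near $\ty$ gives $\emptyset\ne\p^\cs v(y)\subset\U^\l$, hence $y\in\p^c\uu(\U^\l)=\V^\l$, so $\V^\l$ is open. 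The bound $|\p^c\uu|\le\Lambda$ on $\cl\U$ yields $|\p^\cs v|\ge1/\Lambda$ on $\cl\V$ by Lemma~\ref{L:cMA properties}(d); and since $|\p^\cs v|=G_{\#}(\Leb n \lfloor_{\cl\U})$ (the $x\leftrightarrow y$ form of the proof of Lemma~\ref{L:cMA properties}(a), with $G$ as in \eqref{implicit G}), for a Borel $Y\subset\V^\l$ every $x\in\dom D\uu$ with $G(x)\in Y$ lies in $\p^\cs v(G(x))$, a singleton inside $\U^\l$ by the previous step, so $x\in\U^\l$; then $|\p^c\uu|\ge\l$ on $\U^\l$ forces $|\p^\cs v|(Y)=\Leb n(G^{-1}(Y))\le\l^{-1}\Leb n(Y)$, i.e.\ $|\p^\cs v|\le1/\l$ on $\V^\l$.

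Then I would argue, for each $\tx\in\U^\l$, as follows. Pick $\hat y\in\p^c\uu(\tx)\subset\V^\l$; in the sense of Theorem~\ref{thm:noexposed} applied to $v$ and $\cs$, the set $T:=\p^c\uu(\tx)$ is the contact set through the interior point $\hat y\in\V^\l$ in the direction $\tx\in\p^\cs v(\hat y)$. It is compact, hence compactly contained in the open set $\V^\l$, and by Theorem~\ref{thm:bdry-inter}(a) it lies in $\intr\V$. The proof of Theorem~\ref{thm:noexposed} invokes the two-sided Monge-Amp\`ere bound only on sub-level sets contained in a fixed neighborhood of its contact set, and here that neighborhood may be taken inside $\V^\l$, where $1/\Lambda\le|\p^\cs v|\le1/\l$; so the proof goes through for $v$ and shows $T$ is either a singleton or meets $\p\V$, and the latter is excluded. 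Hence $\p^c\uu(\tx)=\{G(\tx)\}$ is a singleton for every $\tx\in\U^\l$, and so $\uu$ is differentiable on $\U^\l$: every extreme point of the compact convex set $\p\uu(\tx)$ is a limit of $D\uu(x_k)=-D_x c(x_k,G(x_k))$ along $x_k\to\tx$ in $\dom D\uu$, and closedness of $\p^c\uu$ forces $G(x_k)\to G(\tx)$, so $\p\uu(\tx)=\{-D_x c(\tx,G(\tx))\}$. Thus $D\uu=-D_x c(\cdot,G)$ on $\U^\l$ with $G=\p^c\uu$ single-valued and, being a single-valued closed relation, continuous; therefore $\uu\in C^1_{loc}(\U^\l)$, which is exactly continuity of the optimal map.

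The step I expect to be delicate is the middle one: the dual density $|\p^\cs v|$ is in general \emph{not} bounded above on all of $\cl\V$, but only on the image $\V^\l$ of $\U^\l$, so Theorem~\ref{thm:noexposed} cannot be invoked for $v$ as a black box. What makes the argument go through is that the only contact sets of $v$ one needs are interior to $\V$ (by Theorem~\ref{thm:bdry-inter}(a)) and compactly contained in $\V^\l$, where the required upper bound does hold, together with the local nature near the contact set of the Alexandrov-type estimates of Section~\ref{S:Alexandrov}. The one genuinely new computation is the inclusion $G^{-1}(Y)\subset\U^\l$ for $Y\subset\V^\l$, which converts injectivity of the optimal map on $\U^\l$ (Theorem~\ref{T:strict convex}) into the upper bound $|\p^\cs v|\le1/\l$ on $\V^\l$.
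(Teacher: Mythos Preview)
Your approach is correct and genuinely different from the paper's. The paper argues directly on $u$: assuming $\partial^c u(\tx)$ is not a singleton, it picks an exposed point $y_0$ of $\partial u(\tx)$, passes to cost-exponential coordinates based at $y_0$, and observes that in those coordinates $\tu$ has its unique minimum at the origin (by Theorem~\ref{T:strict convex}) while $\partial\tu(0)$ is nontrivial; the exposed-point geometry then forces $\tu(-tv)=o(t)$ in one direction and linear growth in the opposite direction, so for $\e$ small the section $\{\tu\le\e\}$ is long and thin with the minimum near one end, which directly violates the Alexandrov bounds \eqref{eq:Alex lower}--\eqref{vardist}. This is short and avoids any dualization.

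Your route instead transports the problem to $v=u^{c^*}$: strict $c$-convexity of $u$ on $U^\lambda$ makes $\partial^{c^*}v$ single-valued on $V^\lambda:=\partial^c u(U^\lambda)$, yielding openness of $V^\lambda$ and a two-sided $c^*$-Monge--Amp\`ere bound $|\partial^{c^*}v|\in[1/\Lambda,1/\lambda]$ there; then you re-run Theorem~\ref{thm:noexposed} for $(v,c^*)$ to force $\partial^c u(\tx)$ to be a singleton. Two points are worth making explicit. First, in your upper-bound step the missing line is that $G^{-1}(Y)\subset\dom Du$, hence $\partial^c u(G^{-1}(Y))=G(G^{-1}(Y))\subset Y$, whence $\lambda\,\Leb n(G^{-1}(Y))\le|\partial^c u|(G^{-1}(Y))=\Leb n\big(\partial^c u(G^{-1}(Y))\big)\le\Leb n(Y)$. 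Second, Theorem~\ref{thm:noexposed} cannot be cited verbatim because \eqref{eq:cMA} demands a global upper bound on the $c$-Monge--Amp\`ere measure, which you do not have for $v$; you correctly note that its proof uses the upper bound only on the sections $K_\e$, which for small $\e$ lie in any prescribed neighbourhood of the contact set $T=\partial^c u(\tx)\subset\subset V^\lambda$, so the local bound on $V^\lambda$ suffices. The paper's argument is more economical; yours highlights an attractive duality --- Theorem~\ref{thm:noexposed} applied to $u$ yields injectivity, and applied to $v$ yields $C^1$ --- at the cost of re-entering the proof of Theorem~\ref{thm:noexposed}.
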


\begin{proof}
Recalling that $c$-convexity implies semiconvexity (see Section \ref{S:background} and also \eqref{eq:semiconvexity}), all we need to
show is that the $c$-subdifferential $\p^{c} \uu(\tx)$ of $\uu$ at
every point $\tx \in \U_\l$ is a singleton.
{\blue Notice that $\p^{c}\uu (\tx) \subset \subset V$ by Theorem~\ref{thm:bdry-inter}(a).}

Assume by contradiction that is not. As $\p^{c} \uu(\tx)$ is
compact, one can find a point $y_0$ in the set $\p^c\uu(\tx)$ such
that $-D_xc(\tx,y_0) \in \p\uu(\tx)$  is an exposed point of the
compact convex set $\p\uu(\tx)$. Similarly to Definition
\ref{D:cost exponential}, we transform $(x, \uu) \longmapsto (q,
\tu)$ with respect to $y_0$, i.e. we consider the transformation
$q \in \cl \U_{y_0} \longmapsto x(q) \in \cl\U$, defined on
$\cl\U_{y_0} = -D_yc(\cl\U,y_0) + D_yc(\tx,y_0)\subset T^*_{y_0}
\V$ by the relation
$$
-D_y c(x(q),y_0)+D_yc(\tx,y_0)=q,
$$
and the modified cost function $ \tilde
c(q,y):=c(x(q),y)-c(x(q),y_0) $ on $\cl \U_{y_0} \times \cl \V$,
for which the $\tc$-convex potential function $q \in \cl \U_{y_0}
\longmapsto \tu(q):=\uu(x(q)) -\uu(\tx)+ c(x(q),y_0) -c(\tx,y_0)$
is level-set convex. We observe that $\tc(q, y_0) \equiv 0$ for all $q$, the
point $\tx$ is sent to $\zero$, $\tu\geq \tu(\zero)=0$, and $\tu$
is strictly $\tc$-convex thanks to Theorem \ref{T:strict convex}.
\begin{figure}
\begin{center}
\centerline{\epsfysize=1.2truein\epsfbox{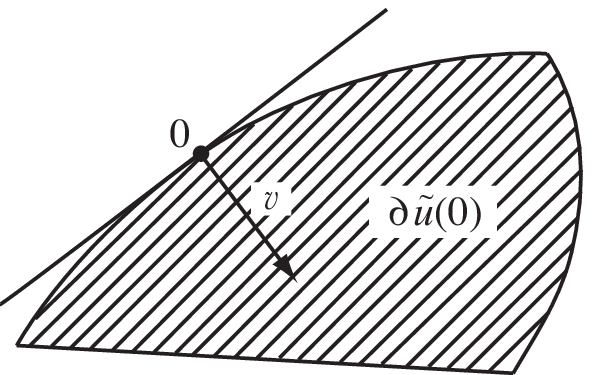}}\caption{{\small $v \in \p\tu(\zero)$ and the hyperplane orthogonal to $v$ is supporting $\p\tu(\zero)$ at
$0$.}}\label{figC1subdiff}
\end{center}
\end{figure}
Moreover, since  $-D_xc(\tx,y_0) \in \p\uu(\tx)$ was an exposed
point of $\p\uu(\tx)$, $\zero=-D_q\tc(\zero,y_0)$ is an exposed point
of $\p\tu(\zero)$. Hence, we can find a vector $v \in \p\tu(\zero)
\setminus\{\zero\}$ such that the hyperplane orthogonal to $v$ is a
supporting hyperplane for $\p\tu(\zero)$ at {\blue $\zero$, see Figure~\ref{figC1subdiff}}.
\begin{figure}
\begin{center}
\centerline{\epsfysize=1.2truein\epsfbox{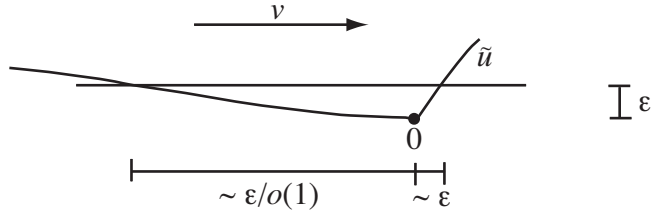}}\caption{{\small Since the hyperplane orthogonal to $v$ is supporting $\p\tu(\zero)$ at $0$, we have
$\tu(-tv)=o(t)$ for $t \geq 0$. Moreover, by the semiconvexity, $\tu$ grows at least
linearly in the direction of $v$.}}\label{figC1sect}
\end{center}
\end{figure}
By the semiconvexity of $\tu$ (see \eqref{eq:semiconvexity}), this implies that
\begin{equation}
\label{eq:good v} \tu( -t v)=o(t) \quad \text{for }t \geq 0,\qquad
\tu(q) \geq \<v, q\>  -M_c|q|^2\quad \text{for all }q \in
\U_{y_0}.
\end{equation}
Let us now consider the (convex) section $K_\e:=\{\tu \leq \e\}$.
Recalling that $\zero$ the unique minimum point with $\tu(\zero)=0$, we have that $K_\e$ shrinks to $\zero$ as $\e\to 0$, and $\tu>0$ on $U_{y_0}\setminus \{\zero\}$.
Thus by \eqref{eq:good v} it is
easily seen that for $\e$ sufficiently small the following hold:
$$
K_\e \subset \{q \mid \<q, v\> \leq 2\e\},\qquad -\a(\e)v \in K_\e,
$$
where $\a(\e)>0$ is a positive constant depending on $\e$ and such
that $\a(\e)/\e \to +\infty$ as $\e \to 0$. Since $\zero$ is the
minimum point of $\tu$, this immediately implies that one between
our Alexandrov estimates \eqref{eq:Alex lower} or \eqref{vardist} must be
violated by $\tu$ inside $K_\e$ for $\e$ sufficiently small, which
is the desired contradiction, {\blue see Figure~\ref{figC1sect}}.
\end{proof}

{\blue \section{Engulfing property and H\"older continuity of
optimal maps}\label{S:H\"older} }

Our goal is to prove the $C^{1,\alpha}_{loc}$ regularity of $\uu$ inside $U^\l$.
The proof will rely on the strict  $c$-convexity of $\uu$ inside $U^\l$
(Theorem \ref{T:strict convex}) and the Alexandrov {\blue type } estimates from Theorems \ref{thm:lower Alex} and \ref{thm:estimate}.
Indeed, these results will enable us to show the engulfing property under the \Bthree\ condition (Theorem~\ref{T:engulfing}),
thus extending the result of Gutierrez and Huang \cite[Theorem 2.2]{guthua} given for the classical Monge-Amp\`ere equation.
Then, the engulfing property allows us to apply the method of Forzani and Maldonado \cite{ForzaniMaldonado04} to obtain local $C^{1, \alpha}$ estimates (Theorem~\ref{T:C1alpha}).
{\red Finally, by a covering argument, we show that the H\"older exponent $\alpha$ depends only on $\lambda$ and $n$, and not on particular cost function (Corollary~\ref{C:universal Holder exponent}).}\\

\begin{remark}\label{R:no-need-B2-B2u}
{\rm We point out that in this section the assumptions \Btwo \ and \Btwos \  are used only to ensure
the strict $c$-convexity (Theorems \ref{T:strict convex}).
 Since the following arguments are performed {\blue locally, i.e. after} restricting to small
 neighborhoods, having already obtained the strict $c$-convexity of $\uu$, the $c$-convexity
 {\blue \Btwo \  and \Btwos\ of the ambient domains are not further required.}
 This is a useful remark for the covering argument in Corollary~\ref{C:universal Holder exponent}.}
\end{remark}

Given points ${\RM \tx} \in U^\l$ and ${\RM \ty} \in \partial^c \uu({\RM \tx})$, and $\tau>0$,
we denote by $S({\RM \tx,\ty},\tau)$ the section
\begin{equation}
\label{eq:S} S({\RM \tx,\ty},\tau):=\{{\RM x} \in U \mid \uu({\RM x}) \leq
\uu({\RM \tx})-c({\RM x,\ty})+c({\RM \tx,\ty})+\tau\}.
\end{equation}
{\green Notice that by the  strict $c$-convexity of $\uu$ (Theorem~\ref{T:strict convex}),
$S({\RM \tx,\ty},\tau) \to \{ {\RM \tx}\}$ as $\tau \to 0$.

In the following, we} assume that all points $x, \tx$ that we choose
inside $U^\l$ are close to each other, and
``relatively far'' from the boundary of $U^\l$,
{\blue i.e.
\begin{align*}
 \dist(x, \tx) \ll \min \big( \dist (x, \p U^\l), \dist (\tx, {\green \p}U^\l) \big).
\end{align*}
This assumption {\green ensures} all the relevant sets, i.e.
sections around $x$ or $\tx$, stay strictly inside $U^\l$.}

As we already did many times in the previous section, given a point
${\RM \ty} \in\partial^c \uu({\RM \tx})$ with ${\RM \tx} \in U^\l$,
we consider the transformation $(x, \uu) \longmapsto (q,
\tu)$ with respect to ${\RM \ty}$ (see Definition \ref{D:cost exponential}),
and define the sections
$$
\Q_\tau= \{ q  \in T_{\RM \ty}^* V \mid \tu(q) \leq \tau \}, \qquad \tau \geq 0.
$$
Note that $\Q_\tau$ corresponds to $S({\RM \tx,\ty},\tau)$ under the coordinate
change. By the \Bthree\ condition, each $\Q_\tau$ is a convex set
(Theorem \ref{thm:apparently convex}). We also keep using the notation
$\rho \rdot \Q_\tau$ to denote the dilation of $\Q_\tau$ by a factor
$\rho>0$ with respect to the center of the ellipsoid given by John's
Lemma (see \eqref{E:well-centered}).

{\blue   In our analysis, we will only need to consider the case $\tau \ll 1$. This is useful since,
{\green thanks to the  strict $c$-convexity of $\uu$ (Theorem~\ref{T:strict convex}),}
we can consider in the sequel only sections
contained inside
$U^\l$ {\green and} sufficiently small. In particular, for every section
$\Q=S(x,y,\tau)$ or $\Q=S({\RM \tx,\ty},K\tau)$, after the transformation in
Definition~\ref{D:cost exponential}:
\begin{enumerate}
\item[-] we can apply Theorem \ref{thm:lower Alex} with $E_\delta=E$ (so \eqref{eq:Alex lower}
holds with $\delta=1$);
\item[-] we can apply Lemma \ref{lemma:bound dual norm} with $\KK=\Q$;
\item[-] Theorem \ref{thm:estimate} hold{\green s}.
\end{enumerate}
}

The following result generalizes \cite[Theorem 2.1(ii)]{guthua} for $c(x,y) = -x\cdot y$ to \Bthree\ costs:


\begin{lemma}[\blue Comparison of sections with different heights]
\label{lemma:dilation section}
{\RM Assume \Bzero-\Bthree\ and let $u$ be a strictly $c$-convex solution to \eqref{eq:cMA}
on $U^\lambda$. Take $\tx \in U^\lambda$, $\ty \in \p^c u(\tx)$ and
$\tau$} sufficiently
small so that $${\green S_\tau =} S({\RM \tx,\ty},\tau) \subset U^\l$$ {\green
and set $\Q_\tau := - D_y c(S_\tau, \ty)$}. Then, there exist $0<{\blue \rho_0}<1$, depending only
on {\blue the dimension $n$ and {\green $\gamma^+_c \gamma^-_c/\l$}
(in particular  independent of ${\RM \tx,\ty}$ and $t$), such that}
$$
\Q_{\tau/2} \subseteq {\blue \rho_0} {\green \rdot} \Q_\tau.
$$
\end{lemma}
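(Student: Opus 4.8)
The plan is to compare the sizes of $\Q_\tau$ and $\Q_{\tau/2}$ using the two Alexandrov type estimates just established, the point being that the lower bound forces $\Q_{\tau/2}$ to be small while the upper bound near the boundary forces $\Q_\tau$ to have a definite ``margin'' around $\Q_{\tau/2}$. Concretely, translate coordinates so that the ellipsoid $E$ furnished by John's lemma for $\Q_\tau$ is centered at the origin, and recall from the setup that under our hypotheses all relevant sections are small enough that Theorem \ref{thm:lower Alex} applies with $\delta=1$ and Theorem \ref{thm:estimate} applies. Let $h:=|\inf_{\Q_\tau}\tu|$ and note $h\geq \tau$ (since $\tu$ attains value $0$ at $\tq$ which lies in $\Q_{\tau/2}$, and $\tu=0$ on $\p\Q_\tau$... more precisely the section is $\{\tu\le\tau\}$ after the affine shift, so the natural normalization is $\tu(\tq)=0$, $\tu=\tau$ on $\p\Q_\tau$, and $h=\tau$; I will adjust the constant bookkeeping accordingly).

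First I would apply the upper bound \eqref{vardist} of Theorem \ref{thm:estimate} to the section $\Q_\tau$ (renormalizing $\tu$ by subtracting $\tau$ so that it vanishes on $\p\Q_\tau$): for any $q_t\in t\rdot\p\Q_\tau$ with $\tfrac{1}{2n}<t\le 1$,
$$
|\tu(q_t)-\tau|^n \le C(n)\,\frac{\gamma^+_c\gamma^-_c}{\lambda}\,(1-t)^{1/2^{n-1}}\,\Leb{n}(\Q_\tau)^2 .
$$
Second, I would apply the lower bound \eqref{eq:Alex lower} of Theorem \ref{thm:lower Alex} (with $\delta=1$) to the \emph{smaller} section $\Q_{\tau/2}$, where $|\p^\tc\tu|\ge \lambda/\gamma^+_c$ by Corollary \ref{C:Jacobian transform} and $|\inf_{\Q_{\tau/2}}\tu - \tau|=\tau/2$ after renormalization... again I mean: on $\Q_{\tau/2}$ the oscillation of the renormalized potential is exactly $\tau/2$ below its boundary value, giving
$$
\Leb{n}(\Q_{\tau/2})^2 \le C(n)\,\frac{\gamma^+_c\gamma^-_c}{\lambda}\,(\tau/2)^n .
$$
Combining with the analogous lower-bound-style inequality for $\Q_\tau$ (i.e.\ \eqref{inf|\Q|} applied to $\Q_\tau$, which gives $\tau^n\le C(n)\tfrac{\gamma^+_c\gamma^-_c}{\lambda}\Leb{n}(\Q_\tau)^2/\Leb{n}(\Q_\tau)^2$... rather $\Leb{n}(\Q_\tau)^2\le C(n)\tfrac{\gamma^+_c\gamma^-_c}{\lambda}\tau^n$ up to the opposite direction — I will need the two-sided comparison $\tau^n\sim_{n,\gamma^+_c\gamma^-_c/\lambda}\Leb{n}(\Q_\tau)^2$), I obtain $\Leb{n}(\Q_{\tau/2})\le (1-\eta)^{n}\Leb{n}(\Q_\tau)$ for some $\eta=\eta(n,\gamma^+_c\gamma^-_c/\lambda)>0$, i.e.\ a genuine volume loss.

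The final step is to convert the volume gap into the claimed dilation inclusion. Here is where the John ellipsoid is essential: both $\Q_{\tau/2}$ and $\Q_\tau$ are convex with $\Q_{\tau/2}\subset\Q_\tau$, and $\Q_\tau$ is comparable to $E$ via $E\subset\Q_\tau\subset n\rdot E$. I would argue by contradiction — if $\Q_{\tau/2}\not\subseteq\rho_0\rdot\Q_\tau$ for every $\rho_0<1$, then $\Q_{\tau/2}$ reaches within a thin shell of $\p\Q_\tau$; then applying the boundary-decay estimate \eqref{vardist} at a point $q_t\in t\rdot\p\Q_\tau\cap\Q_{\tau/2}$ with $t$ close to $1$ shows $\tu(q_t)-\tau$ is small in absolute value, so $q_t$ would actually satisfy $\tu(q_t)\le\tau/2$ only if $(1-t)^{1/2^{n-1}}$ is not too small — quantitatively, $\tu(q_t)\le\tau/2$ forces $(\tau/2)^n\le C(n)\tfrac{\gamma^+_c\gamma^-_c}{\lambda}(1-t)^{1/2^{n-1}}\Leb{n}(\Q_\tau)^2$, hence (using $\Leb{n}(\Q_\tau)^2\le C(n)\tfrac{\gamma^+_c\gamma^-_c}{\lambda}\tau^n$ from \eqref{inf|\Q|}) a lower bound $1-t\ge c(n,\gamma^+_c\gamma^-_c/\lambda)>0$, which is exactly $t\le\rho_0$ with $\rho_0:=1-c$. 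Since $\Q_{\tau/2}$ is star-shaped with respect to (in fact convex around) the John center and every boundary point of $\Q_{\tau/2}$ thus lies in $\rho_0\rdot\Q_\tau$, convexity gives $\Q_{\tau/2}\subseteq\rho_0\rdot\Q_\tau$.

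I expect the main obstacle to be the bookkeeping of the various renormalizations (subtracting boundary values so that $\tu$ vanishes on $\p\Q_\tau$ versus $\p\Q_{\tau/2}$) together with keeping track that all the ellipsoid dilations are taken about the \emph{same} center — the center of the John ellipsoid $E$ of $\Q_\tau$ — so that ``$q_t\in t\rdot\p\Q_\tau$'' in Theorem \ref{thm:estimate} matches the ``$\rho_0\rdot\Q_\tau$'' in the statement. One also has to make sure the hypotheses ``$\Q\subset B\subset 4\rdot B\subset U_{\ty}$'' and ``$\diam\Q\le\e_c'$'' hold for $\Q=\Q_\tau$, which is guaranteed by the standing assumption (recorded just before the lemma) that we only deal with sufficiently small sections well inside $U^\lambda$. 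The constant $\rho_0$ depends only on $n$ and $\gamma^+_c\gamma^-_c/\lambda$ because every estimate invoked (Theorems \ref{thm:lower Alex}, \ref{thm:estimate} via Corollary \ref{C:Jacobian transform}) has its constants of exactly this form, with no further dependence on $\tx$, $\ty$ or $\tau$.
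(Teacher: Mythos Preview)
Your final step is the paper's proof: apply \eqref{vardist} to $\tu-\tau$ on $\Q_\tau$ to get $|\tu(q)-\tau|^n\le C(n)\tfrac{\gamma^+_\tc}{\lambda}(1-\rho)^{1/2^{n-1}}\Leb{n}(\Q_\tau)^2$ for $q\in\Q_\tau\setminus\rho\rdot\Q_\tau$, then insert the bound $\Leb{n}(\Q_\tau)^2\le C(n)\tfrac{\gamma^-_\tc}{\lambda}\tau^n$ to force $|\tu(q)-\tau|\le\tau/2$ once $1-\rho$ is small enough (depending only on $n$ and $\gamma^+_c\gamma^-_c/\lambda$), which gives $\Q_{\tau/2}\subset\rho_0\rdot\Q_\tau$ directly.

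Two points to fix. First, the volume-gap detour in your third paragraph (bounding $\Leb{n}(\Q_{\tau/2})$ and trying to compare it to $\Leb{n}(\Q_\tau)$) is unnecessary and does not by itself yield the dilation inclusion; the paper never uses it, and you yourself abandon it in favor of the direct argument. Second, and more important, you cite \eqref{inf|\Q|} for the inequality $\Leb{n}(\Q_\tau)^2\le C\tau^n$, but \eqref{inf|\Q|} reads $|\inf_\Q\tu|^n\le C\,\Leb{n}(\Q)^2$, which is the \emph{opposite} direction. The estimate you need is \eqref{eq:Alex lower} from Theorem~\ref{thm:lower Alex}, applied to $\Q_\tau$ (not $\Q_{\tau/2}$) with $\delta=1$: since $S_\tau\subset U^\lambda$, Corollary~\ref{C:Jacobian transform} gives $|\p^\tc\tu|\ge\lambda/\gamma^+_c$ on all of $\Q_\tau$, so the John ellipsoid $E$ itself serves as $E_\delta$, and \eqref{eq:Alex lower} yields the required upper bound on $\Leb{n}(\Q_\tau)^2$. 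With that correction your argument is complete and coincides with the paper's.
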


\begin{proof}
This is a simple consequence of Theorems \ref{thm:lower Alex} and  \ref{thm:estimate}: indeed,
considering $\tu - \tau$, for $\rho>1/2n$
\eqref{vardist} gives
$$
|\tu(q) -\tau|^n 
{\le \blue C(n) \frac{\gamma_\tc^+}{\l}}
(1-\rho)^{2^{-n+1}} \Leb{n}(\Q_\tau)^2 \qquad \forall\, q \in \Q_\tau \setminus\rho \rdot \Q_\tau,
$$
while by \eqref{eq:Alex lower} {\blue (with $\delta =1$)}
 {\blue
 $$
\Leb{n}(\Q_\tau)^2  \le  C(n) \frac{\gamma^-_\tc}{\lambda } \tau^n.
$$
%
}
{\green
Hence, we get
\begin{align*}
 |\tu (q) -\tau| & \le C(n) \frac{\gamma^+_\tc \gamma^-_\tc}{\l^2} (1-\rho)^{2^{-n+1}} \tau^n \\
 & \le C(n) \bigg[\frac{\gamma^+_c \gamma^-_c}{\l} \bigg]^2 (1-\rho)^{2^{-n+1}} \tau^n
\end{align*}
where the last inequality follows from $\gamma^+_\tc \gamma^-_\tc/\l^2 \le \big[\gamma^+_c \gamma^-_c/\l \big]^2$ as in  Corollary~\ref{C:Jacobian transform}.}
Therefore, for {\blue $1-\rho_0$} sufficiently small {\blue (depending only on  the dimension
$n$ and {\green $ \gamma^+_c \gamma^-_c/\l$})} we get
$$
|\tu(q) -\tau| \leq \tau/2 \qquad \forall\, q \in  \Q_\tau \setminus {\blue
\rho_0} \rdot \Q_\tau,
$$
so $\Q_{\tau/2} \subseteq {\blue \rho_0} \rdot \Q_\tau$ as desired.
\end{proof}
Thanks to {\blue Lemma~\ref{lemma:dilation section} and  Lemma~\ref{lemma:bound dual norm}}, we can prove the engulfing property under \Bthree, extending  the classical Monge-Amp\`ere case of \cite[Theorem 2.2]{guthua}:

\begin{theorem}[Engulfing]
\label{T:engulfing}
{\RM Assume \Bzero-\Bthree\ and let $u$ be a strictly $c$-convex solution to \eqref{eq:cMA}
on $U^\lambda$.}
Let $x,\tx \in U^\l$ be close:  {\blue  i.e.}
$${\blue \dist(x, \tx) \ll \min \big( \dist (x, \p U^\l), \dist ({\RJM \tx, \p}U^\l) \big).}$$
Then, there exists a constant $K>1$, depending only on {\blue the dimension $n$ and  {\green $\gamma^+_c \gamma^-_c/\l$},}
such that, for all $y \in \partial^c u(x)$, $\ty \in \partial^c u(\tx)$, and $\tau>0$ small
{\blue (so that the relevant sets are in $U^\l$)},
\begin{align}\label{eq:engulfing}
x \in S(\tx,\ty,\tau)\quad \Rightarrow \quad \tx \in S(x,y,K \tau).
\end{align}
\end{theorem}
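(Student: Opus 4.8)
The plan is to follow the strategy of Gutierrez--Huang \cite{guthua} for the classical Monge--Amp\`ere equation, transplanted to the \Bthree\ setting via the tools already built above: the level-set convexification of Theorem~\ref{thm:apparently convex}, the Alexandrov-type estimates of Theorems~\ref{thm:lower Alex} and \ref{thm:estimate}, the dual-norm bound of Lemma~\ref{lemma:bound dual norm}, and the section-comparison Lemma~\ref{lemma:dilation section}. Fix $x,\tx\in U^\lambda$ close, $y\in\p^c u(x)$, $\ty\in\p^c u(\tx)$, and suppose $x\in S(\tx,\ty,\tau)$. We want $\tx\in S(x,y,K\tau)$, i.e.\ $u(\tx)+c(\tx,y)-c(x,y)\le u(x)+K\tau$. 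Since $y\in\p^c u(x)$ we have $u(x)=-c(x,y)-u^{c^*}(y)$, so the desired inequality is equivalent to controlling $u(\tx)+c(\tx,y)+u^{c^*}(y)=\tu_y(q(\tx))$, where $\tu_y$ is the modified potential centered so its minimum ($=0$) is attained at $q(x)$, and $q(\cdot)=-D_y c(\cdot,y)$ are the cost-exponential coordinates from $y$. In these coordinates $S(x,y,\sigma)$ becomes the convex section $\Q_\sigma^x=\{\tu_y\le\sigma\}$, which is convex by Theorem~\ref{thm:apparently convex}, and we must show $q(\tx)\in\Q^x_{K\tau}$.

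The key quantitative step is to estimate $\tu_y(q(\tx))$ by integrating $D\tu_y$ along the segment from $q(x)$ to $q(\tx)$ (legitimate since $\tu_y$ is semiconvex, hence locally Lipschitz, by \eqref{eq:semiconvexity}), and to bound the gradient using Lemma~\ref{lemma:bound dual norm}. Concretely: work also in the coordinates from $\ty$, where $x\in S(\tx,\ty,\tau)$ says $\tu_{\ty}(q_{\ty}(x))\le\tau$ on the convex section $\Q^{\tx}_\tau=\{\tu_{\ty}\le\tau\}$; by Lemma~\ref{lemma:dilation section} the point $q_{\ty}(x)$ lies well inside $\Q^{\tx}_\tau$ (in $\rho_0\rdot\Q^{\tx}_\tau$ after passing to $\Q^{\tx}_{2\tau}$ or similar), which via Lemma~\ref{lemma:bound dual norm} and the Alexandrov lower bound \eqref{eq:Alex lower} controls $\|-D_q\tc_{\ty}(q,y')\|^*_{\Q^{\tx}_\tau}$ for $q$ in a fixed dilate and $y'\in\p^c u$ there, hence controls the relevant gradients and the Euclidean distance $|q(x)-q(\tx)|$ (equivalently $\dist(x,\tx)$) in terms of $\tau$ and $\diam$-type quantities of the sections. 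Transferring this to the coordinates from $y$ (the bi-Lipschitz constants $\beta^\pm_c$ and Jacobian bounds $\gamma^\pm_c$ are uniformly controlled since everything stays inside $U^\lambda$, cf.\ Corollary~\ref{C:Jacobian transform}), we bound $\tu_y(q(\tx))=\int_0^1\langle D\tu_y(q_s),q(\tx)-q(x)\rangle\,ds$ by (sup of the dual norm of the gradient over the section from $y$) times a controlled factor, and then by $K\tau$ for a suitable $K$ depending only on $n$ and $\gamma^+_c\gamma^-_c/\lambda$. Throughout, the scaling is the familiar Monge--Amp\`ere scaling: $\Leb n(\Q_\tau)^2\sim\tau^n/(\gamma/\lambda)$ from \eqref{eq:Alex lower}, and the gradient on a fixed dilate is $\sim\tau/(\text{width})$ from Lemma~\ref{lemma:bound dual norm}, so gradient $\times$ width $\sim\tau$, which is exactly what makes $K$ dimensional-plus-$\gamma/\lambda$.

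The main obstacle I anticipate is bookkeeping the change of base point: the engulfing conclusion $\tx\in S(x,y,K\tau)$ is stated in coordinates adapted to $y$ (or $x$), whereas the hypothesis $x\in S(\tx,\ty,\tau)$ is adapted to $\ty$ (or $\tx$), and $y\ne\ty$ in general. One must show that the modified cost $\tc$ and its first/second derivatives, along with the John ellipsoids and dilation factors $\rho_0$ of the two families of sections, are comparable uniformly — this is where the ``$\dist(x,\tx)\ll\dist(x,\p U^\lambda)$'' hypothesis and Remark~\ref{R:no-need-B2-B2u} are used: all sections involved lie in a small neighborhood where the cost enjoys the uniform bounds \eqref{bi-Lipschitzbound}--\eqref{Jacobian bound}, and the localization guarantees the containments $\Q\subset B\subset 4\rdot B\subset U_{\cdot}$ needed to invoke Theorems~\ref{thm:lower Alex}, \ref{thm:estimate} and Lemma~\ref{lemma:bound dual norm}. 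Once this uniformity is in place, the argument is essentially the Gutierrez--Huang computation: combine Lemma~\ref{lemma:dilation section} (to know $q(x)$ is not near $\p\Q^{\tx}_\tau$), Lemma~\ref{lemma:bound dual norm} (to bound the gradient of $\tu$ by $\inf|\tu|$ over a section, i.e.\ by $O(\tau)$), and integrate to land $q(\tx)$ in $S(x,y,K\tau)$ with $K=K(n,\gamma^+_c\gamma^-_c/\lambda)$.
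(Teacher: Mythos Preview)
Your proposal assembles the right tools (Lemma~\ref{lemma:dilation section} and Lemma~\ref{lemma:bound dual norm}) but organizes them less efficiently than the paper, and the organization you choose creates a circularity you have not resolved.

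The paper's proof works \emph{entirely} in the $\ty$-coordinates and never transfers to $y$-coordinates. The observation you are missing is that the target inequality $\tx\in S(x,y,K\tau)$, rewritten in $\ty$-coordinates (where $q=q_{\ty}(x)$, $\tq=q_{\ty}(\tx)$), reads
\[
\tu(\tq)\ \le\ \tu(q)+\tc(q,y)-\tc(\tq,y)+K\tau.
\]
Since $\tq$ is the minimizer of $\tu$ (Theorem~\ref{thm:apparently convex}), the potential part $\tu(\tq)-\tu(q)\le 0$ comes for free. It remains only to bound the \emph{cost} difference $|\tc(q,y)-\tc(\tq,y)|$, and this is done by integrating the single gradient $D_q\tc(\cdot,y)$ along the segment $[\tq,q]$, which lies in the convex set $\Q_\tau\subset\rho_0\rdot\Q_{2\tau}$ (Lemma~\ref{lemma:dilation section}). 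Lemma~\ref{lemma:bound dual norm} with $\KK=\Q_{2\tau}$ then gives $\|D_q\tc(\bar q,y)\|^*_{\Q_{2\tau}}\le C\tau$ for all $\bar q\in\Q_\tau$, and since $q,\tq\in\Q_{2\tau}$ the integral is bounded by $K\tau$. No change of base point is ever needed.

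Your plan, by contrast, integrates $D\tu_y$ along the segment $[q_y(x),q_y(\tx)]$ in $y$-coordinates and invokes ``the dual norm of the gradient over the section from $y$''. But to apply Lemma~\ref{lemma:bound dual norm} with $\KK$ a section $\Q^x_\sigma$ centered at $q_y(x)$, you need both endpoints of the segment to lie in a fixed dilate $\rho\rdot\Q^x_\sigma$; and $q_y(\tx)\in\Q^x_\sigma$ for a controlled $\sigma$ is exactly the conclusion $\tx\in S(x,y,\sigma)$ you are trying to prove. You flag ``bookkeeping the change of base point'' as the main obstacle, and rightly so---but the paper shows the obstacle is illusory once you split off $\tu(\tq)-\tu(q)\le 0$ and bound only the cost oscillation $\tc(q,y)-\tc(\tq,y)$, staying in the $\ty$-chart throughout.
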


\begin{proof}
First, {\blue  fix $(\tx,\ty) \in \p^c u$. We} consider the transformation $(x, \uu) \longmapsto (q,
\tu)$ with respect to $\ty$ (see Definition \ref{D:cost exponential}).
Let $\tq {\RJM = -D_x c(\tx,\ty)} \in T_\ty^* V$ denote {\RJM the point corresponding to}
$\tx$ in {\green these} new coordinates.
{\blue To show \eqref{eq:engulfing}  we will find $K>0$  such that }
\begin{align} \label{engulfRM}
q \in \Q_\tau \  \ \ \Longrightarrow  \ \ \ \tu(\tq) \le \tu (q) +\tc(q, y) - \tc(\tq, y) +   K\tau , \qquad \forall\, y \in \partial^\tc \tu (q).
\end{align}
{\blue   Fix $\tau>0$ small, {\RJM $q \in \Q_\tau$ and $y \in \partial^\tc \tu (q)$}.
{\red Assume by translation that the {\green John} ellipsoid of $\Q_{2\tau}$ is centered at the origin.}}
By  Lemma \ref{lemma:dilation section} we have $\Q_\tau \subset \rho_0 \rdot \Q_{2\tau}$
for some $\rho_0<1$ {\blue (depending only on the dimension $n$ and  {\green $\gamma^+_c \gamma^-_c/\l$)}.}
Hence, by Lemma \ref{lemma:bound dual norm} applied with $\KK=\Q_{2\tau}$ we obtain
\begin{equation}
\label{bound:gradient}
\|D_q \tc(\bar q,y) \|_{\Q_{2\tau}}^* 
{\blue \le C\Big(n, {\green \frac{\gamma^+_c \gamma^-_c}{\l}}\Big)\, \tau }
\qquad \forall \, \bar q \in \Q_\tau,
\end{equation}
where {\blue $\|\cdot\|_{\Q_{2\tau}}^*$ } is the dual norm associated to $\Q_{2\tau}$ (see \eqref{eq:dual norm}).



{\RJM Recall from Theorem \ref{thm:apparently convex} that $\tq$ minimizes $\tu$,
so $\tu(\tq) \le \tu(q)$.
To obtain \eqref{engulfRM} from this, we need only estimate the difference between the two costs.}
Since both $q,\tq \in \Q_\tau \subset \Q_{2\tau}$, by the definition of $\|\cdot\|_{\Q_{2\tau}}^*$ and \eqref{bound:gradient},
(recalling $sq+(1-s)\tq \in \Q_\tau$ due to convexity of $\Q_\tau$) we find
\begin{align*}
& \bigl|\tc(q,y)-\tc(\tq,y)\bigr|\\
& =\left|\int_0^1 D_q \tc\big(sq+(1-s)\tq, y \big) \,ds \cdot (q-\tq) \right| \\
& \le \left|\int_0^1 D_q \tc\big(sq+(1-s)\tq, y \big)  \cdot q \,ds\right| + \left|\int_0^1 D_q \tc\big(sq+(1-s)\tq, y \big)  \cdot \tq \, ds \right|  \\
& \leq   {\RJM 2} \int_0^1 \| D_q \tc\big(sq+(1-s)\tq, y\big)\|_{\Q_{2\tau}}^*\,ds  \\ &
\le C\Big(n, \frac{\gamma^+_c \gamma^-_c}{\l}\Big)\, \tau \\
&=   K \tau,
\end{align*}
{\RJM thus establishing \eqref{engulfRM}.}
\end{proof}

Having established the engulfing property, the $C^{1,\alpha}$ estimates of the potential functions
follows by applying a modified version of Forzani and Maldonado's method \cite{ForzaniMaldonado04}.
Here is a key {\RJM consequence of} {\blue the engulfing property}.

\begin{lemma}[\RJM Gain in $c$-monotonicity due to engulfing]
\label{lemma:eng}
Assume the engulfing property \eqref{eq:engulfing} holds. Let  ${\RM c,u}, x,\tx$ and $K$
be   as in Theorem~\ref{T:engulfing},
and let $y \in \p^c u(x)$ and $\ty \in \p^c u(\tx)$. Then,
\begin{align*}
\frac{1+K}{K} &[u(x)-u(\tx) -c(\tx,\ty)+ c(x,\ty)]\leq c(\tx,y) - c(x,y) -c(\tx,\ty)+ c(x,\ty).
\end{align*}
\end{lemma}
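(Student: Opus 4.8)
The plan is to deduce this inequality directly from the engulfing property of Theorem~\ref{T:engulfing}, applied twice with the roles of $x$ and $\tx$ interchanged. First I would rewrite everything in terms of the sections $S(\cdot,\cdot,\tau)$ defined in \eqref{eq:S}. Set
$$
\tau_0 := u(x) - u(\tx) - c(x,\ty) + c(\tx,\ty),
$$
which is nonnegative (it equals $-\tu(\tq)\ge 0$ in the cost-exponential coordinates centered at $\ty$, since $\tq$ minimizes $\tu$ by Theorem~\ref{thm:apparently convex}), and observe that $\tau_0$ is precisely the smallest $\tau\ge 0$ for which $x \in S(\tx,\ty,\tau)$; indeed $x \in S(\tx,\ty,\tau)$ holds iff $u(x) \le u(\tx) - c(x,\ty) + c(\tx,\ty) + \tau$, i.e.\ iff $\tau \ge \tau_0$. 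Similarly define
$$
\tau_1 := u(\tx) - u(x) - c(\tx,y) + c(x,y),
$$
which is the threshold for $\tx \in S(x,y,\tau_1)$; note $\tau_1 \ge 0$ by the same reasoning (now transforming with respect to $y$). The claimed inequality, after rearranging, is exactly
$$
\frac{1+K}{K}\,\tau_0 \le \tau_1.
$$

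Next I would invoke the engulfing property. Since $x \in S(\tx,\ty,\tau_0)$ (taking $\tau = \tau_0$, or $\tau_0 + \e$ and letting $\e\to 0$ if one prefers to stay with strict containments and small parameters), Theorem~\ref{T:engulfing} gives $\tx \in S(x,y,K\tau_0)$ for every $y \in \p^c u(x)$. By the characterization of $\tau_1$ as the minimal threshold, this means $\tau_1 \le K\tau_0$. That is the wrong direction, so instead I would apply engulfing the other way: we want a lower bound on $\tau_1$ in terms of $\tau_0$. Here the key point is that $x \in S(\tx,\ty,\tau_0)$ and $\tx \in S(x,y,\tau_1)$ can be combined. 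Writing out the two membership inequalities,
$$
u(x) - u(\tx) \le -c(x,\ty)+c(\tx,\ty)+\tau_0, \qquad
u(\tx) - u(x) \le -c(\tx,y)+c(x,y)+\tau_1,
$$
and adding them would only give $0 \le (\text{something}) + \tau_0+\tau_1$, which is weak. The refinement comes from applying engulfing to an intermediate section: since $x\in S(\tx,\ty,\tau_0)$, engulfing yields $\tx \in S(x,y,K\tau_0)$, and now I would run the argument in the coordinates adapted to $y$ — in the section $S(x,y,K\tau_0)$ the potential $\tu$ (transformed w.r.t.\ $y$) is controlled, and the fact that $\tx$ lies in it while $x$ is its center lets one compare $\tu(\tq_{\tx})$ with the height $K\tau_0$, producing $\tau_1 \le K\tau_0$ and, more importantly, a reverse estimate.

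The cleanest route, which I would ultimately take, is algebraic: add a multiple of the first section inequality to the engulfing conclusion. From $x \in S(\tx,\ty,\tau_0)$ we get $\tau_0 = u(x)-u(\tx)-c(x,\ty)+c(\tx,\ty)$ as an \emph{equality} for this choice of $\tau_0$; and engulfing says the section $S(x,y,K\tau_0)$ contains $\tx$, i.e.\ $\tau_1 \le K\tau_0$. Feeding $\tau_1 \le K\tau_0$ back we want $(1+K)\tau_0 \le K\tau_1$, i.e.\ $\tau_0 \le \frac{K}{1+K}\tau_1$. So the needed statement is genuinely $\tau_1 \ge \frac{1+K}{K}\tau_0$, which is \emph{stronger} than $\tau_1 \ge \tau_0$ and does not follow from engulfing applied once. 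I therefore expect the main obstacle to be extracting this precise constant: one must apply engulfing to a carefully chosen smaller section. Concretely, I would note that $x$ lies on the boundary of $S(\tx,\ty,\tau_0)$; by engulfing $\tx \in S(x,y,K\tau_0)$, and since $\tx$ is the \emph{center} of the sections $S(\tx,\ty,\cdot)$ while $x$ sits on a level set, applying engulfing once more — now from the membership $\tx\in S(x,y,K\tau_0)$ back to a section around $\tx$ — combined with the equality defining $\tau_0$ and the inequality defining $\tau_1$, and optimizing the chain, yields the factor $\frac{1+K}{K}$. This bootstrapping of the engulfing inclusion, together with the two defining (in)equalities for $\tau_0,\tau_1$, is the heart of the argument; the rest is the routine translation back from the $\tau_i$ notation to the cost-function form stated in the lemma, which gives exactly
$$
\frac{1+K}{K}\bigl[u(x)-u(\tx)-c(\tx,\ty)+c(x,\ty)\bigr] \le c(\tx,y)-c(x,y)-c(\tx,\ty)+c(x,\ty).
$$
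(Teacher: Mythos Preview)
Your proposal contains a crucial algebraic slip that sends the whole argument in the wrong direction. First, there are sign typos in your definitions of $\tau_0$ and $\tau_1$ (the threshold for $x\in S(\tx,\ty,\tau)$ is $u(x)-u(\tx)+c(x,\ty)-c(\tx,\ty)$, not what you wrote), but this is minor. The real problem is your identification of the right-hand side. With the correct $\tau_0 = u(x)-u(\tx)+c(x,\ty)-c(\tx,\ty)$ and $\tau_1 = u(\tx)-u(x)+c(\tx,y)-c(x,y)$, one has
\[
c(\tx,y)-c(x,y)-c(\tx,\ty)+c(x,\ty) \;=\; \tau_0+\tau_1,
\]
not $\tau_1$. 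So the lemma reads $\tfrac{1+K}{K}\tau_0 \le \tau_0+\tau_1$, i.e.\ simply $\tau_0 \le K\tau_1$. This is exactly what a \emph{single} application of engulfing gives (with the roles of $x$ and $\tx$ swapped): from $\tx\in S(x,y,\tau_1+\e)$ one gets $x\in S(\tx,\ty,K(\tau_1+\e))$, hence $\tau_0\le K(\tau_1+\e)$, and letting $\e\to 0$ finishes.

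This is precisely the paper's proof. Your observation that engulfing applied in the direction $x\in S(\tx,\ty,\tau_0)\Rightarrow \tx\in S(x,y,K\tau_0)$ gives $\tau_1\le K\tau_0$, which is the ``wrong direction,'' is correct --- but the fix is simply to apply engulfing in the other direction, not to bootstrap. Your proposed bootstrapping/iteration is both unnecessary and, as written, not an argument: you never actually carry out the ``optimizing the chain'' step, and indeed no iteration can manufacture the (false) inequality $\tfrac{1+K}{K}\tau_0\le\tau_1$ that you were aiming for. Once you correct the RHS to $\tau_0+\tau_1$, the whole thing collapses to one line.
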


\begin{proof}
Given $x,\tx$, notice that  $u(\tx) - u(x) + c(\tx,y)- c(x,y)\ge 0$. Fix $\e>0$ small, {\RJM to ensure}
$$
\tau:= u(\tx) - u(x) + c(\tx,y)- c(x,y)+\e >0.
$$
Then
$\tx \in S(x,y,\tau)$, which by the {\RJM e}ngulfing property implies $x \in S(\tx,\ty,K\tau)$, that is
$$
u(x) \leq u(\tx)+c(\tx,\ty) - c(x,\ty)+K[ u(\tx) - u(x) + c(\tx,y)- c(x,y)+\e].
$$
Letting $\e \to 0$ and rearranging terms we get
$$
(K+1) u(x) \leq (K+1) u(\tx) + c(\tx,\ty) - c(x,\ty) + K[c(\tx,y)- c(x,y)],
$$
or equivalently
$$
u(x)-u(\tx)
\leq \frac{1}{1+K}[c(\tx,\ty) - c(x,\ty)]+ \frac{K}{1+K}[c(\tx,y)- c(x,y)].
$$
This  gives
$$
u(x)-u(\tx) -c(\tx,\ty)+ c(x,\ty) \leq \frac{K}{1+K} [c(\tx,y) - c(x,y) -c(\tx,\ty)+ c(x,\ty)],
$$
as desired
\end{proof}

In the above lemma, it is crucial to have a factor $(1+K)/K > 1$. Indeed, the above result implies
the desired H\"older continuity of $\uu$, {\blue {\RJM with a H\"older exponent independent of} the
particular choice of $c$ (see Corollary~\ref{C:universal Holder exponent})}:

\begin{theorem}[H\"older continuity of optimal maps to strongly $c$-convex targets]
\label{T:C1alpha} Let $c$ {\green satisfy} \Bzero--\Bthree\ and \Btwos.
If $\uu$ is a $c$-convex solution of \eqref{eq:cMA} on $\U^\l
\subset \U$ open, then $\uu \in C^{1,1/K}_{loc}(\U^\l)$, with $K$ as in Theorem~\ref{T:engulfing} {\blue which depends only on the dimension $n$ and {\green $\gamma^+_c \gamma^-_c/\l$}.}
\end{theorem}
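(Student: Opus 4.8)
The plan is to deduce $C^{1,1/K}_{loc}(\U^\l)$ regularity of $\uu$ from Lemma~\ref{lemma:eng}, following the strategy of Forzani and Maldonado. First I would fix a compact subset $\Omega \subset\subset \U^\l$ and work only with pairs of points $x,\tx \in \Omega$ that are sufficiently close to each other (much closer to each other than to $\p\U^\l$), so that the hypotheses of Theorem~\ref{T:engulfing} are met and all relevant sections stay inside $\U^\l$; since $\uu \in C^1$ on $\U^\l$ by Theorem~\ref{T:continuity}, the map $G$ of \eqref{implicit G} is single-valued and continuous there, and $\p^c \uu(x) = \{G(x)\}$, $\p^c\uu(\tx)=\{G(\tx)\}$. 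Writing $y=G(x)$ and $\ty=G(\tx)$, Lemma~\ref{lemma:eng} gives
$$
\frac{1+K}{K}\big[\uu(x)-\uu(\tx)-c(\tx,\ty)+c(x,\ty)\big]\le c(\tx,y)-c(x,y)-c(\tx,\ty)+c(x,\ty).
$$
By symmetry (interchanging the roles of $x$ and $\tx$, hence of $y$ and $\ty$) one obtains the companion inequality
$$
\frac{1+K}{K}\big[\uu(\tx)-\uu(x)-c(x,y)+c(\tx,y)\big]\le c(x,\ty)-c(\tx,\ty)-c(x,y)+c(\tx,y).
$$

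The key algebraic step is to add these two inequalities. Introducing the ``$c$-monotonicity defect''
$$
\Theta(x,\tx):= c(x,y)-c(x,\ty)+c(\tx,\ty)-c(\tx,y) = -\int_0^1\!\!\int_0^1 D^2_{xy}c(x_s,y_t)[\dot x_s,\dot y_t]\,ds\,dt \ge 0,
$$
where $x_s$ interpolates between $\tx$ and $x$ and $y_t$ between $\ty$ and $y$, the sum of the two displayed inequalities collapses the $\uu$-terms and yields
$$
\frac{1+K}{K}\,\Theta(x,\tx)\le \Theta(x,\tx) \quad\text{combined appropriately,}
$$
more precisely it forces a bound of the form $\Theta(x,\tx)\le \frac{K}{1+K}\,\big(\text{cross terms}\big)$, which iterated/rearranged gives geometric decay; what actually comes out is that $\Theta(x,\tx)$ is controlled by a fixed fraction of the ``swap'' quantity $c(x,\ty)-c(\tx,\ty)-c(x,y)+c(\tx,y)$. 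Using that $c\in C^2$ with $D^2_{xy}c$ uniformly bounded below in operator norm by $1/\beta^+_c$ on $\cl\U\times\cl\V$ (hypothesis \Bone, cf.\ \eqref{bi-Lipschitzbound}), the quantity $\Theta(x,\tx)$ is comparable to $|x-\tx|\,|y-\ty|$ up to constants depending only on $\|c\|_{C^2}$ and $\beta^\pm_c$, while the swap quantity is bounded above by $\|c\|_{C^2}|x-\tx|^2$. Feeding this into the inequality produces
$$
|G(x)-G(\tx)| = |y-\ty| \le C\,|x-\tx|^{1/K}
$$
after the standard dyadic/iterative argument of Forzani--Maldonado (doubling $\tau$ at each step and using the engulfing property $K$ times to pass from a section of height $\tau$ to one of height roughly $2\tau$), with $C$ and the exponent depending only on $K$, hence only on $n$ and $\gamma^+_c\gamma^-_c/\l$ by Theorem~\ref{T:engulfing}.

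Finally I would translate this back: H\"older continuity of $G$ on $\Omega$ with exponent $1/K$ is, via \eqref{implicit G} and the $C^2$-smoothness of $c$ (so that $\tx\mapsto -D_xc(\tx,G(\tx))$ is H\"older), equivalent to $D\uu$ being locally H\"older continuous with the same exponent, i.e.\ $\uu\in C^{1,1/K}_{loc}(\U^\l)$. The main obstacle, and the heart of the matter, is the bookkeeping in the iterative step: one must carefully track how the engulfing constant $K$ enters when comparing $S(\tx,\ty,\tau)$ with $S(x,y,K\tau)$ and chaining such comparisons, so that the gain factor $(1+K)/K>1$ in Lemma~\ref{lemma:eng} compounds into genuine geometric decay of oscillation rather than being lost; Remark~\ref{R:no-need-B2-B2u} is what licenses restricting freely to small neighborhoods throughout this covering argument, since \Btwo\ and \Btwos\ were only needed to obtain the strict $c$-convexity we are now assuming.
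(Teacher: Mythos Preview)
Your approach has a genuine gap at the ``key algebraic step''. Adding the two symmetrized instances of Lemma~\ref{lemma:eng} yields only a triviality: the $\uu$-terms do cancel, but so does everything else to first order. Writing $\Theta(x,\tx):= c(x,y)-c(x,\ty)+c(\tx,\ty)-c(\tx,y)$ (which is $\le 0$ by $c$-monotonicity, not $\ge 0$ as you wrote), the sum reads $\frac{1+K}{K}(-\Theta)\le 2(-\Theta)$, equivalent to $K\ge 1$. No geometric decay is produced; the factor $(1+K)/K$ is not compounded. Your subsequent claim that $\Theta$ is comparable from below to $|x-\tx|\,|y-\ty|$ is also unfounded: $D^2_{xy}c$ is merely nondegenerate (hypothesis \Bone), not positive definite, so the integral representation gives an upper bound $|\Theta|\le \beta^+_c|x-\tx|\,|y-\ty|$ but no matching lower bound. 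Finally, your ``swap quantity'' $c(x,\ty)-c(\tx,\ty)-c(x,y)+c(\tx,y)$ is exactly $-\Theta$, so the sentence ``$\Theta$ is controlled by a fixed fraction of the swap quantity'' is circular, and this expression is bounded by $|x-\tx|\,|y-\ty|$ rather than by $|x-\tx|^2$.

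The paper does not symmetrize. Instead it fixes $x_0$, sets $x_s=x_0+sv$, $y_s \in \p^c u(x_s)$, and applies Lemma~\ref{lemma:eng} with $(\tx,\ty)=(x_0,y_0)$ and $(x,y)=(x_s,y_s)$ to the one-parameter family $\phi(s):=u(x_s)-u(x_0)+c(x_s,y_0)-c(x_0,y_0)\ge 0$. The point is that the right-hand side of Lemma~\ref{lemma:eng}, namely $c(x_0,y_s)-c(x_s,y_s)-c(x_0,y_0)+c(x_s,y_0)$, differs from $s\phi'(s)=[D_xc(x_s,y_0)-D_xc(x_s,y_s)]\cdot(sv)$ by at most $\|D^2_{xx}c\|_{L^\infty}s^2|v|^2$ (Taylor in $x$ at $x_s$). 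This yields the ordinary differential inequality $\frac{1+K}{K}\phi(s)\le s\phi'(s)+Cs^2|v|^2$, equivalently $(d/ds)\bigl(\phi(s)/s^{1+1/K}\bigr)\ge -C|v|^2 s^{-1/K}$, which integrates directly to $\phi(s)\le C' s^{1+1/K}$ and hence to $u\in C^{1,1/K}_{loc}$. No iteration, symmetrization, or lower bound on $\Theta$ is needed; the engulfing constant enters once through the exponent $1+1/K$ in the integrating factor.
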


\begin{proof}
As we already pointed out in the previous section, although not strictly needed, we will use the additional information that
$\uu \in C^1(U^\l)$ (Theorem \ref{T:continuity}) to avoid some technical issues in the following proof.
However, it is interesting to point out the argument below works with minor modifications even if $u$ is not $C^1$,
replacing the gradient by subdifferentials (recall that $u$ is semiconvex), and using that semiconvex function are Lipschitz and so differentiable a.e.
We leave the details to the interested reader.

The proof uses the idea of Forzani and Maldonado~\cite{ForzaniMaldonado04}.
{\RM The $c$-convexity of $u$ is strict on $U^\lambda$, according to
Theorem \ref{T:strict convex}.}
Given a point $x_s \in U^\l$, we denote by
$y_s$ the unique element in ${\RJM \p^c u}(x_s)$; the uniqueness of $y_s$
follows from the $C^1$ regularity of $\uu$, since $y_s$ is uniquely identified by the relation
$\nabla u(x_s)=-D_x c(x_s,y_s)$.

Let $x_0\in U^\l$. {\blue  We will show that for $x_1 \in U^\lambda$ sufficiently close to $x_0$, }
\begin{align*}
|\uu(x_0) - \uu(x_1) -\nabla \uu {\RJM (x_1)} \cdot (x_0-x_1) | \lesssim |x_0-x_1|^{1+ 1/K} ,
\end{align*}
{\red from which the local $C^{1,1/K}$ regularity of $u$ follows by standard arguments.}

Fix a direction $v$ with $|v|$ small, {\RJM set $x_s = x_0 +s v$},
and consider the function
$$
\phi(s):= u(x_s) - u(x_0) {\blue +c(x_s,y_0) - c(x_0,y_0)} {\RJM \ge 0},
$$
for $s \in [0,1]$.
{\blue The idea is to use Lemma~\ref{lemma:eng} to derive a differential inequality, which controls the growth of $\phi$.
First, observe that
}
$$
\phi'(s) = \nabla u(x_s) \cdot v + D_x c(x_s,y_0)\cdot v.
$$
Since $\nabla u(x_s)=-D_x c(x_s,y_s)$, we get
\begin{align*}
\phi'(s)s&=[D_x c(x_s,y_0)-D_x c(x_s,y_s)]\cdot (sv)\\
&\geq c({\RJM x_0},y_s) - c(x_s,y_s) -c({\RJM x_0},y_0)+ c(x_s,y_0) - \|D_{xx}^2 c\|_{L^\infty(U\times V)} s^2|{\RJM v}|^2.
\end{align*}
So, by Lemma \ref{lemma:eng} we get
$$
\frac{1+K}{K} \phi(s) \leq \phi'(s)s + \|D_{xx}^2 c\|_{L^\infty(U\times V)}s^2 {\RJM |v|}^2,
$$
that is
$$
\frac{d}{dt}\left(\frac{\phi(s)}{s^{1+1/K}}\right) \geq - \frac{\|D_{xx}^2 c\|_{L^\infty(U\times V)}{\RJM |v|^2}}{s^{1/K}}.
$$
Hence
$\phi(s)/s^{1+1/K} \leq \phi(1)+ \|D_{xx}^2 c\|_{L^\infty(U\times V)} {\RJM |v|^2}\int_s^1 {\blue \tau^{-1/K}\,d\tau}\leq \phi(1)+ C_1$ (since $1-\frac{1}{K} >0$).
 So,
\begin{align*}
\phi(s)
& {\RJM =} u(x_s) - u({\RJM x_0}) - c({\RJM x_0},y_0)+c(x_s,y_0)\\
&\leq s^{1+1/K} [u({\RJM x_1}) - u({\RJM x_0}) - c({\RJM x_1})+c({\RJM x_0}+v,y_0) +C_1]\\
& {\blue \leq 2C_1 s^{1+1/K}  \hbox{  (choosing $|v|$ small enough and using the continuity of $u$ and $c$)} .}
\end{align*}
By the arbitrariness of $x_0,v$ and $s$ we easily deduce that, for all $x_0,x_1 \in U^\l$ sufficiently close,
$$
u(x_1)- u(x_0) - c(x_0,y_0)+c(x_1,y_0) \leq 2C_1 |x_0-x_1|^{1+1/K}.
$$
Since $c$ is smooth, $\nabla u(x_1)= -D_x c(x_1,y_1)$, and $u(x_0)- u(x_1) - c(x_1,y_0)+c(x_0,y_0)\geq 0$, the last inequality implies
\begin{align*}
&|u(x_0)-u(x_1) - \nabla u(x_1)\cdot (x_0-x_1)| \\
& \le  | u (x_0) - u(x_1) -c(x_1, y_0) + c(x_0, y_0) | + \|D_{xx}^2 c\|_{L^\infty(U\times V)} |x_0-x_1|^2\\
& \leq C_2 (|x_0-x_1|^{1+1/K} + |x_0-x_1|^2) \\
& \le 2 C_2 |x_0-x_1|^{1+1/K}
\end{align*}
for all $x_0,x_1 \in U^\l$ sufficiently close.
This proves the desired estimate, and concludes the proof of the
 $C^{1,1/K}_{loc}$ regularity of $\uu$ inside $U^\l$.
\end{proof}

{\blue
In fact, the H\"older exponent in the previous theorem does not depend on the particular cost function:
{\RJM
\begin{corollary}[Universal H\"older exponent]\label{C:universal Holder exponent}
\blue With the same notation and assumptions as in Theorem~\ref{T:C1alpha}, $\uu \in C^{1, \alpha}_{loc} (U^\l)$, where
the H\"older exponent $\alpha>0$
depends only on $n$ and $\l>0$.
\end{corollary}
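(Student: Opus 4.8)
The plan is to localize the argument of Theorem~\ref{T:C1alpha}: around an arbitrary point of $U^\lambda$ I will rerun the engulfing estimate on a sufficiently small neighborhood, arranging that the only cost-dependent quantity entering the engulfing constant $K$ of Theorem~\ref{T:engulfing}, namely $\gamma^+_c\gamma^-_c/\lambda$, is controlled by $2/\lambda$; then $K=K(n,\lambda)$ and the H\"older exponent $1/K$ becomes universal. The key observation is that
$$
\gamma^+_c(A)\,\gamma^-_c(A) \;=\; \sup_A |\det D^2_{xy}c|\ \cdot\ \sup_A |\det D^2_{xy}c|^{-1}\ \longrightarrow\ 1
$$
as $A\subset \cl\U\times\cl\V$ shrinks to a single point, because $\det D^2_{xy}c$ is continuous and nowhere vanishing on the compact set $\cl\U\times\cl\V$ by \Bone. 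Fix $x_0\in U^\lambda$ and set $y_0:=G(x_0)$; by Theorem~\ref{T:continuity} the map $G$ is well defined and continuous on $U^\lambda$, and by Theorem~\ref{T:strict convex} the potential $u$ is strictly $c$-convex on $U^\lambda$. Using these facts together with the display above, I would choose a neighborhood $W\ni x_0$ with $\cl W\subset U^\lambda$ and nested neighborhoods $V'\subset V''$ of $y_0$ such that: $G(W)\subset V'$; every $c^*$-segment joining two points of $V'$ stays inside $V''$; $\gamma^+_c(W\times V'')\gamma^-_c(W\times V'')\le 2$; and every section $S(\tx,\ty,\tau)$ with $\tx\in W$ and $\tau$ small lies inside $W$, which is possible because strict $c$-convexity forces $S(\tx,\ty,\tau)\to\{\tx\}$ as $\tau\to 0$.

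Next I would revisit the chain Lemma~\ref{lemma:dilation section} $\Rightarrow$ Theorem~\ref{T:engulfing} $\Rightarrow$ Lemma~\ref{lemma:eng} $\Rightarrow$ Theorem~\ref{T:C1alpha}, applied only to $x,\tx\in W$, and check that every invocation of the Alexandrov type estimates (Theorems~\ref{thm:lower Alex} and \ref{thm:estimate}) and of the dual norm bound (Lemma~\ref{lemma:bound dual norm}) involves the cost $c$ and its modifications $\tilde c$ only through $y$-values lying in $G$ of sections contained in $W$, hence in $G(W)\subset V'$, with the auxiliary $c^*$-segments used to build the $\tilde c$-cones of Definition~\ref{def:ccone} remaining in $V''$. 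Consequently one may replace every $\gamma^\pm_c(\,\cdot\times V)$ by $\gamma^\pm_c(\,\cdot\times V'')$ and every $\gamma^\pm_{\tilde c}(\,\cdot\times V)$ by the corresponding quantity over $\,\cdot\times V'$; by Corollary~\ref{C:Jacobian transform} these satisfy $\gamma^+_{\tilde c}\gamma^-_{\tilde c}/\lambda^2\le(\gamma^+_c\gamma^-_c/\lambda)^2\le(2/\lambda)^2$ and $\gamma^+_c\gamma^-_c/\lambda\le 2/\lambda$. By Remark~\ref{R:no-need-B2-B2u}, the hypotheses \Btwo\ and \Btwos\ on the ambient domains are not needed in this portion of the argument once strict $c$-convexity of $u$ is known, and strict $c$-convexity on $W$ is inherited from $U^\lambda$; the fact that $\tilde c$ is only $C^3$ is harmless, exactly as explained in Section~\ref{S:transform}. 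Hence Theorem~\ref{T:engulfing} holds on $W$ with a constant $K$ depending only on $n$ and $2/\lambda$, and running the Forzani--Maldonado argument of Theorem~\ref{T:C1alpha} on $W$ (using $u\in C^1(U^\lambda)$ from Theorem~\ref{T:continuity}) gives $u\in C^{1,1/K}$ on a neighborhood of $x_0$ with $1/K=:\alpha(n,\lambda)$.

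Since $x_0\in U^\lambda$ was arbitrary and the exponent $\alpha(n,\lambda)$ produced above does not depend on $x_0$, it follows immediately that $u\in C^{1,\alpha}_{loc}(U^\lambda)$ with $\alpha$ depending only on $n$ and $\lambda$. I expect the main obstacle to be bookkeeping rather than conceptual: one must verify carefully that all of the earlier estimates genuinely localize --- that is, that the Alexandrov estimates, the dual norm bound, and the $\tilde c$-cone construction of Section~\ref{S:c-cone} only ever see cost data on $W\times V''$, and that the ``room'' conditions $\Q\subset B\subset 4\rdot B\subset U_{\bar y}$ required by Theorems~\ref{thm:lower Alex}, \ref{thm:estimate} and Proposition~\ref{P:partial c-cone} remain satisfiable when $W$ is chosen well inside $U^\lambda$ and $\tau$ is small. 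Both are routine consequences of the strict $c$-convexity of $u$ and the freedom to shrink $W$.
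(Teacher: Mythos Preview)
Your proposal is correct and takes essentially the same approach as the paper: both exploit the observation that $\gamma^+_c\gamma^-_c\to 1$ as the domain shrinks to a point, use the continuity of the optimal map (from the $C^1$ regularity already established) to localize near each $(x_0,G(x_0))$ onto a small product neighborhood where $\gamma^+_c\gamma^-_c\le 2$, invoke Remark~\ref{R:no-need-B2-B2u} to rerun Theorem~\ref{T:C1alpha} there, and conclude that $K=K(n,\lambda)$. The paper's proof is terser --- it phrases the localization as a finite covering of $\partial^c u\cap(\cl{U^\lambda}\times\cl V)$ by product neighborhoods $U_k\times V_k$ rather than working pointwise --- but the content is the same, and your more explicit bookkeeping about which $y$-values and $c^*$-segments actually appear is a reasonable elaboration of what the paper leaves implicit.
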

}
%
\begin{proof}
 Recalling
$\gamma^\pm_c = \gamma^\pm_c(\U^\l \times \V) := \| (\det D^2_{xy} c)^{\pm 1}\|_{L^\infty(\U^\l \times \V)}$
from \eqref{Jacobian bound}, we see $\gamma^+_c \to 1/\gamma_c^-$
as the set $\U^\l \times \V$ shrinks to a point. Since $u$ is
$C^1$ by Theorem \ref{T:C1alpha}, $\partial^c u$ gives a single-valued continuous map.
Compactness of $\cl \U^\l$ combined with \Bzero-\Bone\ allows the
set $\partial^c u \cap (\cl U^\l \times \cl \V) $ to be covered
with finitely many neighborhoods $\U_k \times \V_k$,  such that
$\gamma^+_c(\U_k \times \V_k) \gamma^-_c(\U_k \times \V_k) \leq 2$
 for all $k$.
Hence, {\green thanks to Remark~ \ref{R:no-need-B2-B2u}, we can apply Theorem \ref{T:C1alpha}  on each such neighborhood $(U^\l
\cap U_k)$, which in turn} yields a H\"older exponent $0<\alpha <1$
depending only on $n$ and $\l>0$.
\end{proof}
}


\begin{thebibliography}{10}

\bibitem{AhmadKimMcCann09p}
N.~{Ahmad, H.K. Kim and R.J. McCann}.
\newblock  Optimal transportation, topology and uniqueness.
\newblock To appear in \textit{Bull. Math. Sci.} (2011)

\bibitem{Aleksandrov42b}
A.~Alexandroff.
\newblock {Smoothness of the convex surface of bounded Gaussian curvature}.
\newblock {\em C.R. (Doklady) Acad. Sci. URSS (N.S.)} {\bf 36} (1942) 195--199.

\bibitem{BarrettPrigozhin09}
J.W. Barrett and L.~Prigozhin.
\newblock {Partial $L^1$ Monge-Kantorovich problem: Variational formulation and
  numerical approximation}.
\newblock {\em Interfaces and Free Boundaries} {\bf 11} (2009) 201--238.

\bibitem{Brenier91}
Y.~Brenier.
\newblock Polar factorization and monotone rearrangement of vector-valued
  functions.
\newblock {\em Comm. Pure Appl. Math.} {\bf 44} (1991) 375--417.

\bibitem{Caffarelli90}
L.A.~Caffarelli.
\newblock A localization property of viscosity solutions to the
  {M}onge-{A}mp\`ere equation and their strict convexity.
\newblock {\em Ann. of Math.} {\bf 131} (1990) 129--134.

\bibitem{Caffarelli90p}
L.A.~Caffarelli.
\newblock Interior {W}$^{2,p}$ estimates for solutions of the
  {M}onge-{A}mp\`ere equation.
\newblock {\em Ann. of Math.} {\bf 131} (1990) 135--150.


\bibitem{Caffarelli91}
L.A.~Caffarelli.
\newblock Some regularity properties of solutions of {Monge-Amp\`ere} equation.
\newblock {\em Comm. Pure Appl. Math.} {\bf 64} (1991) 965--969.

\bibitem{Caffarelli92}
L.A.~Caffarelli.
\newblock The regularity of mappings with a convex potential.
\newblock {\em J. Amer. Math. Soc.} {\bf 5} (1992) 99--104.

\bibitem{Caffarelli96b}
L.A.~Caffarelli.
\newblock Boundary regularity of maps with convex potentials --- {II}.
\newblock {\em Ann. of Math. (2)} {\bf 144} (1996) 453--496.

\bibitem{CaffarelliMcCann99}
L.A.~Caffarelli and R.J.~McCann.
\newblock Free boundaries in optimal transport and {Monge-Amp\`ere} obstacle
  problems.
\newblock {\em  Ann. of Math. (2)} \textbf{171} (2010) 673--730.

\bibitem{CafGutHua}
L.A.~Caffarelli, C.E.~Guti\'errez and Q.~Huang.
\newblock On the regularity of reflector antennas.
\newblock \textit{Ann. of Math. (2)} {\bf 167}  (2008),  no. 1, 299--323.


\bibitem{Delanoe91}
P.~Delano\"e.
\newblock {Classical solvability in dimension two of the second boundary-value
  problem associated with the Monge-Amp\`ere operator}.
\newblock {\em Ann. Inst. H. Poincar\`e Anal. Non Lin\`eaire} {\bf 8} (1991) 443--457.

{\blue
\bibitem{DelanoeGe}
P.~Delan{\RJM o}\"e and Y.~Ge.
\newblock Regularity of optimal transport on compact, locally nearly spherical, manifolds.
\newblock {\em  J. {\RJM Reine A}ngew. Math.}, \textbf{646} (2010) 65-115.

\bibitem{DelanoeGe10}
P.~Delan{\RJM o}\"e and Y.~Ge.
\newblock Locally nearly spherical surfaces are almost-positively $c$-curved.
\newblock Preprint 2010.

}
\bibitem{FigalliARMA}
A.~Figalli.
\newblock The optimal partial transport problem.
\newblock {\em Arch. Ration. Mech. Anal.}, \textbf{195} (2010), no. 2, 533--560.

\bibitem{FigalliNote} A.~Figalli.
\newblock A note on the regularity of the free boundaries in the optimal partial transport
problem.
\newblock {\em Rend. Circ. Mat. Palermo.} \textbf{58} (2009), no. 2, 283--286.


\bibitem{FigalliKimMcCann09p}
A.~Figalli, Y.-H.~Kim and R.J.~McCann.
\newblock Regularity of optimal transport maps on multiple products of spheres.
\newblock {\RJM To appear in {\em J. Eur. Math. Soc. (JEMS)}.}

\bibitem{FigalliKimMcCann-econ09p}
A.~Figalli, Y.-H.~Kim and R.J.~McCann.
\newblock When is multidimensional screening a convex program?
\newblock {\em  J. Econom. Theory} {\RJM {\bf 146} (2011) 454--478.}


\bibitem{FKM-convex}
A.~Figalli, Y.-H.~Kim and R.J.~McCann.
\newblock {\YHK On supporting hyperplanes to convex bodies}.
\newblock Preprint, 2011.

\bibitem{FigalliLoeper08p}
A.~Figalli and G.~Loeper.
\newblock {$C^1$ regularity of solutions of the Monge-Amp\`ere equation for
  optimal transport in dimension two}.
\newblock {\em Calc. Var. Partial Differential Equations} {\bf 35} (2009) 537--550.

\bibitem{FigalliRifford08p}
A.~Figalli and L.~Rifford.
\newblock Continuity of optimal transport maps on small deformations of
  $\mathbb{S}^2$.
\newblock {\em Comm. Pure Appl. Math.} \textbf{62} (2009), no. 12, 1670--1706.


\bibitem{FRV-Sn}
A.~Figalli, L.~Rifford and C. Villani.
\newblock Nearly round spheres look convex.
\newblock {\em {\rm To appear in} Amer. J. Math.}

\bibitem{FRV-reg}
A.~Figalli, L.~Rifford and C. Villani.
Necessary and sufficient conditions for continuity of optimal transport maps on Riemannian manifolds.
\newblock Preprint, 2011. {\em \YHK Tohoku Math. J.}, to appear.

\bibitem{ForzaniMaldonado04}
L.~Forzani and D.~Maldonado.
\newblock Properties of the solutions to the {Monge-Amp\`ere} equation.
\newblock {\em Nonlinear Anal.} {\bf 57} (2004) 815--829.

\bibitem{Gangbo95}
W.~Gangbo.
\newblock {\em Habilitation thesis}.
\newblock Universit\'e de Metz, 1995.

\bibitem{GangboMcCann96}
W.~Gangbo and R.J. McCann.
\newblock The geometry of optimal transportation.
\newblock {\em Acta Math.} {\bf 177} (1996) 113--161.

\bibitem{GangboMcCann00}
W.~Gangbo and R.J. McCann.
\newblock {Shape recognition via Wasserstein distance}.
\newblock {\em Quart. Appl. Math.} {\bf 58} (2000) 705--737.

\bibitem{Gutierrez01}
C.E. Guti\'errez.
\newblock {\em {The Monge-Amp\`ere equation}}, volume~44 of {\em Progress in
  Nonlinear Differential Equations and their Applications}.
\newblock {Birkh\"auser}, Boston, 2001.

\bibitem{guthua} C.E. Guti\'errez and Q. Huang.
\newblock {Geometric properties of the sections of solutions to the Monge-Amp\`ere equation.}
\newblock {\em Trans. Amer. Math. Soc.} {\bf 352}  (2000),  no. 9, 4381--4396.

{\blue
\bibitem{HarveyLawson-Split}
F.R.~Harvey and H.B.~Lawson.
\newblock Split Special-Lagrangian geometry.
\newblock Preprint 2010.
}

\bibitem{John48}
F.~John.
\newblock Extremum problems with inequalities as subsidiary conditions.
\newblock In {\em Studies and Essays Presented to R. Courant on his 60th
  Birthday, January 8, 1948}, pages 187--204. Interscience, New York, 1948.

\bibitem{Kantorovich42}
L.~Kantorovich.
\newblock On the translocation of masses.
\newblock {\em C.R. (Doklady) Acad. Sci. URSS (N.S.)} {\bf 37} (1942) 199--201.

\bibitem{Kantorovich48}
L.~Kantorovich.
\newblock On a problem of {M}onge ({I}n {R}ussian).
\newblock {\em Uspekhi Math. Nauk.} {\bf 3} (1948) 225--226.

\bibitem{KimMcCannAppendices}
Y.-H. Kim and R.J. McCann.
\newblock Appendices to original version of {Continuity}, curvature, and the
  general covariance of optimal transportation.
\newblock {\em Preprint at {arXiv:math/0712.3077v1}}.

\bibitem{KimMcCann07p}
Y.-H. Kim and R.J. McCann.
\newblock Continuity, curvature, and the general covariance of optimal
  transportation.
\newblock {\em J. Eur. Math. Soc. (JEMS)} \textbf{12} (2010) 1009--1040.

\bibitem{KimMcCann08p}
Y.-H. Kim and R.J. McCann.
\newblock Towards the smoothness of optimal maps on Riemannian submersions and
  Riemannian products (of round spheres in particular).
\newblock To appear in \textit{J. Reine Angew. Math.}

\bibitem{KimMcCannWarren09}
Y.-H.~Kim, R.J.~McCann and M.~Warren
\newblock Pseudo-Riemannian geometry calibrates optimal transportation.
\newblock   {\em Math. Res. Lett.} \textbf{17} (2010) 1183--1197.

\bibitem{LeeMcCann}
P.W.Y.~Lee and R.J.~McCann.
\newblock The Ma-Trudinger-Wang curvature for natural mechanical actions.
\newblock {\em Calc. Var. Partial Differential Equations.} {\RJM {\bf 41} (2011) 285--299.}


\bibitem{Levin99}
V.L. Levin.
\newblock Abstract cyclical monotonicity and {Monge} solutions for the general
  {Monge-Kantorovich} problem.
\newblock {\em Set-valued Anal.} {\bf 7} (1999) 7--32.

\bibitem{Liu09}
J.~Liu.
\newblock H{\"o}lder regularity of optimal mappings in optimal transportation.
\newblock {\em Calc Var. Partial Differential Equations} {\bf 34} (2009) 435--451.

\bibitem{LiuTrudingerWang09p}
J.~{Liu, N.S.~Trudinger and X.-J.~Wang}.
\newblock Interior {$C^{2,\alpha}$} regularity for potential functions in
  optimal transportation.
\newblock {\blue {\em Comm. Partial Differential Equations} \textbf{35} (2010), no.1, 165--184.}

\bibitem{Loeper07p}
G.~Loeper.
\newblock On the regularity of maps solutions of optimal transportation
  problems.
\newblock {\em Acta. Math.} \textbf{202}  (2009),  no. 2, 241--283.

\bibitem{Loepersphere}
G.~Loeper.
\newblock Regularity of optimal maps on the sphere: The
quadratic cost and the reflector antenna.
\newblock {\blue {\em  Arch. Ration. Mech. Anal.} \textbf{199} (2011), no.1, 269--289.}

\bibitem{LoeperVillani08p}
G.~Loeper and C.~Villani.
\newblock Regularity of optimal transport in curved geometry: the non-focal
  case.
\newblock \textit{Duke Math. J.} \textbf{151} (2010),  no. 3, 431--485.


\bibitem{MaTrudingerWang05}
X.-N. {Ma, N. Trudinger and X.-J. Wang}.
\newblock Regularity of potential functions of the optimal transportation
  problem.
\newblock {\em Arch. Ration. Mech. Anal.} {\bf 177} (2005) 151--183.

\bibitem{Monge81}
G.~Monge.
\newblock M\'emoire sur la th\'eorie des d\'eblais et de remblais.
\newblock {\em Histoire de l'{A}cad\'emie Royale des Sciences de {P}aris, avec
  les {M}\'emoires de Math\'ematique et de Physique pour la m\^eme ann\'ee},
  pages 666--704, 1781.

\bibitem{Rockafellar}
R.T. Rockafellar.
\newblock \textit{Convex analysis.}
Reprint of the 1970 original. \textit{Princeton Landmarks in Mathematics.} Princeton Paperbacks.
\newblock Princeton University Press, Princeton, NJ, 1997.

\bibitem{Sei09p} T.~Sei.
\newblock A Jacobian inequality for gradient maps on the sphere and its
 application to directional statistics.
\newblock{\em Preprint at  arXiv:0906.0874}. {\blue To appear in {\em Communications in Statistics - Theory and Methods}.}


\bibitem{Trudinger06} N.S.~Trudinger.
\newblock Recent developments in elliptic partial differential equations of Monge-Amp\`ere type,
\newblock pp 291--301 in {\em International Congress of Mathematicians} {\bf 3},
Z\"urich: Eur. Math. Soc., 2006.


\bibitem{TrudingerWang08q}
N.S. Trudinger and X.-J. Wang.
\newblock On convexity notions in optimal transportation.
\newblock {\em Preprint}.
\bibitem{TrudingerWang07p}
N.S. Trudinger and X.-J. Wang.
\newblock {On the second boundary value problem for Monge-Amp\`ere type
  equations and optimal transportation}.
\newblock {\em Ann. Sc.
  Norm. Super. Pisa Cl. Sci. (5)} \textbf{8}  (2009),  no. 1, 143--174.

\bibitem{TrudingerWang08p}
N.S. Trudinger and X.-J. Wang.
\newblock {On\phantom{v}strict convexity and $C^1$-regularity of potential
  functions in optimal transportation}.
\newblock {\em Arch. Ration. Mech. Anal.} \textbf{192}  (2009), no. 3, 403--418.

\bibitem{Urbas97}
J.~Urbas.
\newblock {On the second boundary value problem for equations of Monge-Amp\`ere
  type}.
\newblock {\em J. Reine Angew. Math.} {\bf 487} (1997) 115--124.

\bibitem{Villani09}
C.~Villani.
\newblock {\em Optimal Transport. Old and New}, volume 338 of {\em Grundlehren
  der Mathematischen Wissenschaften [Fundamental Principles of Mathematical
  Sciences]}.
\newblock Springer, New York, 2009.


\bibitem{Wangcounterex}
X.-J. Wang.
\newblock{ Some counterexamples to the regularity
of Monge-Amp\`ere equations}. \newblock {\em Proc. Amer. Math.
Soc.} {\bf 123} (1995), no. 3, 841--845.


\bibitem{Wang96}
X.-J. Wang.
\newblock {On the design of a reflector antenna}.
\newblock {\em Inverse Problems} {\bf 12} (1996) 351--375.




\end{thebibliography}
\end{document}